

\documentclass[12pt]{amsart}
\usepackage{amssymb,latexsym,comment,url}
\usepackage{amsrefs}
\usepackage[all]{xy}
\usepackage{color}
\usepackage{mathrsfs}
\usepackage[hidelinks, colorlinks=true, linkcolor=blue, citecolor=magenta]{hyperref}

\newcommand{\wB}{B} 

\newcommand{\C}{{\mathbb C}}

\newcommand{\wA}{A} 
\newcommand{\Br}{{\operatorname{Br}}}
\newcommand{\D}{{\mathcal{D}}}
\newcommand{\ASL}{\operatorname{ASL}}

\newcommand{\Char}{\operatorname{char}}

\newcommand{\Gal}{\operatorname{Gal}}
\newcommand{\NS}{\operatorname{NS}}
\newcommand{\Hom}{\operatorname{Hom}}

\newcommand{\Pic}{\operatorname{Pic}}

\newcommand{\Lbar}{\overline{L}}

\newcommand{\G}{{\mathcal G}}

\newcommand{\GL}{\operatorname{GL}}

\newcommand{\isom}{{\, \cong \,}}
\newcommand{\la}{\lambda}

\newcommand{\K}{k}
\newcommand{\Kbar}{\K^\sep}

\newcommand{\Map}{\operatorname{Map}}

\newcommand{\PP}{{\mathbb P}}
\newcommand{\Q}{{\mathbb Q}}

\newcommand{\rank}{\operatorname{rank}}
\newcommand{\ra}{{\longrightarrow}}

\newcommand{\sep}{\mathrm{sep}}
\newcommand{\SL}{\operatorname{SL}}

\newcommand{\tr}{{\text{\rm tr}}}

\newcommand{\LL}{{\mathcal{L}}}

\newcommand{\x}{{\bf x}}

\newcommand{\Z}{{\mathbb Z}}
\newcommand{\Aut}{\operatorname{Aut}}
\newcommand{\adj}{\operatorname{adj}}
\newcommand{\ob}{\operatorname{Ob}}
\newcommand{\Gm}{\mathbb{G}_{\text{\rm m}}}
\DeclareFontEncoding{OT2}{}{} 
\newcommand{\textcyr}[1]{%
 {\fontencoding{OT2}\fontfamily{wncyr}\fontseries{m}\fontshape{n}\selectfont #1}}
\newcommand{\Sha}{{\mbox{\textcyr{Sh}}}}


\newtheorem{Proposition}{Proposition}[section]
\newtheorem{Theorem}[Proposition]{Theorem}
\newtheorem{Lemma}[Proposition]{Lemma}
\newtheorem{Corollary}[Proposition]{Corollary}

\theoremstyle{definition}
\newtheorem{Definition}[Proposition]{Definition}
\newtheorem{Remark}[Proposition]{Remark}

\newtheorem{Example}[Proposition]{Example}

\DeclareMathOperator{\End}{End}
\addtolength{\hoffset}{-1cm}
\addtolength{\textwidth}{2cm}

\begin{document}

\title[Visibility of 4-covers of elliptic curves]%
{Visibility of 4-covers of elliptic curves}
\author{Nils~Bruin}
\address{Department of Mathematics, Simon Fraser University, Burnaby, BC, V5A 1S6, Canada}
\email{nbruin@sfu.ca}

\author{Tom~Fisher}
\address{University of Cambridge,
         DPMMS, Centre for Mathematical Sciences,
         Wilberforce Road, Cambridge CB3 0WB, UK}
\email{T.A.Fisher@dpmms.cam.ac.uk}

\keywords{elliptic curves, Tate-Shafarevich groups, Mazur visibility, descent, K3 surfaces, local-global obstructions}
\subjclass[2010]{11G05, 11G35, 14H10}
\date{23rd January 2017}  

\begin{abstract}
  Let $C$ be a $4$-cover of an elliptic curve $E$, written as a
  quadric intersection in $\PP^3$. Let $E'$ be another elliptic curve
  with $4$-torsion isomorphic to that of $E$. We show how to write
  down the $4$-cover $C'$ of $E'$ with the property that $C$ and $C'$
  are represented by the same cohomology class on the $4$-torsion.  In
  fact we give equations for $C'$ as a curve of degree $8$ in $\PP^5$.

  We also study the K3-surfaces fibred by the curves $C'$ as we vary
  $E'$.  In particular we show how to write down models for these
  surfaces as complete intersections of quadrics in $\PP^5$ with
  exactly $16$ singular points.  This allows us to give examples of
  elliptic curves over $\Q$ that have elements of order $4$ in their
  Tate-Shafarevich group that are not visible in a principally
  polarized abelian surface.
\end{abstract}

\maketitle

\renewcommand{\theenumi}{\roman{enumi}}

\section{Introduction}
\label{intro}
Let $E$ and $E'$ be elliptic curves over a field $k$ that are
\emph{$n$-congruent}, meaning that there is an isomorphism of $k$-group
schemes $\sigma\colon E[n]\to E'[n]$. We suppose that the
characteristic of $k$ does not divide $n$. We may use $\sigma$ to
transfer certain interesting arithmetic information between $E$ and
$E'$. For instance let $k$ be a number field. An $n$-torsion element
of the \emph{Tate-Shafarevich} group $\Sha(E/k)$ may be represented by
a class $\xi\in H^1(k,E[n])$.  Let $\sigma_* : H^1(k,E[n]) \to
H^1(k,E'[n])$ be the isomorphism induced by $\sigma$. It might happen
that while $\xi$ maps to a non-trivial element in $\Sha(E/k)[n]$,
represented say by a curve $C$, the image of $\sigma_*(\xi)$ in $
H^1(k,E')$, represented say by a curve $C'$, could be trivial. This is
an example of Mazur's concept of \emph{visibility} (see
\cites{CM,Mazur-cubics}): the graph $\Delta\subset E[n]\times E'[n]$
of the $n$-congruence $\sigma$ provides an isogeny $E\times E'\to
(E\times E')/\Delta$ and a model for $C$ arises as the fibre of
$(E\times E')/\Delta$ over a point in $E'(k)$ that bears witness to
the triviality of $C'$.

The case $n=2$ is relatively special, because quadratic twists have
isomorphic $2$-torsion. It is true, however, that given any $\xi\in
H^1(k,E[2])$, one can find another elliptic curve $E'$ with isomorphic
$2$-torsion such that $\sigma_*(\xi) \in H^1(k,E'[2])$ represents a
trivial homogeneous space under $E'$; see \cites{bruin:sha2,
  Klenke2005}.

In order to determine if a given class $\xi\in H^1(k,E[n])$ can be
made visible using an $n$-congruence, one can proceed in three
steps. We assume $n>2$.
\begin{enumerate}
\item One parametrizes the elliptic curves $n$-congruent to $E$. This
  amounts to determining an appropriate twist $X_E(n)$ of the modular
  curve
  of full level $n$. (We make no reference to the Weil
  pairing in the definition of
  $X_E(n)$, so geometrically
  this curve has $\phi(n)$ components, where $\phi$ is Euler's
  totient function.)
\item If $n>2$ then $X_E(n)$ is a fine moduli space and there is a
  universal elliptic curve $E_t$ over $X_E(n)$.  One constructs a
  fibred surface $S_{E,\xi}(n) \to X_E(n)$ whose fibres are the
  $n$-covers of $E_t$
  corresponding to $\xi \in H^1(k,E_t[n])$.
\item If one can find a rational point $P$ on $S_{E,\xi}(n)$, and none
  of the cusps of $X_E(n)$ are rational points,
  then $\xi$ can be made visible by taking $E'$ to be the elliptic
  curve corresponding to the moduli point on $X_E(n)$ below $P$.  On
  the other hand, if
  $S_{E,\xi}(n)$ has no rational points then $\xi$ cannot be made
  visible using an elliptic curve $n$-congruent to $E$.
\end{enumerate}

One can classify the $n$-congruence $\sigma$ by the effect it has on
the Weil pairing. If $n=3,4$, it can either preserve or invert
it. Correspondingly, the modular curve $X_E(n)$ has two components
$X^+_E(n)$ and $X^-_E(n)$. Note that there is a tautological point on
$X^+_E(n)$ corresponding to $E$ itself, and the fibre of
$S_{E,\xi}^+(n) \to X^+_E(n)$ above this point represents the image of
$\xi$ in $H^1(k,E)$. If this image lies in $\Sha(E/k)$ then the fibre,
and hence also $S_{E,\xi}^+(n)$ itself, has points everywhere locally.
Mazur uses this in \cite{Mazur-cubics} to show that any element of
$\Sha(E/k)$ of order $3$ can be made visible using an elliptic curve
$3$-congruent to $E$. Indeed he shows that $S^+_{E,\xi}(3)$ is a
blow-up of a twist of $\PP^2$, and hence satisfies the local-to-global
principle.

From a computational point of view, it is attractive if $(E\times
E')/\Delta$ can be realized as a Jacobian, or more generally, admits a
principal polarization. It naturally does so if we start with $\sigma$
inverting the Weil pairing. Again taking $n=3$, one can show that
$S^+_{E,\xi}(3)$ is birational to $\PP^2$ over $k$ if and only if the
same is true for $S^-_{E,\xi}(3)$. It follows (see
\cite{BruinDahmen2010}) that any element of $\Sha(E/k)$ of order $3$
is visible in the Jacobian of a genus $2$ curve.

For larger $n$ there are major obstacles to this kind of visibility
over number fields. Once $n\geq 6$ the components of $X_E(n)$ have positive
genus, and so rational points are rare: the set of candidate elliptic
curves $E'$ is sparse for $n=6$ and finite for $n\geq 7$. See
\cite{fisher:invis}  for explicit examples over $\Q$ of non-existence
of such $E'$ for $n=6,7$.

In this article we consider the case $n=4$. This case is particularly
interesting for several reasons:
\begin{enumerate}
\item The curve $X_E^+(4)$ is of genus $0$, but the surface
  $S^+_{E,\xi}(4)$ is a K3-surface. Much is conjectured, but little is
  known about the rational points on K3-surfaces.
\item Given $\xi \in H^1(k,E[n])$, representing a genus $1$ curve of
  degree $n$, there is another fibred surface $T_{E,\xi}(n) \to
  X_E(n)$ whose fibres are $n$-covers of $E_t$, but now sharing the
  same action of $E[n]$ on a suitable linear system.  For $n=3,4,5$,
  explicit invariant-theoretic constructions of these surfaces are
  given in \cite{g1hessians}, \cite{enqI}. When $n$ is odd the
  surfaces $S^\pm_{E,\xi}(n)$ and $T^\pm_{E,\xi}(n)$ are the same, but when
  $n$ is even a correction to this idea is needed.
\end{enumerate}

Taking $n=4$, we may identify $X_E^{\pm}(4) \isom \PP^1$.  We start
with $D = \{ Q_1 = Q_2 = 0 \} \subset \PP^3$ a quadric intersection
representing $\xi \in H^1(k,E[4])$. The invariant theory in
\cite{g1hessians} allows us to write down
$T^\pm_{E,\xi}(4) \to \PP^1$ as a family of quadric intersections in
$\PP^3$. However the fibres of $S^\pm_{E,\xi}(4) \to \PP^1$ cannot
always be written as quadric intersections. In this article we show
how to write down a singular model for $S^\pm_{E,\xi}(4)$ as a
complete intersection of quadrics in $\PP^5$. On this model, the
non-singular fibres
are genus $1$ curves of degree $8$.

In fact the surfaces $S^\pm_{E,\xi}(4)$ and $T^\pm_{E,\xi}(4)$ are
twists of surfaces $S(4)$ and $T(4)$ that may be defined as
follows. 
Let $X(n)$ be the modular curve whose non-cuspidal points
parametrise elliptic curves $E$ together with a symplectic isomorphism
$E[n] \isom \Z/n\Z \times \mu_n$. We write
\[E_t\colon\quad y^2=x(x-1)(x-\frac{(1-t^2)^2}{(1+t^2)^2})\] for the
universal elliptic curve over $X(4) \isom \PP^1$. The {\em Shioda
  modular surface} of level $4$ is the minimal fibred surface $S(4)
\to X(4)$ with generic fibre $E_t$. The {\em theta modular surface} of
level $4$ is the minimal fibred surface $T(4) \to X(4)$ with generic
fibre
\[
  D_t\colon\quad\left\{
\begin{aligned}
t(x_0^2 + x_2^2) + 2 x_1 x_3 &= 0 \\
t(x_1^2 + x_3^2) + 2 x_0 x_2 &= 0
\end{aligned} \right\} \subset \PP^3. 
\]
The relation between the fibres is that $D_t$ has Jacobian $E_t$.

We refer to \cite{BH} for many interesting facts about the geometry of
the surfaces $S(4)$ and $T(4)$. For example, they are both K3-surfaces
(in fact Kummer surfaces) with Picard number 20. Working over $\C$,
the surface $T(4)$ is isomorphic to the diagonal quartic surface in
$\PP^3$.  Moreover the surfaces $S(4)$ and $T(4)$ are related by
generically $2$-to-$1$ rational maps (in either direction) but are not
birational.

\medskip

\subsection{Outline of the article}

In Section~\ref{S:preliminaries} we review some of the interpretations
of $H^1(k,E[n])$, most notably in terms of $n$-covers and theta
groups. We also define the (twisted) Shioda and theta modular
surfaces.

In Section~\ref{sec:4cover} we look at methods for computing
$4$-covers of elliptic curves.  A central notion is that of a
\emph{second $2$-cover}: any $4$-cover $D\to E$ factors through a
$2$-cover $C\to E$. The cover $D\to C$ is referred to as a
\emph{second $2$-cover}. In terms of Galois cohomology this
corresponds to the sequence
\[H^1(k,E[2])\to H^1(k,E[4])\to H^1(k,E[2]),\] where the first
$H^1(k,E[2])$ classifies the cover $D\to C$ and the second $H^1(k,E[2])$
classifies $C\to E$. 

In Section~\ref{S:24trivial} we review classical $4$-descent. It has
the drawback for us that it only gives those $4$-covers with a degree
$4$ model in $\PP^3$.  In Section~\ref{S:Dprime_nu} we describe a
variant of this method.  In particular, given $D\to C\to E$ where $D$
has a degree~$4$ model in $\PP^3$, we show how to twist the second
$2$-cover $D \to C$ by an arbitrary element of $H^1(k,E[2])$. The new
$4$-cover has a degree $8$ model in $\PP^7$.
However, for our purposes, it is convenient to project this to a curve
in $\PP^5$, still of degree $8$.
	
In Section~\ref{sec:thetashift} we quantify the difference between
$S^\pm_{E,\xi}(n)$ and $T^\pm_{E,\xi}(n)$ for arbitrary $n$.  Indeed
the fibres differ by a cohomology class $\nu = \nu(t)$, which we call
the {\em shift}.  It was already shown in \cite{descI}*{Lemma 3.11}
that the shift is trivial when $n$ is odd. We show that when $n$ is
even the shift takes values in $H^1(\K,E[2])$.

Section~\ref{sec:shioda} reviews the geometry of the surfaces $S(4)$
and $T(4)$.  In Section~\ref{S:twist_shioda} we describe how to
compute the twists of these surfaces so that a prescribed $4$-cover $D
\to E$ appears as a fibre. We assume that $D$ is given as a quadric
intersection in $\PP^3$.  We use the invariant theory
in~\cite{g1hessians} to write down the required twist of $T(4)$.  By
finding an explicit formula for the shift, and then using the method
in Section~\ref{S:Dprime_nu}, we are then able to compute the required
twist of $S(4)$.

The methods in Section~\ref{S:twist_shioda} for computing
$S^+_{E,\xi}(4)$ and $T^+_{E,\xi}(4)$ are modified in
Section~\ref{S:rev} to compute $S^-_{E,\xi}(4)$ and $T^-_{E,\xi}(4)$.
The arguments here are somewhat simplified by the observation that an
elliptic curve and its quadratic twist by its discriminant are reverse
$4$-congruent.

In Section~\ref{S:examples} we give several examples.
We exhibit some elliptic curves $E/\Q$ such that for the elements $\xi\in H^1(\Q,E[4])$ representing elements of order $4$ in $\Sha(E/\Q)$, the surface $S^-_{E,\xi}(4)$ has no rational points. We show this by computing an explicit model of the surface and checking that the surface has no $p$-adic points for some prime $p$. If $E(\Q)/2E(\Q)$ is trivial, it follows that visibility in a surface can only happen via a rational point on $S^+_{E,\xi}(4)$. We prove in Proposition~\ref{P:noprincpol} that if the Galois action on $E[4]$ is large enough, then the resulting abelian surface does not admit a principal polarization.
	
We note that if $\xi \in H^1(\K,E[4])$ represents an element in
$\Sha(E/\K)$ then $S^+_{E,\xi}(4)$ has points everywhere
locally. Thus, any failure for $S^+_{E,\xi}(4)$ to have rational
points constitutes a failure of the Hasse Principle. We plan to
investigate this possibility further in future work.

\section{Preliminaries}
\label{S:preliminaries}

\subsection{Notation}

For a field $\K$, we write $\Kbar$ for its separable closure.  We
write $t$ for the generic point on various modular curves we
consider. When these curves are isomorphic to $\PP^1$, then $t$ will
instead denote a coordinate on $\PP^1$. The identity element on an
elliptic curve $E$ will be written as either $0$ or $0_E$.

\subsection{Geometric interpretations of $H^1(k,E[n])$}

Let $k$ be a field of characteristic not dividing $n$ and let $E$ be
an elliptic curve over $k$.

\begin{Definition}
  An \emph{$n$-cover} of $E$ over $k$ is a pair $(C,\pi)$, where $C$
  is a nonsingular complete irreducible curve over $k$ and $\pi\colon
  C\to E$ is a morphism such that there exists an isomorphism
  $\psi\colon (C\times_\K\Kbar)\to (E\times_\K\Kbar)$ satisfying
  $\pi=[n]\circ \psi$. Two $n$-covers $(C_1, \pi_1)$ and $(C_2,\pi_2)$
  are \emph{isomorphic over $k$} if there is an isomorphism $\alpha :
  C_1 \to C_2$ over $k$ such that $\pi_1 = \pi_2 \circ \alpha$.
\end{Definition}

The isomorphism classes of $n$-covers are naturally parametrized by
$H^1(k,E[n])$. This means that given $\xi\in H^1(k,E[n])$ there is an
$n$-cover $C_{E,\xi}\to E$, any $n$-cover is isomorphic to one of this
form, and two $n$-covers are isomorphic if and only if they arise from
the same class $\xi$. Restriction of cocycle classes corresponds to
base extending the cover.

If $(C,\pi)$ is an $n$-cover of $E$ then
$C$ itself is a twist of $E$ (as a curve, not as an elliptic
curve). In fact $C$ has the structure of homogeneous space under $E$,
and so represents an element in $H^1(k,E)$. The map $H^1(k,E[n])\to
H^1(k,E)$ may be interpretted as forgetting the covering map $\pi$. In
particular its kernel consists of those $n$-covers $(C,\pi)$ for which
$C(k)$ is non-empty.

\begin{Definition}
  A \emph{theta group for $E[n]$} is a central extension $0\to \Gm\to
  \Theta\to E[n]\to 0$ of $k$-group schemes such that the commutator
  pairing on $\Theta$ agrees with the Weil-pairing on $E[n]$. An
  isomorphism of theta groups is an isomorphism of central extensions
  as $k$-group schemes.
\end{Definition}

The lift of the (translation) action of $E[n]$ on the linear system
$|n.0_E|$ to the Riemann-Roch space $L(n.0_E)$ gives a theta group
$\Theta_E$. If $n\geq 3$ then $L(n.0_E)$ is the space of
global sections of a very ample line bundle $\LL_{E,n}$. Choosing a
basis for this space provides a map $E\to\PP^{n-1}$ that gives a model
for $E$ as an elliptic normal curve of degree $n$.  The theta group
$\Theta_E$ is then the full inverse image in $\GL_n$ of the group of
projective linear transformations describing the action of $E[n]$ on
$E$ by translation.

As observed in \cite{descI}, there is an action of $E[n]$ on
$\Theta_E$ by conjugation, and every automorphism of $\Theta_E$ arises
in this way.  Therefore the isomorphism classes of theta groups for
$E[n]$, viewed as twists of $\Theta_E$, are parametrised by
$H^1(k,E[n])$.

Since an $n$-cover $C_{E,\xi}$ (as a curve) is a twist of $E$ by a
cocycle taking values in $E[n]$, we see that $C_{E,\xi}$ comes
equipped with a degree $n$ line bundle $\LL_{E,\xi}$ with a theta
group $\Theta_{E,\xi}$ acting on it. It may be checked that
$\Theta_{E,\xi}$ is indeed the twist of $\Theta_E$ by $\xi$ (in the
sense of the last paragraph).  The line bundle $\LL_{E,\xi}$ provides
a model of $C_{E,\xi}$, but now only in an $(n-1)$-dimensional
Brauer-Severi variety, i.e. a possibly non-trivial twist of
$\PP^{n-1}$. The $k$-isomorphism class of the Brauer-Severi variety
gives a class in $\Br(k)[n]$.

\begin{Definition}
  We write $\ob_{E,n}\colon H^1(k,E[n])\to\Br(k) $ for the map that
  sends $\xi\in H^1(k,E[n])$ to the class of the Brauer-Severi variety
  corresponding to the global sections of $\LL_{E,\xi}$.  In
  particular $\ob_{E,n}(\xi)$ is trivial if and only if $C_{E,\xi}$
  admits a degree $n$ model in $\PP^{n-1}$ with $\LL_{E,\xi}$ the pull
  back of ${\mathcal O}(1)$.  In later sections we write
  $\ob_n(C_{E,\xi})=\ob_{E,n}(\xi)$.
\end{Definition}

It is shown in \cite{descI} that $\ob_{E,n}(\xi)$ is determined by the
isomorphism class of $\Theta_{E,\xi}$ (as a theta group for $E[n]$)
without reference to $E$ itself.

\subsection{Twists of full level modular curves}

An \emph{$n$-congruence} between two elliptic curves $E,E'$ over $k$
is an isomorphism of $k$-group schemes $\sigma\colon E[n]\to
E'[n]$. We only concern ourselves with the case that the
characteristic of $k$ does not divide $n$.  The torsion subgroup
scheme $E[n]$ of an elliptic curve comes equipped with a Weil pairing
\[ e_n\colon E[n]\times E[n]\to \mu_n.\] When classifying
$n$-congruences one should take into account what happens to the Weil
pairing. The following proposition is trivial to prove and shows that
$\Aut(\mu_n) \isom (\Z/n\Z)^\times$ classifies the possible types of
$n$-congruences.
\begin{Proposition}
\label{P:n-congruence_mu-auto}
Let $\sigma\colon E[n]\to E'[n]$ be an $n$-congruence.  Then there
exists $\tau_\sigma\in\Aut(\mu_n)$ such that
\[e_n\circ(\sigma\times\sigma)=\tau_\sigma\circ e_n.\] We say that
$\sigma$ is a \emph{direct} $n$-congruence if
$\tau_\sigma(\zeta)=\zeta$, and that $\sigma$ is a \emph{reverse}
$n$-congruence if $\tau_\sigma(\zeta)=\zeta^{-1}$.
\end{Proposition}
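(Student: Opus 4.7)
The plan is to produce $\tau_\sigma$ as an $a$-th power map on $\mu_n$, with the exponent $a \in (\Z/n\Z)^\times$ extracted from comparing two alternating pairings on $E[n]$.

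First, I would introduce the pulled-back pairing $e'(P,Q) := e_n(\sigma(P), \sigma(Q))$ on $E[n] \times E[n]$. Since $\sigma$ is a homomorphism of $k$-group schemes and $e_n$ is bilinear, alternating, and non-degenerate over $k$, the pairing $e'$ inherits each of these properties and is likewise defined over $k$. Non-degeneracy follows because $\sigma$ is an isomorphism: if $e'(P, \cdot) \equiv 1$ then $e_n(\sigma(P), R) \equiv 1$ for every $R \in E'[n]$ (let $R = \sigma(Q)$ and vary $Q$), forcing $\sigma(P) = 0$ and hence $P = 0$.

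Next, I would work geometrically. The group $E[n](\Kbar)$ is a free $\Z/n\Z$-module of rank $2$, so alternating $\Z/n\Z$-bilinear forms on it valued in $\mu_n(\Kbar)$ form a free $\Z/n\Z$-module of rank one, generated by $e_n$. Hence there is a unique $a \in \Z/n\Z$ with $e_n(\sigma(P), \sigma(Q)) = e_n(P,Q)^a$ for all $P, Q$. Using the identity $e_n(P,Q)^a = e_n(aP, Q)$, the left kernel of $e'$ coincides with $E[n][\gcd(a,n)]$, which is trivial precisely when $\gcd(a,n) = 1$; so the non-degeneracy of $e'$ established in the first step forces $a \in (\Z/n\Z)^\times$.

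Finally, I would take $\tau_\sigma$ to be the $a$-th power map on $\mu_n$. Under the standard identification $\Aut(\mu_n) = (\Z/n\Z)^\times$ (valid over any field whose characteristic does not divide $n$) this is an automorphism of $\mu_n$ as a $k$-group scheme, and the desired identity $e_n \circ (\sigma \times \sigma) = \tau_\sigma \circ e_n$ holds on $\Kbar$-points by construction. Since $\sigma$, $e_n$, and $\tau_\sigma$ are all defined over $k$, the identity is an equality of $k$-morphisms. The only substantive input is the classification of alternating forms on a rank-two free $\Z/n\Z$-module together with the non-degeneracy argument that excludes zero-divisor values of $a$; everything else is formal.
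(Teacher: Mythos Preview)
Your argument is correct and is precisely the routine verification one would expect: pull back the Weil pairing via $\sigma$, use that alternating $\Z/n\Z$-bilinear forms on a rank-$2$ free module form a rank-$1$ module, and extract the unit $a$ from non-degeneracy. The paper does not actually prove this proposition---it simply declares it ``trivial to prove''---so there is no alternative approach to compare against; your write-up is exactly the standard filling-in of that omitted detail.
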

In our case, for $n=4$, any $n$-congruence is either direct or
reverse.

Fixing an elliptic curve $E$, we consider the moduli space
$Y_E^+(n)(k)$ of pairs $(E',\sigma)$, where $\sigma\colon E[n]\to
E'[n]$ is a direct $n$-congruence. This moduli space is represented by
a curve $Y_E^+(n)$ over $k$, whose non-singular completion $X_E^+(n)$
is a twist of the modular curve $X(n)$ of full level $n$.  Similarly,
we write $X_E^-(n)$ for the twist of $X(n)$ corresponding to the
moduli space of pairs $(E',\sigma)$ where $\sigma\colon E[n]\to E'[n]$
is a reverse $n$-congruence.

If $\sigma$ is a direct or reverse $n$-congruence, then the
automorphism $\tau_\sigma$ from
Proposition~\ref{P:n-congruence_mu-auto} can be extended to an
automorphism $\Gm\to \Gm$. In this case, we see that $\sigma$ provides
a way of comparing theta groups.

\begin{Definition} 
\label{def:sigma-isom}
Let $\sigma\colon E[n]\to E'[n]$ be a direct or reverse
$n$-congruence.  Given theta groups $\Theta,\Theta'$ for $E[n]$ and
$E'[n]$, we say a morphism $\psi\colon \Theta\to\Theta'$ (as
$\K$-group schemes) is a $\sigma$-isomorphism if it makes the diagram
below commutative.
\[
\xymatrix{
  0\ar[r]&\Gm\ar[r]\ar[d]^{\tau_\sigma}&\Theta\ar[d]^\psi\ar[r]&E[n]\ar[d]^\sigma\ar[r]&0\\
  0\ar[r]&\Gm\ar[r]&\Theta'\ar[r]&E'[n]\ar[r]&0 } \]
\end{Definition}

If $\sigma$ is a direct $n$-congruence, this is the normal notion of
isomorphism for theta groups upon identifying $E[n]$ and $E'[n]$ via
$\sigma$.

\subsection{Shioda modular surfaces}\label{S:shioda_modular_surfaces}

For $n\geq 3$, the moduli space $Y_E^+(n)$ is \emph{fine}, so there is
a universal elliptic curve $E_t$ with a direct $n$-congruence
$\sigma_t\colon E[n]\to E_t[n]$ over $Y_E^+(n)$. The relevant property
for us is that any direct $n$-congruence between $E$ and another
elliptic curve can be obtained by specializing $t$ at the relevant
moduli point on $Y_E^+(n)$. We write $S^+_E(n) \to X_E^+(n)$ for the
minimal fibred surface with generic fibre $E_t$. This is a twist of
Shioda's modular surface of full level $n$.

Given $\xi\in H^1(k,E[n])$, we can twist this surface further. Writing
$k_t$ for the function field of $X_E^+(n)$, we view $\xi$ as an
element of $H^1(k_t,E_t[n])$ and take the $n$-cover $C_{t,\xi}\to E_t$
representing it. Let $S^+_{E,\xi}(n) \to X^+_E(n)$ be the minimal
fibred surface with generic fibre $C_{t,\xi}$.  This is again a twist
of Shioda's modular surface, and is isomorphic to $S^+_E(n)$ over the
splitting field of $\xi$.  In \cite{Mazur-cubics}, the surfaces
$S_E^+(n)$ and $S_{E,\xi}^+(n)$ are called {\em first} and {\em
  second} twists.

We define $S^-_{E,\xi}(n)$ similarly, by using a universal reverse
$n$-congruence $\sigma_t$.

\subsection{Theta modular surfaces}
Alternatively, given an elliptic curve $E$ over $k$ and $\xi\in
H^1(k,E[n])$, we base extend $\Theta_{E,\xi}$ to a theta group over
$k_t$, the function field of $X^+_E(n)$.  Then $\Theta_{E,\xi}\times_k
k_t=\Theta_{E_t,\xi_t}$ for some $\xi_t\in H^1(k_t,E_t[n])$. Just as
in Section~\ref{S:shioda_modular_surfaces}, we take the $n$-cover
$C_{t,\xi_t}\to E_t$ representing $\xi_t$. Then we define the {\em
  theta modular surface} to be the minimal fibred surface
$T_{E,\xi}^+(n) \to X_E^+(n)$ with generic fibre $C_{t,\xi_t}$.

By construction, the fibers of $T_{E,\xi}^+(n) \to X_{E}^+(n)$ are
$n$-covers with a prescribed theta group.  Since $\ob_{E,n}(\xi)$ is a
function of $\Theta_{E,\xi}$, we see that $\ob_{E_t,n}(\xi_t)$ is the
base change of $\ob_{E,n}(\xi)$ to $k_t$. In particular, if
$\ob_{E,n}(\xi)=0$ then $C_{t,\xi_t}$ admits a degree $n$ model in
$\PP^{n-1}$, with a linear action of $E_t[n]$. In that case, it
follows that $T_{E,\xi}^+(n)$ is birational to a surface in
$\PP^{n-1}$, with an action of $E[n]$ through $\Theta_{E,\xi}$.  This
allows us to use invariant theory to write down models of
$T_{E,\xi}^+(n)$.

Interestingly enough, $\nu(t)=\xi_t-\xi$ is not necessarily
trivial. In fact, Theorem~\ref{T:theta_shift} proves that $\nu(t)$ is
$2$-torsion (as is seen by applying the result to the elliptic curves
$E\times_k k_t$ and $E_t$).  In particular, if $n$ is odd then $\nu(t)
= 0$ and the surfaces $S^+_{E,\xi}(n)$ and $T^+_{E,\xi}(n)$ are
isomorphic.

We define $T^-_{E,\xi}(n)$ similarly, by reversing the order of
multiplication on $\Theta_{E,\xi}$.  
Since $(\Z/4\Z)^\times = \{\pm 1\}$, this is 
sufficient to define $T_{E,\xi}(4)$.

\section{Computing $4$-covers}
\label{sec:4cover}

Let $E$ be an elliptic curve and let $\xi\in H^1(k,E[4])$. In this
section, we write $D_{E,\xi}$ for the corresponding $4$-cover of
$E$. We have $2\xi\in H^1(k,E[2])$ and write $C_{E,2\xi}$ for the
corresponding $2$-cover of $E$. Note that the $4$-cover $D_{E,\xi}\to
E$ naturally factors as $D_{E,\xi}\to C_{E,2\xi}\to E$. It turns out
to be advantageous to study $4$-covers via this intermediate
structure.

\begin{Definition} Let $C\to E$ be a $2$-cover. A $2$-cover of $C$ is
  a cover $D\to C$ such that the composition of covers $D\to C\to E$
  is a $4$-cover. If we want to emphasize that $D$ is a $2$-cover of a
  $2$-cover, and not of an elliptic curve directly, we say that $D\to
  C$ is a \emph{second} $2$-cover.
\end{Definition}

In this section we are concerned with finding models of second
$2$-covers. We fix a $2$-cover $C\to E$ over a field $k$ with $\Char k
\not=2,3$.  We assume $\ob_{E,2}(C)=0$, so that $C$ has a model
\begin{equation}
\label{BQ}
C\colon \quad Y^2=G(X,Z),
\end{equation}
where $G$ is a binary quartic form, say
\[ G(X,Z) = a X^4 + b X^3 Z + c X^2 Z^2 + d X Z^3 + e Z^4. \] The
classical invariants of $G$ are
\begin{equation}
\label{invIJ}
\begin{aligned}
I &= 12 a e - 3 b d + c^2, \\
J &= 72 a c e - 27 a d^2 - 27 b^2 e + 9 b c d - 2 c^3.
\end{aligned}
\end{equation}
As observed by Weil, a model for $E$ is given by
\begin{equation}
\label{JacBQ}
E\colon\quad y^2 = x^3 + \wA x + \wB \text{ where } 
\wA = -I/48 \text{ and }\wB =-J/1728.
\end{equation}
In the next two sections we assume for simplicity that $ae \not= 0$.

\subsection{Models of $2$- and $4$-covers with trivial obstruction}
\label{S:24trivial}
In this section we review classical $4$-descent, as described in
\cites{ANTS:4desc,MSS,Stamminger,Womack} and
implemented in Magma \cite{magma}.

If $D\to C$ is a second $2$-cover such that $\ob_4(D)=0$ then $D$
admits a degree $4$ model in $\PP^3$, say
\[D\colon \quad Q_1=Q_2=0,\] where $Q_1,Q_2\in k[\x]=k[x_1,x_2,x_3,x_4]$ are
quadratic forms. Conversely, any such smooth quadric intersection is a
$4$-cover of an elliptic curve.

Let $A_i$ be the symmetric matrix such that
$Q_i(\x)=\frac{1}{2}\x^TA_i\x$.  Then the intermediate $2$-cover has a
model~\eqref{BQ} in weighted projective space, with
$G(X,Z)=\det(XA_1+ZA_2)$. In particular $\ob_2(C)=0$.  Let $T_1,T_2
\in k[\x]$ be the quadratic forms given by
$T_i(\x)=\frac{1}{2}\x^TB_i\x$ where
\[ \adj((\adj A_1)X+(\adj A_2)Z) 
 =  a^2 A_1 X^3 + a B_1 X^2 Z - e B_2 X Z^2 + e^2 A_2 Z^3. \]
Then the covering map $D \to C$ is given by $(X:Z:Y) = (T_1:T_2:J)$ where
\[ J = (1/4)
\frac{\partial(Q_1,Q_2,T_1,T_2)}{\partial(x_1,x_2,x_3,x_4)}. \]

Let $F=k[\theta]$ be the \'etale algebra over $k$ generated by a root
$\theta$ of $g(x)=G(x,1)$. A generic calculation shows that the
quadratic form
\begin{equation}
\label{rank1}
 \Xi = \theta^{-1} e Q_1 + T_1 - \theta T_2 + \theta^2 a Q_2 
\end{equation}
in $F[\x]$ has rank $1$ and satisfies $a N_{F/k} (\Xi) = J^2$.
Specialising at any sufficiently general $\x \in k^4$ shows that there
exist $\alpha\in F^\times$ and $r\in k^\times$ with
$N_{F/k}(\alpha)=ar^2$.  Moreover the class of $\alpha$ in
$F^\times/F^{\times2}\K^\times$ only depends on the isomorphism class
of the $2$-cover $D \to C$.

Conversely, given a $2$-cover $C$ of the form~\eqref{BQ}, we can
construct its $2$-covers $D$ with $\ob_4(D)=0$ in the following way.
Let $F = k[\theta]$ as above, and suppose that $\alpha\in F^\times$
and $r\in k^\times$ satisfy $N_{F/k}(\alpha)=ar^2$.  We consider the
equation
\begin{equation}
\label{oldstart}
\alpha(X - \theta Z) = (x_1 + x_2 \theta + x_3 \theta^2 + x_4
\theta^3)^2.
\end{equation}
Expanding in powers of $\theta$ gives $4$ equations of the form
\[ \text{(linear form in $X$ and $Z$)} = \text{(quadratic form in
  $x_1, \ldots, x_4$)}. \] Taking the norm $N_{F/\K}$ and then
extracting a square root gives, upon choosing the sign of $r$, a further
equation
\[ r Y = N_{F/\K}( x_1 + x_2 \theta + x_3 \theta^2 + x_4 \theta^3). \]
Taking linear combinations of these equations gives expressions for
$X,Y,Z$ in terms of $x_1, \ldots ,x_4$, and two further quadratic
equations in $x_1,\ldots,x_4$ only. These define a $2$-cover $D_\alpha
\to C$. Moreover the isomorphism class of this $2$-cover only depends
on the class of $\alpha$ in $F^\times/F^{\times2}\K^\times$.

The two constructions just presented are inverse to one another.  We
thus obtain the following proposition. In stating it we use our
freedom to multiply $G(X,Z)$ by a square to reduce to the case $r=1$.
\begin{Proposition}
\label{P:4cov_trivob}
Let $C \to E$ be a $2$-cover. If there is a second $2$-cover $D \to C$
with $\ob_4(D)=0$ then $C$ has a model of the form
$Y^2=N_{F/k}(\alpha(X-\theta Z))$. Moreover if $C$ takes this form
then the collection of all $2$-covers $D$ of $C$ with $\ob_4(D)=0$ is
given by
\begin{equation}
\label{subq}
\ker(N_{F/\K}\colon F^\times/F^{\times2}\K^\times\to \K^\times/\K^{\times 2})
\end{equation}
via the map $\delta\mapsto D_{\alpha \delta}$.
\end{Proposition}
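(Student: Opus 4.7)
The plan is to show that the two constructions preceding the proposition---the passage $D\mapsto[\alpha]\in F^\times/F^{\times2}k^\times$ via the rank-one identity for the form $\Xi$ in~\eqref{rank1}, and the passage $\alpha\mapsto D_\alpha$ via equation~\eqref{oldstart}---are mutually inverse bijections on equivalence classes, and then to repackage the conclusion as a kernel.

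I would first verify well-definedness in each direction. Since $\Xi$ has rank one over $F$ (a generic identity in the coefficients $a,\ldots,e$), it factors as $\Xi=\alpha\ell^2$ for some linear form $\ell$ over $F$, and the identity $aN_{F/k}(\Xi)=J^2$ then gives $N_{F/k}(\alpha)=ar^2$ with $r:=J/N_{F/k}(\ell)\in k^\times$ upon specialising $\x$ to any $k$-point with $N_{F/k}(\ell(\x))\neq 0$. A $\GL_4(k)$-change of basis on $\PP^3$ alters $\ell$ by an $F^\times$-scalar, multiplying $\alpha$ by an element of $F^{\times2}$, while a common $k^\times$-scaling of $(Q_1,Q_2)$ scales $\alpha$ by an element of $k^\times$. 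Thus $[\alpha]$ is well defined in $F^\times/F^{\times2}k^\times$. For the reverse direction, replacing $\alpha$ by $\beta^2\alpha$ in~\eqref{oldstart} amounts to the substitution $\sum x_i\theta^{i-1}\mapsto\beta\sum x_i\theta^{i-1}$, which after expanding in the $F$-basis $1,\theta,\theta^2,\theta^3$ is a $\GL_4(k)$-substitution on the $x_i$; replacing $\alpha$ by $\lambda\alpha$ with $\lambda\in k^\times$ rescales $r$ to $\lambda^2 r$ and yields the same curve after relabelling. Hence $D_\alpha$ depends only on $[\alpha]$.

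That the two constructions are mutually inverse is a direct unwinding of definitions: inserting the $\alpha$ read off from $\Xi$ back into~\eqref{oldstart} recovers the original $(Q_1,Q_2)$ up to the allowed $\GL_4(k)$-action, and conversely. Rescaling $G$ by $r^{-2}$ (and $Y$ by $r^{-1}$) puts $C$ in a $k$-isomorphic model where $N_{F/k}(\alpha)=a$; the norm identity $N_{F/k}(\alpha(X-\theta Z))=N_{F/k}(\alpha)\cdot G(X,Z)/a$ then gives $Y^2=N_{F/k}(\alpha(X-\theta Z))$, proving the first claim. In this normalised model any second $2$-cover $D$ with $\ob_4(D)=0$ produces some $[\alpha']=[\alpha\delta]$ with $N_{F/k}(\alpha\delta)\in a(k^\times)^2=N_{F/k}(\alpha)(k^\times)^2$, i.e.\ $N_{F/k}(\delta)\in(k^\times)^2$---exactly the kernel condition of~\eqref{subq}---and every such $\delta$ is realised because~\eqref{oldstart} applied to $\alpha\delta$ returns a valid quadric intersection. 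The main obstacle is the mutual-inverse verification, which rests on the rank-one structure of $\Xi$ and the compatibility of the explicit inversion formulas with it; this is a generic polynomial identity over $\Z[a,b,c,d,e]$ and is most efficiently verified by a direct computer-algebra calculation.
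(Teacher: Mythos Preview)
Your proposal is correct and follows essentially the same route as the paper: the paper simply asserts that the two constructions described before the proposition (namely $D\mapsto[\alpha]$ via the rank-one form $\Xi$, and $\alpha\mapsto D_\alpha$ via~\eqref{oldstart}) are inverse to one another, and notes that one may rescale $G$ by a square to arrange $r=1$. You have expanded this into explicit well-definedness checks on both sides and spelled out how the kernel condition~\eqref{subq} falls out of the norm relation, but the underlying argument is identical.
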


\begin{Remark}
\label{4desc-rems}
(i) Strictly speaking we should specify a choice of square root of
$N_{F/\K}(\delta)$, otherwise the $2$-covers $D_{E,\xi}$ and
$D_{E,-\xi}$ of $C_{E,2\xi}$, differing by the automorphism $Y \mapsto
-Y$ of $C_{E,2\xi}$,
cannot be distinguished. \\
(ii) Let $g'(x)$ be the derivative of $g(x) =G(x,1)$.  It is sometimes
convenient to write the equations for $D_\alpha$ as
\begin{equation}
\label{traceq}
\tr_{F/k} \left( \frac{x^2 }{\alpha  g'(\theta)} \right)
= \tr_{F/k}  \left( \frac{  \theta x^2}{\alpha g'(\theta)} \right) 
= 0,
\end{equation}
where $x=x_1+x_2\theta+x_3\theta^2+x_4\theta^3$. \\
(iii) The group~\eqref{subq} may be identified with a certain subgroup
of $H^1(k,E[2])/\langle 2\xi \rangle$ where $2\xi$ is the class of the
$2$-cover $C \to E$.  See \cite{ANTS:4desc} for further details.
\end{Remark}

\subsection{Models for twists of second $2$-covers}
\label{S:Dprime_nu}
Let $D \to C$ be a second $2$-cover with $\ob_4(D)=0$.  By
Proposition~\ref{P:4cov_trivob} we may assume that $C$ has a model
\begin{equation}
\label{BQ1}
C: \quad Y^2=G(X,Z)=N_{F/\K}(\alpha(X-\theta Z)),
\end{equation}
and that $D = D_\alpha$.  In this section we are interested in models
for \emph{any} $2$-cover of $C$; not just the ones with trivial
obstruction (as a $4$-cover). These covers are parametrized by
$H^1(k,E[2])$.

We recall that $C$ is a $2$-cover of the elliptic curve $E\colon
y^2=f(x)= x^3+\wA x+\wB$ given by~\eqref{JacBQ}.  Let $L=\K[\varphi]$
be the \'etale algebra over $k$ generated by a root $\varphi$ of
$f(x)$.  It is well known that
\begin{equation}\label{H1isom}
  H^1(k,E[2])\isom \ker(N_{L/k}\colon L^\times/ L^{\times2}\to k^\times/k^{\times2}).
\end{equation}
In fact this is the special case of Remark~\ref{4desc-rems}(iii) with
$C = E$.

The algebra $L$ is related to $F$ in the following way. We have that
$g(x)=(x-\theta)h(x)$, for some cubic $h(x)\in F[x]$. Then
$F[x]/(h(x))=L\otimes_\K F$, which we denote by $LF$. As an algebra
over $\K$, it is obtained by formally adjoining two roots, say
$\theta$ and $\tilde{\theta}$, of $g(x)$. Let $\sigma\in \Aut_\K(LF)$
be the involution that swaps $\theta$ and $\tilde{\theta}$.  We write
$M$ for the subalgebra of $LF$ fixed by $\sigma$. This is the \'etale
algebra of unordered pairs of roots of $g(x)$ and it has degree
$[M:\K]=6$.  We may identify $L$ as a subalgebra of $M$ via
\begin{equation}
\label{embphi}
\varphi = -(a \theta \tilde{\theta} - c/3 + e/(\theta \tilde{\theta}))/4. 
\end{equation}

We fix a basis $m_1, \ldots, m_6$ for $M$ over $\K$, and put
$\tilde{\alpha} = \sigma(\alpha)$.  Let $\nu \in L^\times$ with
$N_{L/\K}(\nu) = s^2$ for some $s \in \K^\times$. We show how to
construct a twist $\D_\nu\to C$ of $D \to C$.  This will turn out to
be the twist by the element of $H^1(k,E[2])$ corresponding to $\nu$
under the isomorphism~\eqref{H1isom}.

We consider the equation
\begin{equation}
\label{start}
N_{LF/M}(\alpha(X-\theta Z))=\alpha 
\tilde{\alpha}(X - \theta Z)(X - \tilde{\theta} Z) 
= \nu (y_1 m_1 + \ldots + y_6 m_6)^2. 
\end{equation}
Expanding and taking coefficients of $m_1, \ldots, m_6$ gives $6$
equations of the form
\[ \text{(quadratic form in $X$ and $Z$)} = \text{(quadratic form in
  $y_1, \ldots, y_6$)}. \] Taking the norm $N_{M/L}$ and then
extracting a square root gives
\begin{equation}
\label{eqn:Y}
    Y = \nu \, N_{M/L}(  y_1 m_1 + \ldots + y_6 m_6 ) 
\end{equation}
and hence $3$ equations of the form
\[ \text{(linear form in $Y$)} = \text{(quadratic form in $y_1,
  \ldots, y_6$)}. \] Taking linear combinations to eliminate $X^2, XZ,
Z^2$ and $Y$ leaves $5$ quadratic forms in $y_1, \ldots, y_6$.  These
define $\D_\nu \subset \PP^5$, a genus $1$ curve of degree $8$.  In
fact $\D_\nu$ is a $2$-cover of $C$. Equations for the covering map
$\D_\nu \to C$ may be computed as follows.  Again starting
from~\eqref{start}, we take the norm $N_{LF/F}$ and then extract a
square root to give
\[ \pm \alpha (X - \theta Z)  Y = s \, N_{LF/F}( y_1 m_1 + \ldots
+y_6 m_6 ).\] The sign choice here may be absorbed into the cubic
norm.  Using~(\ref{eqn:Y}) to eliminate $Y$ and then cancelling the
common factor $y_1 m_1 + \ldots + y_6 m_6$ gives $12$ equations of the
form
\[ \text{(bilinear form in $X,Z$ and $y_1, \ldots, y_6$)} =
\text{(quadratic form in $y_1, \ldots, y_6$)}. \] These equations
together with~(\ref{eqn:Y}) define the covering map $\D_\nu \to C$.

\begin{Remark} 
\label{rems1}
(i) If we just eliminate $Y$ from the $6+3+12$ equations listed above,
then we get $20$ quadrics in $X,Z,y_1, \ldots, y_6$. These define a
genus $1$ curve embedded in $\PP^7$ via a complete linear system of
degree~$8$.  However we will see that working with $\D_\nu \subset
\PP^5$ has some advantages. \\
(ii) Taking $\nu=1$ gives a $2$-cover $\D_1 \to E$ that is isomorphic
to $D \to C$. Indeed on comparing~\eqref{oldstart} and~\eqref{start}
we see that an isomorphism is given by
\[y_1m_1+\cdots+y_6m_6 =(x_1+x_2\theta+x_3\theta^2+x_4\theta^3)
(x_1+x_2\tilde{\theta}+x_3\tilde{\theta}^2+x_4\tilde{\theta}^3).\] In
fact the $y_i$ span the same space as the $2\times 2$ minors of the
$2\times 4$ matrix of partial derivatives
of the quadratic forms defining $D$. \\
(iii) A generic calculation shows that $\D_\nu \subset \PP^5$ has
degree $8$ and its homogeneous ideal is (minimally) generated by $5$
quadrics and $2$ cubics.  However the $5$ quadrics
are sufficient to define the curve set-theoretically. \\
(iv) Let $z \in L^\times$ correspond under the
isomorphism~\eqref{H1isom} to the class of $C \to E$. By
\cite{CremonaBQ}*{Equation (3.1)} we have $z \in M^{\times2}$, and so
$z$ is a Kummer generator for the quadratic extension $M/L$.
Absorbing $z$ into the squared factor on the right of~\eqref{start} we
see that $\D_\nu$ and $\D_{\nu z}$ are isomorphic as curves. However
as $2$-covers of $C$ they differ by the automorphism $Y \mapsto -Y$.
\end{Remark}

We prove an analogue of Proposition~\ref{P:4cov_trivob}.
\begin{Proposition}
\label{P:4cov}
Let $C$ be the $2$-cover~\eqref{BQ1}.  Then the collection of all
$2$-covers of $C$ is given by $\ker(N_{L/\K}\colon
L^\times/L^{\times2}\to \K^\times/\K^{\times 2})$ via the map $\nu
\mapsto \D_\nu$.
\end{Proposition}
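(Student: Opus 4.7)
The plan is to identify the assignment $\nu \mapsto \D_\nu$ with the Kummer-theoretic parametrisation~\eqref{H1isom} of $H^1(k, E[2])$. Since the collection of $2$-covers of $C$ forms a torsor under $H^1(k, E[2])$ with basepoint $\D_1 = D$ (by Remark~\ref{rems1}(ii)), establishing this identification will give the claimed bijection.

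First I would verify well-definedness on $L^\times/L^{\times 2}$: if $\nu' = \nu w^2$ with $w \in L^\times$, the substitution $y_1 m_1 + \cdots + y_6 m_6 \mapsto w^{-1}(y_1 m_1 + \cdots + y_6 m_6)$ (viewing $w \in L \subset M$) absorbs the extra factor $w^2$ in~\eqref{start} and gives a linear isomorphism $\D_{\nu'} \isom \D_{\nu}$ that commutes with the covering maps to $C$. Scaling $\nu$ by $c \in \K^\times$ preserves the kernel condition only when $c \in \K^{\times 2}$, since $N_{L/\K}(c\nu) = c^3 N_{L/\K}(\nu)$, so no further identifications are forced.

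The main step is the cocycle computation over $\Kbar$. Choose $\omega \in (L \otimes_\K \Kbar)^\times$ with $\omega^2 = \nu$, and adjust the sign of $s$ so that $N_{L/\K}(\omega) = s$. The substitution $y_1 m_1 + \cdots + y_6 m_6 \mapsto \omega^{-1}(y_1 m_1 + \cdots + y_6 m_6)$ transforms the equations derived from~\eqref{start} and~\eqref{eqn:Y} into the corresponding equations for $\D_1 = D$, using $N_{M/L}(\omega) = \omega^2 = \nu$ for the quadratic equations and $N_{LF/F}(\omega) = N_{L/\K}(\omega) = s$ for the covering-map equations. This simultaneously shows that $\D_\nu$ is smooth of genus~$1$ and degree~$8$ with a degree-$2$ morphism to $C$, and identifies $\D_\nu$ as the twist of $D$ by the Galois cocycle $\sigma \mapsto \sigma(\omega)/\omega \in \mu_2(L \otimes_\K \Kbar)$. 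Under~\eqref{H1isom} this cocycle represents the Kummer class $[\nu]$, so the map $\nu \mapsto \D_\nu$ coincides with~\eqref{H1isom}, and is therefore a bijection.

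The main obstacle is this third step: one must check that the substitution simultaneously trivialises all $6+3$ equations from~\eqref{start} and~\eqref{eqn:Y}, and that the resulting cocycle lands in $L^\times[2]$ rather than merely $M^\times[2]$. The latter point follows from $\nu$ lying in $L \subset M$, which permits $\omega$ to be chosen in $L \otimes_\K \Kbar$, together with the fact that the construction depends on $\nu$ only through the $L$-covariant equations~\eqref{start} and~\eqref{eqn:Y}.
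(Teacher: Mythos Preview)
Your approach matches the paper's: exhibit a $\Kbar$-isomorphism $\D_\nu \to \D_1$ over $C$ via multiplication by a square root $\omega$ of $\nu$, then read off the twisting cocycle $\sigma \mapsto \sigma(\omega)\omega^{-1}$. The gap is in the final step. You have shown that this cocycle takes values in $\mu_2(L \otimes_\K \Kbar)$, acting linearly on $\PP(M) = \PP^5$, and that its Kummer class is $[\nu]$. But to conclude that $\nu \mapsto \D_\nu$ coincides with~\eqref{H1isom} you must identify this linear action of $\mu_2(\Lbar)$ on $\D_1$ with the action of $E[2]$ on $\D_1$ arising from its homogeneous-space structure, and check that the identification is the Weil-pairing embedding $w\colon E[2] \hookrightarrow \mu_2(\Lbar)$, $S \mapsto (T \mapsto e_2(S,T))$, which is what underlies~\eqref{H1isom}. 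Without this, you have not even established that $\nu \mapsto \D_\nu$ is injective: two distinct classes $[\nu]$ could give the same twist if the linear action of $\ker(N\colon \mu_2(\Lbar) \to \mu_2)$ on $\D_1$ failed to be faithful. Your remark that the cocycle lands in $\mu_2(\Lbar)$ rather than $\mu_2(M\otimes\Kbar)$ is correct but does not address this point.

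The paper fills exactly this gap: it writes $\Kbar(\D_1) = \Kbar(C)(\sqrt{f_{12}}, \sqrt{f_{13}})$ with $f_{ij} = (X - \theta_i Z)(X - \theta_j Z)/Z^2$, observes that the homogeneous-space action of $E[2]$ on $\D_1$ is by sign changes $\sqrt{f_{ij}} \mapsto \pm \sqrt{f_{ij}}$, and checks from the definition of the Weil pairing that these sign changes coincide with multiplication by $w(S) \in \mu_2(\Lbar)$ acting on $\PP(M)$.
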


\begin{proof}
  Let $\eta \in H^1(k,E[2])$ map to the class of $\nu \in L^\times$
  under the isomorphism~\eqref{H1isom}. To prove the proposition, we
  show that $\D_\nu \to C$ is the twist of $\D_1 \to C$ by $\eta$.

  Let $\Lbar = L \otimes_k \Kbar$. Note that $L$ is the coordinate
  ring of $E[2]\setminus\{0_E\}$, so $\Lbar = \Map(E[2](\Kbar)
  \setminus \{0_E\},\Kbar)$.  There is a homomorphism of Galois
  modules
\begin{align*}  
 w : E[2] & \to \mu_2(\Lbar) \\
S & \mapsto (T \mapsto e_2(S, T)). 
\end{align*}
Noting that $M$ is an $L$-algebra and a $6$-dimensional $\K$-vector
space, we see that $L^\times$ acts linearly on $\PP(M) = \PP^5$.  In
particular $S\in E[2]$ acts on $\PP^5$ via $w(S)$ and this action
restricts to $\D_1$.

If $g(x)$ has roots $\theta_1, \ldots, \theta_4$ then
$\Kbar(\D_1)=\Kbar(C)(\sqrt{f_{12}},\sqrt{f_{13}})$ where $f_{ij} =
(X- \theta_iZ)(X-\theta_jZ)/Z^2$. Since $\D_1$ is a homogeneous space
under $E$, there is an action of $E[2]$ on $\D_1$. This is given by
$\sqrt{f_{12}} \to \pm \sqrt{f_{12}}$ and $\sqrt{f_{13}} \to \pm
\sqrt{f_{13}}$. It follows from the definition of the Weil pairing
that this action agrees with the one defined in the last paragraph.

Let $\eta$ be represented by the cocycle $\sigma \mapsto \eta_\sigma$.
Then we have $w(\eta_\sigma) = \sigma(\gamma) \gamma^{-1}$ and $\nu =
\gamma^2$ for some $\gamma \in \Lbar$.  There is a commutative diagram
\[\xymatrix{  \D_\nu \ar[r]^{\cdot \gamma} \ar[d] & \D_1 \ar[d] \\
C \ar@{=}[r] & C } \]
Therefore $\D_\nu \to C$ is the twist of $\D_1 \to C$ via 
the cocycle $\sigma \mapsto \sigma(\gamma) \gamma^{-1}$, 
and this completes the proof.
\end{proof}

\section{Theta groups and the shift}
\label{sec:thetashift}

In this section we prove the following theorem. We work over a field
$\K$ of characteristic not dividing $n$.

\begin{Theorem}
	\label{T:theta_shift}
	Let $E,E'$ be elliptic curves over $\K$ with a direct or a
        reverse $n$-congruence $\sigma\colon E'[n]\to E[n]$. Then
        there exists $\nu\in H^1(\K,E[n])$, depending only on
        $E,E',\sigma$, such that for any $\xi\in H^1(\K,E[n])$ and
        $\xi'\in H^1(\K,E'[n])$ we have
	\begin{enumerate}
		\item $2\nu=0$ (in particular, if $n$ is odd then $\nu=0$).
		\item $\xi=\sigma_*(\xi')+\nu$ if and only if $\Theta_{E,\xi}$ and $\Theta_{E',\xi'}$ are $\sigma$-isomorphic.
	\end{enumerate}
\end{Theorem}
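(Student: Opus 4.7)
The plan is to introduce an operation $\sigma^*$ from theta groups for $E'[n]$ to theta groups for $E[n]$ that is compatible with the torsor structures, use it to define $\nu$, and then derive both statements as short formal consequences.

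For any theta group $\Theta'$ for $E'[n]$, one constructs a theta group $\sigma^*\Theta'$ for $E[n]$ by using $\sigma$ to identify the quotient $\Theta'/\Gm=E'[n]$ with $E[n]$ and $\tau_\sigma$ from Proposition~\ref{P:n-congruence_mu-auto} to reidentify the central $\Gm$; in the reverse case one in addition replaces the multiplication by its opposite. The identity $e_n\circ(\sigma\times\sigma)=\tau_\sigma\circ e_n$ ensures the commutator pairing on $\sigma^*\Theta'$ is the Weil pairing on $E[n]$, so $\sigma^*\Theta'$ is a theta group, and by construction it is $\sigma$-isomorphic to $\Theta'$. The operation $\sigma^*$ intertwines the torsor actions via $\sigma_*\colon H^1(\K,E'[n])\to H^1(\K,E[n])$; so writing $\sigma^*\Theta_{E'}\cong\Theta_{E,\nu}$ for a unique $\nu\in H^1(\K,E[n])$ depending only on $(E,E',\sigma)$, one has
\[
\sigma^*\Theta_{E',\xi'}\;\cong\;\Theta_{E,\,\sigma_*(\xi')+\nu}
\]
for every $\xi'\in H^1(\K,E'[n])$. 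Statement (ii) then follows: a $\sigma$-isomorphism $\Theta_{E',\xi'}\to\Theta_{E,\xi}$ is the same as an isomorphism of theta groups $\sigma^*\Theta_{E',\xi'}\cong\Theta_{E,\xi}$, and by the displayed formula together with the bijection between $H^1(\K,E[n])$ and isomorphism classes of theta groups, this exists iff $\xi=\sigma_*(\xi')+\nu$.

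For (i), the key observation is that both $\Theta_E$ and $\sigma^*\Theta_{E'}$ are fixed by the self-map $[-1]^*$ on iso classes of theta groups induced by pulling back along $[-1]\colon E[n]\to E[n]$. Indeed $\Theta_E$ arises from the symmetric line bundle $\LL_{E,n}$, so the canonical isomorphism $[-1]^*\LL_{E,n}\cong\LL_{E,n}$ gives $[-1]^*\Theta_E\cong\Theta_E$, and likewise for $\Theta_{E'}$; since $\sigma$ is a group homomorphism it commutes with $[-1]$, whence $[-1]^*\sigma^*\Theta_{E'}\cong\sigma^*[-1]^*\Theta_{E'}\cong\sigma^*\Theta_{E'}$. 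On the $H^1(\K,E[n])$-torsor of theta groups, $[-1]^*$ acts, relative to the invariant base point $\Theta_E$, through the automorphism of $H^1(\K,E[n])$ induced by $[-1]$, which is multiplication by $-1$. The invariance of $\sigma^*\Theta_{E'}\cong\Theta_{E,\nu}$ under $[-1]^*$ therefore forces $-\nu=\nu$, i.e.\ $2\nu=0$.

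The main technical obstacle is making the construction of $\sigma^*$ fully precise, particularly in the reverse case, where the opposite multiplication must be combined carefully with $\tau_\sigma$ so that the resulting commutator is still the Weil pairing, and then verifying the functoriality statements asserted above. Once this bookkeeping is in place, both parts of the theorem reduce to the short formal arguments given here.
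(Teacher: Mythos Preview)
Your argument is correct and, for part~(ii), essentially coincides with the paper's: both define $\nu$ by comparing $\Theta_E$ with the theta group obtained from $\Theta_{E'}$ via $\sigma$, and then use the torsor structure under $H^1(\K,E[n])$ to propagate to arbitrary $\xi,\xi'$. Your packaging via the functor $\sigma^*$ is a clean way to say this; the paper instead writes out the cocycle $\rho\mapsto\rho(\psi)\psi^{-1}$ explicitly.

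For part~(i) the two arguments diverge. The paper works concretely inside $\GL_n$: it chooses lifts $M_T$ normalised by the condition $\iota M_T\iota^{-1}=M_T^{-1}$ (where $\iota$ lifts $[-1]$), checks that this forces $M_T^n=1$ when $n$ is even, and deduces that the cocycle for $\nu$ actually lands in $\Hom(E[n],\mu_2)\cong E[2]$, so that $\nu$ lies in the image of $H^1(\K,E[2])\to H^1(\K,E[n])$. Your argument is more functorial: both $\Theta_E$ and $\sigma^*\Theta_{E'}$ are fixed by $[-1]^*$ (the first because $\LL_{E,n}$ is symmetric, the second because $\sigma$ commutes with $[-1]$), and since $[-1]^*$ acts on the torsor as multiplication by $-1$ once one fixes the $[-1]^*$-invariant basepoint $\Theta_E$, one gets $\nu=-\nu$. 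Both arguments are at heart exploiting the same $[-1]$-symmetry; yours avoids the explicit matrix computation and the verification $M_T^n=1$, while the paper's yields the slightly sharper conclusion that $\nu$ lifts to $H^1(\K,E[2])$ (used informally elsewhere in the paper), not merely that $2\nu=0$ in $H^1(\K,E[n])$.

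One small caution on your construction of $\sigma^*$ in the reverse case: re-identifying the central $\Gm$ via $\tau_\sigma$ \emph{already} corrects the commutator pairing, so combining this with passage to the opposite group would undo the correction. You have correctly flagged this bookkeeping as the point requiring care; when you write it out, keep track of whether your candidate $\sigma$-isomorphism $\Theta'\to\sigma^*\Theta'$ is the identity on underlying sets (in which case drop the opposite multiplication) or inversion (in which case the projection to $E[n]$ picks up a sign and you are really constructing a $(-\sigma)$-isomorphism).
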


\begin{proof}
  We start by comparing the trivial theta groups
  $\Theta_{E},\Theta_{E'}\subset\GL_n$.  There is an isomorphism
  $\psi$ defined over $\Kbar$ making the following diagram commute
  \begin{equation}
\label{ThetaEdiag2}
\begin{aligned}
  \xymatrix{ 0 \ar[r] & \Gm \ar[r] \ar[d]^{\tau_\sigma} & \Theta_{E'}
    \ar[r] \ar[d]^{\psi} & E'[n]
    \ar[r] \ar[d]^{\sigma} & 0  \\
    0 \ar[r] & \Gm \ar[r] & \Theta_{E} \ar[r] & E[n] \ar[r] & 0
    \rlap{.}  }
\end{aligned}
\end{equation}
Then $\Theta_{E'}$ is the twist of $\Theta_E$ by the cocycle $\rho
\mapsto \rho(\psi) \psi^{-1}$. This cocycle takes values in
$\Aut(\Theta_E) \isom \Hom(E[n],\Gm) \isom E[n]$, and so gives a class
$\nu \in H^1(\K,E[n])$.  If $n$ is odd then $\nu = 0$ by
\cite{descI}*{Lemma 3.11}.  We now adapt the argument to the case
where $n$ is even.

The automorphism $[-1]$ of $E$ lifts to $\iota \in \GL_n(\K)$.  For
each $T \in E[n](\Kbar)$ we pick a matrix $M_T \in \Theta_E(\Kbar)$,
representing translation by $T$, such that
\begin{equation}
\label{iota}
 \iota M_T \iota^{-1} = M_T^{-1}. 
\end{equation}
This condition determines the scaling of $M_T$ up to a choice of sign.
If $M_{S+T} = \lambda M_S M_T$ then conjugating by $\iota$ shows that
$\lambda^2 = e_n(S,T)$ and so
\begin{equation}
\label{howmult}
  M_{S+T} = \pm e_n(S,T)^{1/2} M_S M_T \quad 
\text{ for all } S,T \in E[n](\Kbar).
\end{equation}

We claim that $M_T^n = 1$. Indeed for a suitable choice of
co-ordinates $x_0, \ldots, x_{n-1}$, defined over $\Kbar$, we have
$M_T: x_i \mapsto \alpha \zeta^i x_i$ and $\iota : x_i \mapsto \beta
x_{n-i}$ for some $\alpha,\beta \in \Kbar$ and $\zeta \in
\mu_n$. By~(\ref{iota}) we have $\alpha^2=1$. Since $n$ is even it
follows that $M_T^n=1$. This proves the claim.

In exactly the same manner we pick $M'_T \in \Theta_{E'}(\Kbar)$ for
all $T \in E'[n](\Kbar)$. We can then choose $\psi$ so that
$\psi(M'_T) = \pm M_{\sigma(T)}$ for all $T \in E'[n](\Kbar)$. Indeed
if we make this true on a basis for $E'[n]$, then the rest follows
by~(\ref{howmult}) and its analogue for $E'$.

Let $\rho \in \Gal(\Kbar/\K)$.  Since $\iota$ is defined over $\K$, it
follows from~(\ref{iota}) that
\[ \rho(M_T) = \pm M_{\rho(T)} \quad \text{ for all } T \in
E[n](\Kbar). \] Likewise
\[ \rho(M'_T) = \pm M'_{\rho(T)} \quad \text{ for all } T \in
E'[n](\Kbar). \] The cocycle $\rho \mapsto \rho(\psi) \psi^{-1}$ now
takes values in $\Hom(E[n],\mu_2) = E[2]$. Therefore $\nu$ is in the
image of the natural map $H^1(\K,E[2]) \to H^1(\K,E[n])$, and this
proves (i).

It also follows that $\Theta_{E'}$ is $\sigma$-isomorphic to
$\Theta_{E,\nu}$. For the general statement, we choose a
$\Kbar$-isomorphism $\psi'\colon \Theta_{E',\xi'}\to \Theta_{E'}$.
Then the cocycle $\rho\mapsto\rho(\psi\psi')(\psi\psi')^{-1}$
represents the class $\sigma_*(\xi')+\nu$. It follows that
$\Theta_{E',\xi'}$ is $\sigma$-isomorphic to
$\Theta_{E,\sigma_*(\xi')+\nu}$.  This proves (ii).
\end{proof}

\section{Geometry of the Shioda and theta modular surfaces}
\label{sec:shioda}

In this section we give two particular models of the Shioda and theta
modular surfaces of level $4$.  Any other Shioda or theta modular
surface of level $4$ will be a twist of one of these, so these
particular models are convenient for studying the geometry of these
surfaces.

\subsection{The universal elliptic curve of level $4$}
\label{S:E4}

The Legendre form
\[y^2=x(x-1)(x-\lambda)\] provides a family of elliptic curves
$E_\lambda$ with a prescribed isomorphism $(\Z/2\Z\times\mu_2)\to
E_\lambda[2]$. The parameter $\lambda$ gives an isomorphism
$Y(2)\simeq \PP^1\setminus \{0,1,\infty\}$.  Setting $\lambda=
(1-t^2)^2/(1+t^2)^2$ we obtain a family of elliptic curves $E_t$ with
a prescribed isomorphism $(\Z/4\Z\times\mu_4)\to E_t[4]$. The parameter
$t$ gives an isomorphism $Y(4)\simeq \PP^1\setminus \{0,\infty,\pm 1,
\pm i\}$ and the expression for $\lambda$ in terms of $t$ is an
explicit realisation of the map $Y(4)\to Y(2)$.

\subsection{The theta modular surface of level $4$}
\label{S:T4-geom}
The quadric intersection
\begin{equation}
\label{hessefamily}
 D_t\colon \quad \left\{
\begin{aligned}
t(x_0^2 + x_2^2) + 2 x_1 x_3 &= 0 \\
t(x_1^2 + x_3^2) + 2 x_0 x_2 &= 0
\end{aligned} \right\} \subset \PP^3 
\end{equation}
is a $4$-cover of $E_t$. By varying $t$ these curves give a genus $1$
fibration on the surface
\begin{equation}\label{E:quartic_surface}
x_0x_2(x_0^2+x_2^2)-x_1x_3(x_1^2+x_3^2)=0.
\end{equation}

The action of $E_t[4]$ on $D_t$ is generated by the transformations
$x_{\nu} \mapsto x_{\nu+1}$ and $x_{\nu} \mapsto i^\nu x_\nu$, where
we read the subscripts mod $4$.  (One convenient way to check an
automorphism of a genus $1$ curve is geometrically a translation map,
is to check it has no fixed points.) The quadric intersections $D_t$
therefore all have the same theta group, and
so~\eqref{E:quartic_surface} is a model for the theta modular surface
$T(4)$.

The surface $T(4)$ is isomorphic to the Fermat quartic
$\{u_0^4-u_1^4+u_2^4-u_3^4=0\} \subset \PP^3$ via the change of
coordinates
\[(u_0:u_1:u_2:u_3)  =(x_0+x_2:x_0-x_2:x_1-x_3:x_1+x_3).\]

\subsection{Shioda's modular surface of level $4$}
\label{sec:geomS4}

The relative elliptic curve $E_t/Y(4)$ provides us with an open part
of the Shioda modular surface. In order to find a suitable completion,
we construct $E_t$ as a (trivial) $4$-cover of itself. The
intermediate $2$-cover is
\begin{equation}\label{eqn:E}
  C_t\colon \quad Y^2 = XZ(X-Z)(X-\la Z) \quad \text{ with }\lambda=\frac{(1-t^2)^2}{(1+t^2)^2}.
\end{equation}
A small adaptation (needed since $a=e=0$) of the construction in
Section~\ref{S:Dprime_nu} leads to the second $2$-cover given by
\begin{equation}
\label{sixeqns}
\begin{aligned}
(X-Z)(X-\la Z) & =  y_1^2 &&\qquad& (1-\la) X Z & =  y_2^2 \\
X(X-\la Z) & = y_3^2 &&& \la (X-Z) Z & =  y_4^2 \\
(X-\la Z)Z & = y_5^2 &&& X (X-Z) & =  y_6^2 
\end{aligned}
\end{equation}
and 
\[ Y = \frac{1+t^2}{2t} y_1 y_2 = \frac{1+t^2}{1-t^2} y_3 y_4 = y_5
y_6. \] Taking linear combinations to eliminate $X^2, XZ, Z^2$ and $Y$
gives $5$ quadrics that define a degree $8$ curve in $\PP^5$:
\begin{equation}
\label{fivequads}
\D_t\colon\quad \left\{
\begin{aligned}
    y_1^2 + y_4^2 - y_6^2 & = 0 \\
    y_2^2 - y_3^2 + y_6^2 & = 0 \\
    y_2^2 + y_4^2 - y_5^2 & = 0 \\
    (1 - t^2) y_1 y_2 - 2 t y_3 y_4& = 0 \\
    (1 + t^2) y_1 y_2 - 2 t y_5 y_6& = 0 
\end{aligned}
\right\} \subset \PP^5
\end{equation}
The first $3$ quadrics in~\eqref{fivequads} define a surface $S_0
\subset \PP^5$ of degree $8$ with exactly $16$ ordinary double points
as singularities ($8$ defined over $\Q$ and the remaining $8$ over
$\Q(i)$). Each curve $\D_t$ passes through all $16$ singular
points. Furthermore, these are the points with $Y=0$, and so make up
the fibre of $\D_t\to E_t$ above $0_{E_t}$. In particular, $\D_t$ has
a rational point over $0_{E_t}$, so $\D_t \to E_t$ is indeed the
trivial $4$-cover.  Blowing up the singular points gives
Shioda's modular surface $S(4)$.

The action of $E_t[2] \isom \Z/2\Z \times \mu_2$ on $C_t$ is generated
by
\begin{align*}
(X:Z:Y) & \mapsto (X - \lambda Z : X - Z : (\lambda - 1)Y), \\
(X:Z:Y) & \mapsto (\lambda Z : X : -\lambda Y),
\end{align*}
and the action of $E_t[4] \isom \Z/4\Z \times \mu_4$ on $\D_t$ is generated by 
\begin{equation}
\label{act4}
\begin{aligned}
(y_1:y_2:y_3:y_4:y_5:y_6)&\mapsto(-y_2:y_1:-y_3:y_4:-y_6:y_5), \\
(y_1:y_2:y_3:y_4:y_5:y_6)&\mapsto(iy_1:-iy_2:y_4:y_3:y_6:y_5).
\end{aligned}
\end{equation}

The subgroup of $\Pic S(4)$ invariant under the action~\eqref{act4}
was computed in \cite{BH}, and shown to be free of rank $2$ generated
by divisor classes $I$ and $F$, where $F$ is the class of a fibre, and
$2I$ is linearly equivalent to the sum of the $16$ sections (i.e. the
blow-ups of the $16$ singular points on $S_0$).  Our surface $S_0$ is
the image of $S(4)$ under the morphism to $\PP^5$ given by the
complete linear system $|I+F|$.

More generally there is a natural action of $\G =
\ASL(\Z/4\Z\times\mu_4)$ on $S(4)$.  The corresponding automorphisms
of $S_0$ are again given by changes of co-ordinates on $\PP^5$. The
surfaces $S^{\pm}_{E,\xi}(4)$ are twists of $S(4)$ by cocycles taking
values in $\G$, and so each must admit a model in a $5$-dimensional
Brauer-Severi variety. Our calculations in
Sections~\ref{S:twist_shioda} and~\ref{S:rev} show that if
$\ob_{E,4}(\xi) = 0$ then this Brauer-Severi variety is
trivial. Indeed we show how to
 write down a model for
$S^{\pm}_{E,\xi}(4)$ as a complete intersection of quadrics in
$\PP^5$.

\begin{Remark}
\label{get-fib}
Our calculations also give the genus $1$ fibration, but in fact this
may be recovered directly from the equations for the surface.  Indeed,
if we take a complement to the $3$-dimensional space of quadrics
vanishing on the surface, inside the $6$-dimensional space of quadrics
vanishing at the singular points, then this defines a map to $\PP^2$
with image a conic. For example, with $S_0$ as above, the map is given
by $(X_1:X_2:X_3) = (y_1y_2:y_3y_4:y_5y_6)$ and the conic is $X_1^2 +
X_2^2 = X_3^2$.  Parametrising this conic gives the required map to
$\PP^1$.
\end{Remark}

\section{Computing twists of $S(4)$ and $T(4)$ in the direct case}
\label{S:twist_shioda}

In this section we take $\xi \in H^1(k,E[4])$ with $\ob_{E,4}(\xi)=0$
and compute models for $S^+_{E,\xi}(4)$ and $T^+_{E,\xi}(4)$.

\subsection{The twisted universal elliptic curve of level $4$}
Let $E/\K$ be the elliptic curve $y^2 = f(x) = x^3 + \wA x + \wB$.  If
$P = (x_P,y_P)$ is a point on $E$ then the $x$-coordinate of $2P$ is
$\gamma(x_P)$ where
\[ \gamma(x) = (x^4 - 2 \wA x^2 - 8 \wB x + \wA^2)/(4 (x^3 + \wA x +
\wB)). \] The elliptic curves directly $n$-congruent to $E$ are
parametrised by the non-cuspidal points of $X^+_E(n)$. This is a twist
of the usual modular curve $X(n)$.

\begin{Lemma}
\label{lem:4congr}
The elliptic curves directly $4$-congruent to $E$ are
\[ E_t : \quad y^2 = - f(t) (x - \gamma(t)) (x^3 + \wA x + \wB) \]
with base point $(x,y) = (\gamma(t),0)$, where $t$ is a co-ordinate on
$X_E^+(4) \isom \PP^1$, and the original curve $E$ corresponds to $t =
\infty$.  Moreover the cusps of $X_E^+(4)$
are the roots of $d(t)=0$ where
\[ d(x) = x^6 + 5 \wA x^4 + 20 \wB x^3 - 5 \wA^2 x^2 - 4 \wA \wB x -
\wA^3 - 8 \wB^2 \] is the $4$-division polynomial of $E$ (divided by
$2$).
\end{Lemma}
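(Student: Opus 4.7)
The plan is to prove the lemma in two steps: first identify the cusps via an explicit identity involving $d(t)$, then verify that $E_t$ realises the universal direct $4$-congruence.

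For the cusps, the key identity I will establish is
\[
f(\gamma(t)) = \frac{d(t)^2}{64 f(t)^3}.
\]
My plan is to derive this from the duplication formula on $E$: writing $P = (t,s)$ with $s^2 = f(t)$ on the generic fibre, doubling gives $x(2P)=\gamma(t)$ and $2s\cdot y(2P) = (3t^2+\wA)(t - \gamma(t)) - 2f(t)$. Using $4f(t)(t - \gamma(t)) = 4tf(t) - \phi(t)$ with $\phi(t) = t^4 - 2\wA t^2 - 8\wB t + \wA^2$, I expect a direct polynomial expansion to give
\[
(3t^2+\wA)(4tf(t) - \phi(t)) - 8 f(t)^2 = d(t),
\]
whence $y(2P)^2 = d(t)^2/(64 f(t)^3)$; combined with $y(2P)^2 = f(\gamma(t))$, the identity follows. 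From it, the quartic $-f(t)(x-\gamma(t))f(x)$ defining $E_t$ has a repeated root iff $\gamma(t)$ coincides with a root of $f$, iff $f(\gamma(t)) = 0$, iff $d(t) = 0$, assuming $f(t) \neq 0$. The apparent issue at $f(t)=0$ is handled by rewriting $E_t$ as $y^2 = (\phi(t)/4 - xf(t))f(x)$: the specialisation at $f(t)=0$ is $y^2 = (\phi(t)/4)f(x)$, a smooth quadratic twist of $E$ by $\phi(t)$, hence not a cusp. Since $\deg d = 6$, this matches the known number of cusps of $X(4)$.

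For the parametrisation, the plan is to use that $X_E^+(4)$ is a twist of $X(4) \isom \PP^1$ with a rational point (the tautological one corresponding to $E$ itself), so $X_E^+(4) \isom \PP^1$ over $\K$. First, I will check that the tautological point lies at $t = \infty$ by rescaling $t = 1/u$, $y = Y/(2u^2)$ and computing the limit $u \to 0$ of $u^4 \cdot E_{1/u}$, which should be $Y^2 = f(x)$, recovering $E$. Next, I will exhibit a direct $4$-congruence $\sigma_t\colon E[4] \to E_t[4]$ over $\K(t)$. The three nontrivial $2$-torsion points of $E_t$ have $x$-coordinates equal to the roots $\alpha_i$ of $f$, giving a canonical isomorphism $E_t[2] \isom E[2]$ (automatically direct, since $\Aut(\mu_2)$ is trivial). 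To lift this to $4$-torsion preserving $e_4$, I will compare generically with the Legendre universal family of Section~\ref{S:E4} via an explicit change of variables, or alternatively appeal to the invariant-theoretic constructions of Section~\ref{S:twist_shioda}. Distinctness of the fibres for distinct $t$ then forces $\PP^1_t \to X_E^+(4)$ to be an isomorphism.

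The main obstacle will be the explicit $4$-torsion lift of the canonical $2$-congruence together with the verification that it is direct. A useful sanity check is to compute $j(E_t)$ from the classical invariants $I,J$ of the defining binary quartic and verify it realises the $j$-invariant function of the forgetful map $X_E^+(4) \to X(1)$.
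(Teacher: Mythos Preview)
The paper's proof is quite different from yours: it simply puts $E_t$ in Weierstrass form and matches it (via an explicit fractional linear change of parameter) with Silverberg's family from \cite{Silverberg}*{Theorem~4.1}, then cites \cite{BD}*{Proposition~7.2} for the version stated here and for the degenerate $j$-values. Your approach is more self-contained, and the cusp analysis via the identity $f(\gamma(t))=d(t)^2/(64f(t)^3)$ is correct and pleasant; the polynomial identity $(3t^2+\wA)(4tf(t)-\phi(t))-8f(t)^2=d(t)$ does hold, and your treatment of the points with $f(t)=0$ is fine once one notes that $d(t_0)=-(3t_0^2+\wA)\phi(t_0)\neq 0$ there.

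The genuine gap is precisely the step you flag as the main obstacle: you do not actually construct the direct $4$-congruence $\sigma_t$. The canonical identification $E_t[2]\isom E[2]$ is clear, but lifting it to a Galois-equivariant symplectic isomorphism on $4$-torsion is the entire content of the lemma, and neither of your suggested routes is carried out. Comparing with the Legendre family only works over $\Kbar$ and you would still have to verify descent; appealing to the invariant theory of Section~\ref{S:twist_shioda} is exactly what the paper does in Remark~\ref{xe4viainvthy}(i), but that argument requires the input from \cite{g1hessians} that the Hessian pencil has constant theta group, which you have not invoked. Your final ``distinctness'' step is also underspecified: to conclude that $\PP^1_t\to X_E^+(4)$ is an isomorphism you need the moduli map to be nonconstant (e.g.\ because $j(E_t)$ is nonconstant), and injectivity of $t\mapsto (E_t,\sigma_t)$ is not obvious without knowing $\sigma_t$ explicitly. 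As it stands, the proposal proves the cusp statement but not the parametrisation.
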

\begin{proof}
  Putting $E_t$ in Weierstrass form, and make a change of co-ordinates
  on $X^+_E(4)$, gives the family of curves computed by Silverberg.
  Indeed our parameter $t$ on $X^+_E(4)$ and the parameter $t$, here
  denoted by $t_\text{Silverberg}$, in \cite{Silverberg}*{Theorem 4.1}
  are related by
  \[ t = \frac{-(4 \wA^3 + 27 \wB^2)}{18 \wA \wB
    t_{\text{Silverberg}}} - \frac{3 \wB}{2 \wA}. \] An alternative
  proof, also treating the cases $j(E)=0,1728$, and leading to the
  lemma as stated here, is given in \cite{BD}*{Proposition 7.2}.  See
  also Remark~\ref{xe4viainvthy}(i).
\end{proof}

\subsection{The twisted theta modular surface of level $4$}
Let $D = \{ Q_1 = Q_2 = 0 \} \subset \PP^3$ be a quadric intersection
with Jacobian $E$. Then $D=D_{E,\xi}$ for some $\xi\in H^1(\K,E[4])$
with $\ob_{E,4}(\xi)=0$. We use invariant theory to compute a model
for $T^+_{E,\xi}(4)$ as a quartic surface in $\PP^3$, together with
its genus $1$ fibration over $X^+_E(4)$.

We identify the quadratic forms $Q_1$ and $Q_2$ with $4 \times 4$
symmetric matrices $A_1$ and $A_2$ via $Q_i(x_1, \ldots, x_4) =
\tfrac{1}{2} \x^T A_i \x$.  We then define $G(X,Z)$ by
\begin{equation}
\label{getint}
G(X,Z) = \det(XA_1+ZA_2)  =  a X^4 + b X^3 Z + c X^2 Z^2 + d X Z^3+e Z^4,
\end{equation} 
so that $D$ is a $2$-cover of $C: Y^2 = G(X,Z)$. As in
Section~\ref{sec:4cover}, we assume $ae \not=0$ and let $F =
\K[\theta]$ where $\theta$ is a root of $g(x) = G(x,1)$.

Let $T_1,T_2 \in k[\x]$ be the quadratic forms defined in
Section~\ref{S:24trivial}.  The Hessian, as defined in
\cite{g1hessians}, is an $\SL_2 \times \SL_4$-equivariant map from the
space of quadric intersections to itself. It is given by
\[ (Q_1, Q_2) \mapsto (Q'_1,Q'_2) = ( 6 T_2 - c Q_1 - 3 b Q_2 , 6 T_1
- c Q_2 - 3 d Q_1 ). \]

\begin{Lemma}
\label{prop:int2}
The quadric intersection
\begin{equation}
\label{Ds}
 D_t  \colon \quad  \{ 12 t Q_1 + Q'_1 = 12 t Q_2 + Q'_2 = 0 \} \subset \PP^3
\end{equation}
has intermediate $2$-cover (after cancelling a factor $12^4$)
\[ C_t \colon \quad
 Y^2 = G_t(X,Z) = a N_{F/\K} \big((t+\mu)X-(\theta t + \lambda)Z\big), \]
where 
\begin{equation}
\label{lamu}
\begin{aligned}
\lambda &=  (6 g'(\theta) - \theta g''(\theta))/24 
  = -(c \theta^2 + 3 d \theta + 6 e)/(12\theta), \\
\mu &=  - g''(\theta)/24 = -(6 a \theta^2 + 3 b \theta + c)/12.
\end{aligned}
\end{equation}
\end{Lemma}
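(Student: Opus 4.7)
My plan is a direct determinant computation over $F = k[\theta]$, with the rank-$1$ form $\Xi$ of~\eqref{rank1} as the main tool.

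Writing $A_i^{(t)}$ for the symmetric matrix of $12tQ_i + Q_i'$, the intermediate $2$-cover of $D_t$ is $Y^2 = \det(XA_1^{(t)} + ZA_2^{(t)})$, so the task is to identify this quartic in $(X,Z)$ (up to the scalar $12^4$) with $a\,N_{F/k}\bigl((t+\mu)X - (\theta t+\lambda)Z\bigr)$. Expanding the Hessian definition gives
\[
 XA_1^{(t)} + ZA_2^{(t)} \;=\; uA_1 + vA_2 + 6\,(XB_2 + ZB_1),
\]
where $u = (12t-c)X - 3dZ$ and $v = -3bX + (12t-c)Z$. The key claim is that, viewed in $F[t,X,Z]$, this matrix pencil becomes singular on the $F$-line $(t+\mu)X - (\theta t+\lambda)Z = 0$.

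The heart of the proof is to verify this singularity. The natural route is to substitute $(X,Z) = (\theta t + \lambda,\, t+\mu)$ and exhibit a nonzero kernel vector for the resulting matrix over $F[t]$. The rank-$1$ form $\Xi = \theta^{-1}eQ_1 + T_1 - \theta T_2 + \theta^2 aQ_2$ from~\eqref{rank1} gives a rank-$1$ matrix $M_\Xi = \theta^{-1}eA_1 + B_1 - \theta B_2 + \theta^2 aA_2$ with a three-dimensional kernel over $F$; the formulas~\eqref{lamu} for $\lambda$ and $\mu$ together with $g(\theta) = 0$ are engineered precisely so that the specialised matrix lies in a low-rank subspace containing $M_\Xi$ and therefore shares nontrivial kernel. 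Equivalently, one can directly match the coefficients of $A_1,A_2,B_1,B_2$ on the specialised side against a scalar multiple of $M_\Xi$, reducing high powers of $\theta$ via $g(\theta)=0$; either way the calculation shows that $(t+\mu)X - (\theta t+\lambda)Z$ divides $\det(XA_1^{(t)} + ZA_2^{(t)})$ in $F[t,X,Z]$.

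Once this divisibility is established, applying $\Gal(\bar k/k)$ and using that the determinant is defined over $k$ yields divisibility by $N_{F/k}\bigl((t+\mu)X - (\theta t+\lambda)Z\bigr)$ in $k[t,X,Z]$. Both sides have degree $4$ in $(X,Z)$ with coefficients in $k[t]$, so the quotient is some $C(t) \in k[t]$. Comparing $X^4$ coefficients reduces to the identity $\det(A_1^{(t)}) = 12^4 a\,N_{F/k}(t+\mu)$, which pins down $C(t) = 12^4 a$; cancelling $12^4$ then gives the stated $G_t(X,Z)$.

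The main obstacle is the algebraic verification in the middle paragraph: showing that the explicit $\lambda,\mu$ make the specialised matrix land in the span of $M_\Xi$ (together with at most one further rank-$1$ correction). This is where the reduction modulo $g(\theta)$ is delicate. After this step, the norm-divisibility and leading-coefficient comparison are routine.
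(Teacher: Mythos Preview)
Your overall strategy---show that the linear form $(t+\mu)X-(\theta t+\lambda)Z$ divides the determinant over $F$, take the norm, and compare leading coefficients---is sound and would give a valid proof. The norm and leading-coefficient steps are fine (note that once divisibility is established the quotient is forced to be constant in both $(X,Z)$ and $t$ by degree counting, so a single coefficient comparison suffices).

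However, the key singularity step has a genuine gap, and your proposed mechanism via $M_\Xi$ cannot work as written. You claim the specialised matrix ``lies in a low-rank subspace containing $M_\Xi$'' or can be matched against ``a scalar multiple of $M_\Xi$ (together with at most one further rank-$1$ correction)''. But the specialised matrix generically has rank~$3$: in the diagonal case $Q_1=\sum\xi_i x_i^2$, $Q_2=-\sum\xi_i\theta_i x_i^2$, the matrix $XA_1^{(t)}+ZA_2^{(t)}$ is diagonal with $i$-th entry $24\xi_i\bigl((t+\mu_i)X-(\theta_i t+\lambda_i)Z\bigr)$, and specialising at $(X,Z)=(\theta_1 t+\lambda_1,\,t+\mu_1)$ kills only the first entry. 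A rank-$3$ matrix cannot lie in the span of two rank-$1$ matrices. What \emph{is} true in this diagonal picture is that the one-dimensional \emph{image} of $M_\Xi$ coincides with the kernel line of the specialised matrix; turning that observation into a clean proof for general $(Q_1,Q_2)$ still requires you to control how $B_1,B_2$ act on the kernel vector of $\theta A_1+A_2$, and this is not addressed.

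The paper bypasses this entirely. It observes that the desired identity is polynomial in the coefficients of $Q_1,Q_2$, so it suffices to verify it on a Zariski-dense set; taking the simultaneously diagonal pairs $Q_1=\sum\xi_i x_i^2$, $Q_2=-\sum\xi_i\theta_i x_i^2$ makes all of $A_1,A_2,A_1',A_2'$ diagonal (indeed $A_1'=\operatorname{diag}(24\xi_i\mu_i)$ and $A_2'=\operatorname{diag}(-24\xi_i\lambda_i)$), after which the determinant is simply the product of the diagonal entries and the formula drops out. Your route is more conceptual but, as it stands, would likely need the same diagonal verification to close the gap---at which point the paper's argument is the shorter path.
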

\begin{proof}
  In principle this may be checked by a generic calculation.  To make
  the calculation practical we consider the case $Q_1 = \sum_{i=1}^4
  \xi_i x_i^2$ and $Q_2 = -\sum_{i=1}^4 \xi_i \theta_i x_i^2$.  Then
  $g(X) = 2^4 (\prod_{i=1}^4 \xi_i) \prod_{i=1}^4(X - \theta_i)$, and
  $(Q_1,Q_2)$ has Hessian $(Q'_1,Q'_2)$ where $Q'_1 = 12\sum_{i=1}^4
  \xi_i \mu_i x_i^2$ and $Q'_2 = -12\sum_{i=1}^4 \xi_i \lambda_i
  x_i^2$.  Computing the intermediate $2$-cover, by the method used
  in~\eqref{getint}, gives the equation for $C_t$ as stated.
\end{proof}

\begin{Corollary}
\label{C:theta_surface_equation}
Let $\xi\in H^1(k,E[4])$ with $D_{E,\xi} =
\left\{Q_1=Q_2=0\right\}\subset\PP^3$.  Then the surface
$T^+_{E,\xi}(4)$ has a model
\[\{Q_1Q'_2-Q_2Q'_1=0\}\subset\PP^3\] with genus $1$ fibration $D_t$
as given in Lemma~\ref{prop:int2}.
\end{Corollary}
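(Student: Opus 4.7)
The strategy is to identify the quartic surface $\Sigma := \{Q_1 Q'_2 - Q_2 Q'_1 = 0\} \subset \PP^3$, equipped with the pencil of curves from Lemma~\ref{prop:int2}, with the theta modular surface $T^+_{E,\xi}(4)$. First one checks the set-theoretic identity $\Sigma = \bigcup_t D_t$: a point $x \in \PP^3$ lies on $\Sigma$ precisely when the vectors $(Q_1(x), Q_2(x))$ and $(Q'_1(x), Q'_2(x))$ are $\K$-linearly dependent. Whenever $(Q_1(x), Q_2(x)) \neq 0$, this forces $(Q'_1(x), Q'_2(x)) = -12 t (Q_1(x), Q_2(x))$ for a unique $t \in \PP^1$, placing $x$ on $D_t$; conversely every $D_t$ is contained in $\Sigma$. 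So $\Sigma \to \PP^1$, given generically by $t = -Q'_1/(12 Q_1)$, is the desired fibration with fibre $D_t$.

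Now $T^+_{E,\xi}(4)$ is by definition the minimal fibred surface over $X^+_E(4)$ whose generic fibre is the $4$-cover $D_{E_t,\xi_t} \to E_t$, where $\Theta_{E_t,\xi_t}$ is the base change of $\Theta_{E,\xi}$ to $\K_t$. To match the two I would establish two compatibilities. The first is a \emph{theta group match}: by the $\SL_2 \times \SL_4$-equivariance of the Hessian covariant from \cite{g1hessians}, any $g \in \GL_4$ preserving the pencil $\langle Q_1, Q_2 \rangle$ via some $M \in \GL_2$ automatically preserves the Hessian pencil $\langle Q'_1, Q'_2 \rangle$ via the \emph{same} $M$. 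Applied to elements of $\Theta_{E,\xi} \subset \GL_4$ (which stabilise $D = D_\infty$ and hence its defining pencil), the identity $g \cdot (12 t Q_i + Q'_i) = \sum_j m_{ji} (12 t Q_j + Q'_j)$ shows that $\Theta_{E,\xi}$ preserves the pencil $\langle 12 t Q_1 + Q'_1,\,12 t Q_2 + Q'_2 \rangle$ for every $t$, and hence acts on each $D_t$ without changing the value of $t$. This endows the generic fibre $D_t$ with a theta group canonically isomorphic to $\Theta_{E,\xi} \otimes_\K \K(t)$, as required.

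The second is the \emph{moduli identification}: one must show that the parameter $t$ on $\Sigma \to \PP^1$ agrees with the parameter of Lemma~\ref{lem:4congr}, i.e.\ $\Jac(D_t) = E_t$. By Lemma~\ref{prop:int2} the intermediate $2$-cover of $D_t$ is $Y^2 = G_t(X,Z)$ with $G_t = a\,N_{F/\K}\bigl((t+\mu)X - (\theta t + \lambda)Z\bigr)$, and its Jacobian is determined by the classical invariants $I(G_t), J(G_t)$ from~\eqref{invIJ}. I would expand this norm using the expressions~\eqref{lamu} for $\lambda, \mu$, compute $I(G_t)$ and $J(G_t)$ as polynomials in $t$, and compare the resulting Weierstrass equation with the family of Lemma~\ref{lem:4congr}. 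I expect this to be the main obstacle: the calculation is symbolic but heavy. As in the proof of Lemma~\ref{prop:int2}, it simplifies considerably by reducing to the diagonal case $Q_1 = \sum \xi_i x_i^2$, $Q_2 = -\sum \xi_i \theta_i x_i^2$, where the roots of $g(x)$ are explicit and both sides expand as symmetric functions in those roots (a computer algebra verification is appropriate here). Combined with the theta group match, this identifies $\Sigma$, with its fibration by the $D_t$, as a model for $T^+_{E,\xi}(4)$.
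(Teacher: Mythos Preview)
Your proposal is correct and matches the paper's (implicit) argument: the Corollary is stated without a separate proof, relying on Lemma~\ref{prop:int2} for the fibration, on the $\SL_2\times\SL_4$-equivariance of the Hessian for the constancy of the theta group, and on \cite{g1hessians} (recorded in Remark~\ref{xe4viainvthy}(i)) for the Jacobian identification $\Jac(D_t)=E_t$. For the step you flag as the main obstacle, note that Remark~\ref{xe4viainvthy}(ii) gives $G_t = f(t)\bigl(4\gamma(t)\,G + \tfrac{1}{12}H\bigr)$, from which $I(G_t),J(G_t)$ follow immediately via the classical syzygies for the pencil $sG+rH$, so the direct norm expansion you propose is unnecessary.
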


Let $I$ and $J$ be the invariants~(\ref{invIJ}) of the binary quartic
$G(X,Z)$. Then $E$ has Weierstrass equation $y^2 = x^3 + \wA x + \wB$
where $\wA = -I/48$ and $\wB =-J/1728$.  Let $E_t$ be the family of
elliptic curves directly $4$-congruent to $E$, as given in
Lemma~\ref{lem:4congr}.

\begin{Remark} 
\label{xe4viainvthy}
(i) The genus $1$ curves $C_t$ and $D_t$ have Jacobian $E_t$. As observed
in \cite{g1hessians}, this gives an alternative proof of
Lemma~\ref{lem:4congr}. \\
(ii) The family of quartics $G_t(X,Z)$ has constant (meaning
  independent of $t$) level $2$ theta group. It should therefore be
  possible to write $G_t(X,Z)$ as a linear combination of the binary
  quartic $G(X,Z)$ and its Hessian
  \begin{align*} H(X,Z) = (8 a c - 3 b^2) & X^4 + (24 a d - 4 b c) X^3
    Z + (48 a e + 6 b d - 4 c^2) X^2 Z^2 \\ & + (24 b e - 4 c d) X Z^3
    + (8 c e - 3 d^2) Z^4.
\end{align*}
We find that $G_t(X,Z) = (t^3 + \wA t + \wB) \big(4 \gamma(t) G(X,Z) +
\tfrac{1}{12} H(X,Z) \big)$.
\end{Remark}

We can also use Proposition~\ref{P:4cov_trivob} to describe the family
of $4$-covers $D_t$.  Let $\alpha \in F^\times/F^{\times2}\K^\times$
such that $D = D_\alpha$.  We may compute $\alpha$ from $D$ by
evaluating the rank $1$ quadratic form~\eqref{rank1} at any point $\x
\in k^4$ where it is non-zero (in each constituent field of $F$).

\begin{Theorem}
\label{thm1}
The family of $2$-covers $D_t \to C_t$ such that each $4$-cover $D_t
\to E_t$ has the same theta group as our original $4$-cover $D \to E$,
is obtained by the procedure in Section~\ref{S:24trivial}, starting in
place of~\eqref{oldstart} with
\begin{equation}
\label{Dt-alt}
\alpha \big((t+\mu)X-(\theta t + \lambda)Z\big)
=(x_1+x_2\theta+x_3\theta^2+x_4\theta^3)^2.
\end{equation}
\end{Theorem}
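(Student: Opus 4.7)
The plan is to verify directly that the procedure of Section~\ref{S:24trivial} applied to equation~\eqref{Dt-alt} reproduces the quadric-intersection fibration $D_t = \{12tQ_1 + Q_1' = 12tQ_2 + Q_2' = 0\}$ constructed in Lemma~\ref{prop:int2}. Once this identification is made, Corollary~\ref{C:theta_surface_equation} guarantees that $D_t$ is the genus~$1$ fibration on $T^+_{E,\xi}(4)$, whose generic fibre is by definition a $4$-cover of $E_t$ with the same theta group as $D \to E$, which is what is asserted.

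The first step is to check that~\eqref{Dt-alt} is well-posed as input to the procedure. Writing $(t+\mu)X - (\theta t + \lambda)Z = (t+\mu)(X - \theta_t Z)$ with $\theta_t := (\theta t + \lambda)/(t+\mu) \in F$, one verifies that $\theta_t$ is a root of $g_t(x) = G_t(x,1)$. Taking the norm $N_{F/k}$ of both sides of~\eqref{Dt-alt} and using $N_{F/k}(\alpha) = ar^2$ yields the intermediate $2$-cover $Y^2 = a\,N_{F/k}((t+\mu)X - (\theta t + \lambda)Z) = G_t(X,Z)$ of Lemma~\ref{prop:int2}, with $rY = N_{F/k}(x_1 + x_2\theta + x_3\theta^2 + x_4\theta^3)$ independent of $t$. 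In particular, for each $t$ the procedure produces a bona fide $4$-cover of $C_t$ with trivial obstruction.

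The core computation is then the elimination of $X$ and $Z$ from the four equations obtained by expanding~\eqref{Dt-alt} in powers of $\theta$. I would exploit the linear decomposition
\[
\alpha\bigl((t+\mu)X - (\theta t + \lambda)Z\bigr) = t\cdot\alpha(X - \theta Z) + \alpha(\mu X - \lambda Z),
\]
which gives four equations of the form $tL_i(X,Z) + L_i'(X,Z) = S_i(x_1,\ldots,x_4)$. At $t = \infty$ the system collapses to the original one producing $\{Q_1 = Q_2 = 0\}$, and the claim to verify is that eliminating $X,Z$ at generic $t$ yields the pencil $\{12tQ_i + Q_i' = 0\}_{i=1,2}$. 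I would establish this by reducing to the diagonal normal form $Q_1 = \sum\xi_ix_i^2$, $Q_2 = -\sum\xi_i\theta_ix_i^2$ used in the proof of Lemma~\ref{prop:int2}: equations~\eqref{Dt-alt} then decouple into four scalar equations $\alpha_i((t+\mu_i)X - (\theta_it+\lambda_i)Z) = \ell_i^2$ with $\ell_i = x_1+x_2\theta_i+x_3\theta_i^2+x_4\theta_i^3$, and eliminating $X,Z$ from any two of them and substituting into the remaining two yields two residual quadric relations of the desired form.

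The hard part will be the bookkeeping in this diagonal calculation, in particular identifying $\alpha = (\alpha_i)$ for the diagonal $D$ from the rank-one form~\eqref{rank1} and tracking the factor of $12$ arising from the Hessian normalisation in Lemma~\ref{prop:int2}. Once the diagonal case is settled, the general case follows by the $\SL_2 \times \SL_4$-equivariance of the entire construction under changes of basis in $\PP^3$.
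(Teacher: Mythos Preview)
Your proposal is correct and follows the same overall strategy as the paper: identify the quadric intersection produced by~\eqref{Dt-alt} with the pencil $\{12tQ_i + Q_i' = 0\}$ of Lemma~\ref{prop:int2}, then appeal to Corollary~\ref{C:theta_surface_equation}. The difference is purely in how the computation is organised. You expand~\eqref{Dt-alt} in powers of $\theta$, eliminate $X,Z$ by hand, and reduce to the diagonal normal form to match the Hessian. The paper bypasses the elimination step by using the trace-form description~\eqref{traceq}: writing~\eqref{Dt-alt} as $\alpha(t+\mu)(X-\theta_t Z)=x^2$ with $\theta_t=(\theta t+\lambda)/(t+\mu)$, the two defining quadrics are immediately
\[
\tr_{F/k}\!\left(\frac{x^2}{\alpha(t+\mu)g_t'(\theta_t)}\right)
= \tr_{F/k}\!\left(\frac{\theta_t\,x^2}{\alpha(t+\mu)g_t'(\theta_t)}\right)=0,
\]
and the single identity $g_t'(\theta_t)=g'(\theta)\,d(t)/(t+\mu)^2$ rewrites these as
\[
\tr_{F/k}\!\left(\frac{(t+\mu)x^2}{\alpha g'(\theta)}\right)
= \tr_{F/k}\!\left(\frac{(\theta t+\lambda)x^2}{\alpha g'(\theta)}\right)=0,
\]
which is manifestly $t\cdot(\text{fibre at }\infty)+(\text{fibre at }0)$. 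The diagonal Hessian formula from the proof of Lemma~\ref{prop:int2} then identifies the fibre at $0$ with the Hessian of the fibre at $\infty$, and the latter is the original $D=D_\alpha$. Your route reaches the same endpoint but carries more bookkeeping; the paper's derivative identity absorbs both the elimination and the normalisation in one stroke.
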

\begin{proof}
  We show that the family of curves $D_t$ is identical to that
  considered in Lemma~\ref{prop:int2}.  In particular, the theta
  groups are not just constant up to isomorphism, but constant as
  subschemes of $\GL_4$.  The quartic $g_t(x) = G_t(x,1)$ has root
  $\theta_t = (\theta t + \la)/(t+\mu)$. Following~\eqref{traceq}, the
  quadric intersection obtained from~\eqref{Dt-alt} is
  \[ \tr_{F/k} \left( \frac{x^2}{\alpha(t+\mu) g'_t(\theta_t)} \right)
  = \tr_{F/k} \left( \frac{\theta_t x^2}{\alpha(t+\mu) g'_t(\theta_t)}
  \right) = 0, \] where $x = x_1+x_2\theta+x_3\theta^2+x_4\theta^3$.
  Since
\begin{equation}
\label{deriv-id}
g'_t(\theta_t) = g'(\theta) d(t) / (t+\mu)^2 
\end{equation}
this may be re-written as
\[ \tr_{F/k} \left( \frac{(t+\mu)x^2 }{\alpha g'(\theta)} \right) =
\tr_{F/k} \left( \frac{(\theta t + \lambda)x^2 }{\alpha g'(\theta)}
\right) = 0. \] The formula for the Hessian used in the proof of
Lemma~\ref{prop:int2} now shows that the fibre with $t=\infty$ has
Hessian the fibre with $t=0$. The proof is completed by noting that
the fibre with $t= \infty$ is our original $4$-cover $D=D_\alpha$.
\end{proof}

\begin{Remark}
\label{getquartic}
Expanding~\eqref{Dt-alt} in powers of $\theta$ gives $4$ equations of
the form
\[ \text{(linear form in $X$, $Z$, $tX$, $tZ$)} = \text{(quadratic
  form in $x_1, \ldots, x_4$)}. \] Eliminating $X,Z,t$, we obtain a
single quartic equation in $x_1, \ldots ,x_4$, describing the twist of
\eqref{E:quartic_surface} that has $D_t$ as fibres. This provides an
alternative way of arriving at the equation for $T^+_{E,\xi}(4)$ in
Corollary~\ref{C:theta_surface_equation}.
\end{Remark}

\subsection{The twisted Shioda modular surface of level 4}
\label{sec:twisted_shioda}
As in Section~\ref{S:Dprime_nu}, let $L = \K[\varphi]$ where $\varphi$
is a root of $x^3 + \wA x + \wB = 0$, and write $\sigma$ for the
involution of $LF$ swapping $\theta$ and $\tilde{\theta}$.  We write
$\tilde{\la} = \sigma(\la)$ and $\tilde{\mu} = \sigma(\mu)$.  By the
construction in Section~\ref{S:Dprime_nu}, alternative equations for
the curves $D_t$ in Theorem~\ref{thm1} are given by
\begin{equation*}
  \alpha \tilde{\alpha} \big( (t + \mu) X - (\theta t +   \la) Z \big)
  \big( (t + \tilde{\mu}) X - (\tilde{\theta} t + \tilde{\la}) Z \big)
  = (y_1 m_1 + \ldots + y_6 m_6)^2. 
\end{equation*}

We now modify this by introducing a factor $\nu(t)$ representing the
shift from Theorem~\ref{T:theta_shift}.

\begin{Theorem}
\label{thm2}
The family of $2$-covers $\D_t \to C_t$
such that each $4$-cover $\D_t \to E_t$ has the same fibre above $0$
(as an $E[4]$-torsor) as our original $4$-cover $D \to E$, is obtained
by the procedure in Section~\ref{S:Dprime_nu}, starting in place
of~\eqref{start} with
\begin{equation}
\label{keyeqnS4}
\alpha \tilde{\alpha} \big( (t + \mu) X - (\theta t +   \la) Z \big)
\big( (t + \tilde{\mu}) X - (\tilde{\theta} t + \tilde{\la}) Z \big)  = 
\nu(t)
(y_1 m_1 + \ldots + y_6 m_6)^2  
\end{equation}
where
\begin{equation}
\label{eqn:nu}
\nu(t) = \frac{ d(t) } {t^2 - 2 t \varphi - 2 \varphi^2 - \wA }. 
\end{equation}
\end{Theorem}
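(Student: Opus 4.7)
The plan is to interpret the replacement of $1$ by $\nu(t)$ in~\eqref{keyeqnS4} as a twist of the theta-family second $2$-cover from Theorem~\ref{thm1} by an element of $H^1(k(t), E_t[2])$, and then to identify that element with the shift supplied by Theorem~\ref{T:theta_shift}. Concretely, I would invoke Proposition~\ref{P:4cov} over the function field $k(t)$ of $X^+_E(4)$ applied to $C_t$: every second $2$-cover of $C_t$ arises from the procedure of Section~\ref{S:Dprime_nu} by multiplying the right-hand side of~\eqref{start} by some $\mu(t) \in L_{k(t)}^\times$ whose norm to $k(t)^\times$ is a square, and the case $\mu(t)=1$ reproduces the theta-family of Theorem~\ref{thm1} (by Remark~\ref{rems1}(ii)). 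By Theorem~\ref{T:theta_shift} applied to the universal $4$-congruence $\sigma_t\colon E \times_k k(t) \to E_t$, the generic fibres of $S^+_{E,\xi}(4)$ and $T^+_{E,\xi}(4)$ differ by a $2$-torsion class in $H^1(k(t), E_t[2])$ that depends only on $\sigma_t$ and not on $\xi$. Thus it suffices to show that $\nu(t)$ represents this shift.

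Next I would verify that~\eqref{eqn:nu} lies in $L[t]$ and has square norm. Solving $\gamma(\psi)=\varphi$ over $L$ shows that the two $x$-coordinates of $4$-torsion points of $E$ that double to the $2$-torsion point with $x$-coordinate $\varphi$ satisfy $\psi_++\psi_-=2\varphi$ and $\psi_+\psi_-=-\wA-2\varphi^2$. Hence $t^2-2t\varphi-2\varphi^2-\wA$ is exactly the quadratic factor of $d(t)$ in $L[t]$ corresponding to this pair of roots, so $\nu(t) \in L[t]$ is a polynomial of degree $4$. Multiplying over the three conjugates of $\varphi$ in $L$ gives $N_{L/k}(\nu(t)) = d(t)^2$, a square in $k(t)^\times$, so $\nu(t)$ represents a well-defined class in $\ker\bigl(N_{L/k}\colon L^\times/L^{\times 2}k^\times\to k^\times/k^{\times 2}\bigr)$.

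The substantive step is then to identify the class of $\nu(t)$ with the shift. Since both depend only on $\sigma_t$ and not on $\xi$, I would reduce to a universal calculation using the explicit models $S(4)$ and $T(4)$ from Section~\ref{sec:shioda}: the generic fibres of the two surfaces and the explicit $E[4]$-action~\eqref{act4} on $\D_t$ make the shift accessible. Any representative of the shift, regarded as a function on $X^+_E(4)\cong\PP^1$, has divisor supported on the cusps (the roots of $d(t)$); I would compute which of these cusps contribute odd multiplicity by tracking how the $E[4]$-torsor structure degenerates as $t$ approaches a cusp, using the geometry of the $16$ sections of $S_0$ described in Section~\ref{sec:geomS4} together with Lemma~\ref{lem:4congr}.

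The main obstacle I anticipate is precisely this final matching: showing that among the square-free divisors of $d(t)$ in $L[t]$, exactly the quadratic factor $t^2-2t\varphi-2\varphi^2-\wA$ appears to odd order, as opposed to the complementary quartic factor, or some product thereof. I would pin this down by specialising at a convenient non-cuspidal value of $t$ (or at $t=\infty$, where $E_t=E$ and the shift must be trivial) and verifying the parity there; combined with the divisor analysis above and Remark~\ref{rems1}(iv) (which accounts for the possible multiplication by the class $z$ coming from $C_t\to E_t$), this pins down $\nu(t)$ modulo $L^{\times 2}k^\times$ and completes the identification.
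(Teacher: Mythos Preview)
Your overall framing is sound: the norm identity $N_{L/k}(t^2-2t\varphi-2\varphi^2-A)=d(t)$ is exactly the first step in the paper, and your interpretation of $\nu(t)$ as the twist relating the theta family of Theorem~\ref{thm1} to the Shioda family is correct. But the route you propose for the identification---computing the abstract shift of Theorem~\ref{T:theta_shift} by a divisor analysis at the cusps and then matching it to $\nu(t)$---is both harder and different from what the paper does.

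The paper never computes the shift and then compares; instead it verifies \emph{directly} that inserting this particular $\nu(t)$ makes the fibre above $0$ constant. The key algebraic identity is
\[
(t+\tilde{\mu})(\theta t+\lambda)-(t+\mu)(\tilde{\theta}t+\tilde{\lambda})
=(\theta-\tilde{\theta})\,(t^2-2\varphi t-2\varphi^2-A),
\]
obtained from~\eqref{invIJ},~\eqref{JacBQ},~\eqref{embphi},~\eqref{lamu}. Specialising $(X:Z)$ at a third root $(\tilde{\tilde{\theta}}t+\tilde{\tilde{\lambda}}:t+\tilde{\tilde{\mu}})$ then makes the left side of~\eqref{keyeqnS4} equal to a constant times $\nu(t)$, so the sixteen preimages of $Y=0$ on $\D_t$ are literally independent of $t$ as points of $\PP^5$. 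Since the $E_t[4]$-action~\eqref{act4} is also given by $t$-independent formulae, constancy as an $E[4]$-torsor follows. Setting $t=\infty$ recovers $D$ and fixes the base point.

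Your plan would work in principle, but the ``main obstacle'' you flag---pinning down which divisor of $d(t)$ occurs to odd order---is genuinely nontrivial by your method, and the degeneration argument at the cusps is only sketched. The paper's direct computation via the cross-identity sidesteps all of this: it never needs Theorem~\ref{T:theta_shift} in the proof, and it yields the stronger conclusion (needed for Corollary~\ref{cor1}) that the sixteen points are constant as a subscheme of $\PP^5$, not merely that $\nu(t)$ is determined modulo $L(t)^{\times 2}$.
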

\begin{proof}
  We first note that
\begin{equation}
\label{norm-id}
N_{L/\K}(t^2 - 2 t \varphi - 2 \varphi^2 - \wA) = d(t), 
\end{equation}
and so $\nu(t)$ does indeed correspond to an element of $H^1(\K,E[2])$
via the isomorphism~\eqref{H1isom}.

In Section~\ref{sec:geomS4} we exhibited another family of $4$-covers
$\D_t \to E_t$. This had constant fibre above $0$, as could be checked
by substituting $(X:Z) = (0:1)$, $(1:0)$, $(1:1)$ or $(\la:1)$ into
the equations~\eqref{sixeqns}, and observing that in each case the $4$
solutions for $(y_1 : \ldots : y_6) \in \PP^5(\Kbar)$ do not depend on
$\la$. The argument here is similar.

A calculation using~\eqref{invIJ}, \eqref{JacBQ}, \eqref{embphi}
and~\eqref{lamu} shows that
\begin{equation}
\label{cross-id}
(t + \tilde{\mu})(\theta t + \la) - (t+\mu) (\tilde{\theta} t +
\tilde{\la}) = (\theta-\tilde{\theta}) (t^2 - 2 \varphi t - 2
\varphi^2 - \wA). 
\end{equation}
Let $\tilde{\tilde{\theta}}$ be another root of $g$.  Substituting $X=
\tilde{\tilde{\theta}} t + \tilde{\tilde{\la}}$ and $Z = t +
\tilde{\tilde{\mu}}$ into the left hand side of~(\ref{keyeqnS4}),
gives a quartic polynomial in $t$, which by~\eqref{norm-id}
and~\eqref{cross-id} is a constant times $\nu(t)$, as defined
in~\eqref{eqn:nu}. Therefore the $16$ points on $\D_t$ mapping to the
points on $C_t$ with $Y=0$, are independent of $t$.  This shows that
the fibre above $0$ is constant (with respect to $t$) as a $k$-scheme.
In Section~\ref{sec:geomS4} we saw that the action of $E_t[4]$ on
$\D_t$ is given by formulae independent of $t$. Therefore the fibre
above $0$ is also constant as an $E[4]$-torsor.

Finally we note that taking $t = \infty$ gives the cover~\eqref{start}
with $\nu = 1$, which by Remark~\ref{rems1}(ii) is isomorphic to $D$.
Of course, setting $t = \infty$ in~\eqref{eqn:nu} does not literally
make sense. However after homogenising, and rescaling by a square, we
do indeed have $\nu(\infty) = 1$.
\end{proof}

\begin{Corollary}
\label{cor1}
Suppose that $\xi \in H^1(k,E[4])$ and $D_{E,\xi}$ is given
by~\eqref{oldstart}.  Then the surface $S^+_{E,\xi}(4)$ has a singular
model in $\PP^5$ defined by $3$ quadrics. These quadrics are obtained
from the $[M:\K]=6$ equations in $\K(t)[X,Z,y_1, \ldots,y_6]$ coming
from~\eqref{keyeqnS4}, by taking linear combinations to eliminate
$X^2, XZ$ and $Z^2$.
\end{Corollary}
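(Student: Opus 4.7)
The plan is to run the elimination of Theorem~\ref{thm2} on the generic fibre and to identify the resulting quadrics as the defining equations of the singular model in $\PP^5$.

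First I would expand both sides of~\eqref{keyeqnS4} in a $k$-basis $m_1,\ldots,m_6$ of the sextic \'etale algebra $M$. After clearing the denominator of $\nu(t) \in L(t)$, taking $m_i$-coefficients yields six equations in $k[t][X,Z,y_1,\ldots,y_6]$ of the shape
\[ a_i(t) X^2 + b_i(t) XZ + c_i(t) Z^2 \;=\; R_i(t,y_1,\ldots,y_6), \qquad i=1,\ldots,6,\]
with $a_i,b_i,c_i \in k[t]$ and $R_i$ a quadratic form in the $y_j$ with coefficients in $k[t]$. A direct calculation (using that $1$, $\theta+\tilde\theta$ and $\theta\tilde\theta$ are $k$-linearly independent in $M$) shows that the three polynomials arising as the $X^2$, $XZ$ and $Z^2$ coefficients on the LHS of~\eqref{keyeqnS4} are $k(t)$-linearly independent in $M\otimes_k k(t)$. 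Hence the $6\times 3$ coefficient matrix has rank $3$ and its left kernel $K\subset k(t)^6$ is three-dimensional. Pairing any basis of $K$ with $(R_1,\ldots,R_6)$ produces three quadrics $P_1,P_2,P_3$ in the $y_j$ that vanish on every fibre $\D_t$, and hence on the image of $S^+_{E,\xi}(4)$ in $\PP^5$.

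The main obstacle is to show that $K$ contains a basis for which the $P_j$ have coefficients in $k$ itself, so that the three quadrics cut out a surface in $\PP^5$ rather than a family over $\PP^1$. I would argue this by comparison with the untwisted case of Section~\ref{sec:geomS4}: by~\eqref{act4} the action of $E_t[4]$ on $\D_t\subset\PP^5$ is $t$-independent, so under the twist the theta group $\Theta_{E,\xi}$ acts on $\PP^5$ by a $t$-independent representation preserving each $\D_t$. In the untwisted situation the first three quadrics of~\eqref{fivequads} span a three-dimensional $\Theta$-stable subspace of the degree-$2$ part of the ideal of $\D_t$, while the remaining two (non-stable) quadrics carry the $t$-dependence that distinguishes $\D_t$ within the surface. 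The corresponding $\Theta_{E,\xi}$-stable three-dimensional subspace in the twisted setting is canonically defined, hence Galois-stable and defined over $k$; choosing our basis of $K$ inside this subspace gives $P_1,P_2,P_3 \in k[y_1,\ldots,y_6]$.

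Finally, a degree count finishes the proof. Over $\Kbar$ the twist $S^+_{E,\xi}(4)$ becomes $S(4)$, with image $S_0\subset\PP^5$ of degree~$8$ by Section~\ref{sec:geomS4}, while three quadrics in $\PP^5$ generically cut out a complete intersection of dimension~$2$ and degree~$2^3=8$ by B\'ezout. The image of $S^+_{E,\xi}(4)$ in $\PP^5$ is therefore a two-dimensional subscheme of degree~$8$ contained in $\{P_1=P_2=P_3=0\}$, and the two must coincide, giving the desired singular model.
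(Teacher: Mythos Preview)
Your setup is correct and you rightly identify $t$-independence of the three quadrics as the crux. But your argument for it has a gap: invoking $\Theta_{E,\xi}$-stability and saying the resulting subspace is ``canonically defined, hence Galois-stable and defined over $k$'' only addresses descent from $\Kbar$ to $k$, not from $k(t)$ to $k$. The five-dimensional space of quadrics through $\D_t$ moves with $t$, and you have not shown that its $\Theta$-stable three-dimensional piece stays fixed as $t$ varies, nor that this piece coincides with what the elimination actually produces from the kernel $K$. (One could try to complete your argument by checking that the three quadrics span a full isotypic component of the fixed $\Theta$-representation on all of $\operatorname{Sym}^2(k^6)$, but you do not do this.)

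The paper's argument is more concrete and simply re-uses the computation already carried out in the proof of Theorem~\ref{thm2}. There it was shown that substituting a root of $G_t(X,Z)$ into the left side of~\eqref{keyeqnS4} yields, in each component of $M$, a $t$-independent constant times the corresponding component of $\nu(t)$. Hence, after cancelling the common factor $\nu(t)$, the six left-hand sides evaluated at any one of the four roots of $G_t$ give a $t$-independent vector in $k^6$. A linear combination of the six left-hand sides vanishes identically as a binary quadratic in $X,Z$ if and only if it vanishes at these four roots, i.e.\ lies in the orthogonal complement of the span of the four evaluation vectors. That span is $t$-independent, so the kernel is too, and pairing a $k$-basis of it with the (now $t$-independent) right-hand sides gives three quadrics in $k[y_1,\ldots,y_6]$.
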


\begin{proof}
  The key point is that the $3$ quadrics
  are independent of $t$. Again the argument is best understood by
  comparing with the situation in Section~\ref{sec:geomS4}.  The same
  calculation as mentioned in the proof of Theorem~\ref{thm2} shows
  that the linear combinations of the left hand sides
  in~\eqref{sixeqns} that vanish at $(X:Z) = (0:1)$, $(1:0)$, $(1:1)$
  and $(\la:1)$, and therefore vanish identically, do not depend on
  $t$. This explains why the the first $3$ quadrics
  in~\eqref{fivequads} do not depend on $t$. The same idea works here.
\end{proof}

\begin{Remark}
  In Theorem~\ref{thm2} we not only made the fibre above $0$ constant
  as a $k$-scheme, we made it constant as a subscheme of $\PP^5$.  For
  this, and the application to Corollary~\ref{cor1}, we needed to know
  $\nu(t)$ mod $L^{\times 2}$, not just mod $L(t)^{\times 2}$.
\end{Remark}

The genus $1$ fibration is given either by using~\eqref{eqn:Y} (which
gives two further quadratic forms in $y_1, \ldots, y_6$, now depending
on $t$) or by using Remark~\ref{get-fib}.

\section{Computing twists of $S(4)$ and $T(4)$ in the reverse case}
\label{S:rev}

In this section we take $\xi\in H^1(\K,E[4])$ with $\ob_{E,4}(\xi)=0$
and compute models for $S^-_{E,\xi}(4)$ and $T^-_{E,\xi}(4)$.

Let $E/k$ be an elliptic curve $y^2=f(x)=x^3+Ax+B$, and
$\Delta=-16(4A^3+27B^2)$ its discriminant.  We write $E^\Delta$ for
the quadratic twist of $E$ by $\Delta$. Likewise if $C\to\PP^1$ is a
double cover, then we write $C^\Delta$ for its quadratic twist (over
$\PP^1$) by $\Delta$.  We may summarise the results of this section by
saying that everything carries over from Section~\ref{S:twist_shioda}
with the following changes. 
\begin{itemize}
\item We replace $E_t$ by $E_t^\Delta$ and $C_t$ by $C_t^\Delta$.
\item The family $D_t$ is computed using the contravariants instead of
  the covariants (these were the identity map and the Hessian).
\item In Theorem~\ref{thm1} we multiply one side of the equation by
  $g'(\theta)$. This is an element of $F$ whose norm is $\Delta$ (up
  to squares).
\item In Theorem~\ref{thm2} we multiply one side of the equation by
  $(\theta - \tilde{\theta})^2 \Delta$.
\end{itemize}

We now go through the changes in detail.

\subsection{Reverse twists of the universal elliptic curve}
Let \[E^\Delta\colon y^2=\Delta\, f(x)\] be the quadratic twist of $E$
by $\Delta$. Then by \cite{BD}*{Corollary 7.4}, or
Remark~\ref{reverse4_via_inv_thy} below, there is a reverse
$4$-congruence $\sigma\colon E[4]\to E^\Delta[4]$.  We may thus
identify $X^-_E(4)=X^+_{E^\Delta}(4)$.  It is immediate from
Lemma~\ref{lem:4congr} that
\[ E_t^\Delta : \quad y^2 = -\Delta\,f(t)\, (x - \gamma(t)) (x^3 + \wA
x + \wB), \] with base point $(x,y)=(\gamma(t),0)$, is the universal
elliptic curve over $X^-_E(4)$.

\subsection{Reverse twists of the theta modular surface} 
\label{S:reverse_theta_modular}
Let $\xi\in H^1(\K,E[4])$ with $\ob_{E,4}(\xi)=0$.  Then the $4$-cover
$D_{E,\xi}$ has a model as a quadric intersection $D \subset \PP^3$
with theta group $\Theta=\Theta_{E,\xi}\subset\GL_4$. Let $\Theta^\vee
\subset \GL_4$ be the subgroup of matrices inverse transpose to those
in $\Theta$. Let $\pi : \Theta^\vee \to E^\Delta[4]$ be the map that
makes the following diagram commute (where the superscript $-T$
denotes inverse transpose)
\[\xymatrix{
  0\ar[r]&\Gm\ar[r]\ar[d]^{.^{-1}}&\Theta\ar[r]\ar[d]^{.^{-T}}
  &E[4]\ar[r]\ar[d]^{\sigma}&0\\
  0\ar[r]&\Gm\ar[r]&\Theta^\vee\ar[r]^-{\pi}&E^\Delta[4]\ar[r]&0 }
\]
Since $\sigma$ reverses the Weil pairing, the second row gives
$\Theta^\vee$ the structure of theta group for $E^\Delta[4]$. The
diagram then shows that $\Theta$ and $\Theta^\vee$ are
$\sigma$-isomorphic (see Definition~\ref{def:sigma-isom}).

We have $\Theta^\vee = \Theta_{E^\Delta,\xi'}$ for some $\xi'\in
H^1(\K,E^\Delta[4])$.  By \cite{descI}*{Theorem 5.2} there is a unique
model for the $4$-cover $D_{E^\Delta,\xi'}$ as a quadric intersection
$D^\vee \subset \PP^3$ with theta group $\Theta^\vee$.  The
contravariants, introduced in \cite{g1hessians}, give a way of
computing equations for $D^\vee$.  The details are as follows.

Let $D = \{ Q_1 = Q_2 = 0 \} \subset \PP^3$, and let $A_1$ and $A_2$
be the corresponding symmetric matrices.  Let $a,b,c,d,e$ and $I,J$
and $\wA, \wB$ be as defined in Section~\ref{sec:4cover}. Let $S_0,
\ldots, S_3$ be the quadratic forms corresponding to the matrices
$C_0, \ldots, C_3$ defined by
\[\adj(X A_1+Z A_2) =  C_0 X^3+  C_1 X^2Z + C_2 XZ^2 + C_3 Z^3. \]
The contravariants are defined by
\begin{align*}
R_1 & = \frac{1}{12} \left(
  \frac{\partial I}{\partial a} S_0 
+ \frac{\partial I}{\partial b} S_1
+ \frac{\partial I}{\partial c} S_2
+ \frac{\partial I}{\partial d} S_3 \right),  \\
R_2 & = \frac{1}{12} \left(
  \frac{\partial I}{\partial b} S_0 
+ \frac{\partial I}{\partial c} S_1
+ \frac{\partial I}{\partial d} S_2
+ \frac{\partial I}{\partial e} S_3 \right),  \\
R_1' & = \frac{1}{12^2} \left(
  \frac{\partial J}{\partial a} S_0 
+ \frac{\partial J}{\partial b} S_1
+ \frac{\partial J}{\partial c} S_2
+ \frac{\partial J}{\partial d} S_3 \right),  \\
R_2' & = \frac{1}{12^2} \left(
  \frac{\partial J}{\partial b} S_0 
+ \frac{\partial J}{\partial c} S_1
+ \frac{\partial J}{\partial d} S_2
+ \frac{\partial J}{\partial e} S_3 \right).  
\end{align*}

The quadric intersection $D^\vee$ then has equations
\begin{equation}
\label{dvee}
 D^\vee \colon \quad \{   -9 \wB R_2 + 2 \wA R'_2 =  9 \wB R_1 - 2 \wA R'_1 
 = 0 \} \subset \PP^3. 
\end{equation}
Indeed the invariant theory shows that $\Theta^\vee$ acts on $D^\vee$,
and that $D^\vee$ has Jacobian $E^\Delta$. Strictly speaking, to show
that $D^\vee$ has theta group $\Theta^\vee$, we need that the two
actions of $E^\Delta[4]$ on $D^\vee$ (one arising from the
identification $\Theta^\vee/\Gm = E^\Delta[4]$ and the other from the
structure of $D^\vee$ as a homogeneous space) agree.  Since they agree
up to an automorphism of $E^\Delta[4]$ that respects the Weil pairing,
the desired agreement may be achieved by adjusting our choice of
$\sigma$.
 
\begin{Remark}
\label{sigma_obsession}
It is convenient to fix our choice of $\sigma$ once and for
all. Following the proof of \cite{BD}*{Corollary 7.4} we let $\sigma$
correspond to the non-trivial element in the centre of
$\GL_2(\Z/4\Z)/\{\pm 1\}$.  One way of seeing that this $\sigma$ and
the one from the previous paragraph agree (up to sign) is by observing
that both are defined for the elliptic curve $E\colon x^3 + a_2 x^2 +
a_4 x + a_6$ over $k(a_2,a_4,a_6)$, which is a sufficiently general
elliptic curve not to admit other $\sigma$. But then any elliptic
curve over $k$ can be obtained by specializing this $E$, and $\sigma$
specializes with it.
\end{Remark} 

The following lemma follows from~\eqref{dvee} by direct calculation.
\begin{Lemma}
\label{L:twist_intermediate_covers}
If $D \subset \PP^3$ has intermediate $2$-cover $C\colon Y^2 = G(X,Z)$
then $D^\vee \subset \PP^3$ has intermediate $2$-cover $C^\Delta\colon
Y^2 = \Delta G(X,Z)$.
\end{Lemma}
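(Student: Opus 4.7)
The proof is a direct calculation from~(\ref{dvee}). Let $M_1$ and $M_2$ denote the $4\times 4$ symmetric matrices of the two quadratic forms defining $D^\vee$. The intermediate $2$-cover of $D^\vee = \{M_1 = M_2 = 0\}$ has Weierstrass model $Y^2 = \det(XM_1+ZM_2)$, so the task reduces to showing
\[ \det(XM_1+ZM_2) \equiv \Delta\,G(X,Z) \pmod{k^{\times 2}}. \]

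I would carry out this verification in the simultaneously diagonal model $A_1 = \Diag(\xi_1,\ldots,\xi_4)$, $A_2=\Diag(-\xi_1\theta_1,\ldots,-\xi_4\theta_4)$ already used in the proof of Lemma~\ref{prop:int2}, where $g(x)=G(x,1) = 2^4(\prod\xi_i)\prod(x-\theta_i)$. In that model $\adj(XA_1+ZA_2)$ is diagonal with $i$-th entry $(\prod_j\xi_j/\xi_i)\prod_{j\ne i}(X-\theta_jZ)$. Reading off coefficients of $X^3,X^2Z,XZ^2,Z^3$ yields diagonal matrices $C_0,\ldots,C_3$, hence diagonal quadratic forms $S_0,\ldots,S_3$. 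Since $\partial I/\partial a,\ldots,\partial J/\partial e$ are polynomial in the coefficients of $G$, the contravariants $R_1,R_2,R_1',R_2'$ are again diagonal, with $i$-th entries explicit polynomials in $\xi_i,\theta_i$ and the symmetric functions of the $\theta_j$.

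Consequently $XM_1+ZM_2$ is diagonal, and its determinant is a product of four linear forms in $X,Z$, one from each slot. The goal becomes to show that the $i$-th factor equals $\xi_i(X-\theta_iZ)$ up to a constant whose fourth power lies in $\Delta\cdot(k^\times)^2$. This is a universal polynomial identity in $a,b,c,d,e$, and the cleanest way to see it holds is by an equivariance argument: the construction $(Q_1,Q_2)\mapsto D^\vee$ is $\SL_2\times\SL_4$-equivariant, so $\det(XM_1+ZM_2)$ is an $\SL_2$-covariant binary quartic in $X,Z$ of the same weight in $a,\ldots,e$ as $\Delta\,G(X,Z)$; the ratio is therefore a scalar invariant, and a single generic example pins it down.

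The main obstacle is bookkeeping: tracking the factors of $12$ and $12^2$ and the signs in~(\ref{dvee}), and confirming that the pinned-down scalar is a square in $k$ (so that the resulting $2$-cover is indeed isomorphic to $C^\Delta$ over $k$, not merely over a quadratic extension). A convenient way to finish is to specialise to one explicit pencil (e.g.\ $A_1=I$, $A_2=\Diag(0,1,2,3)$): a short computation using~(\ref{invIJ}) and $\Delta=-16(4\wA^3+27\wB^2)$ with $\wA=-I/48,\wB=-J/1728$ then gives the scalar, and Lemma~\ref{L:twist_intermediate_covers} follows.
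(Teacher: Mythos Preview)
Your approach is the same as the paper's in spirit: the paper's entire proof is the sentence ``This follows from~(\ref{dvee}) by direct calculation'', and you are sketching how that calculation might be organised (pass to the diagonal model, then verify a universal identity by specialisation). That much is fine.

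Two points in your sketch need tightening. First, the sentence about the $i$-th factor equalling $\xi_i(X-\theta_iZ)$ ``up to a constant whose fourth power lies in $\Delta\cdot(k^\times)^2$'' is garbled: what you actually need is that the \emph{product} of the four constants lies in $\Delta\cdot(k^\times)^2$, and there is no reason each individual constant should satisfy any fourth-power condition. Second, and more substantively, the equivariance shortcut does not give what you claim. Even granting that $\det(XM_1+ZM_2)$ is an $\SL_4$-invariant and an $\SL_2$-covariant binary quartic in $X,Z$, the space of such quartic covariants (over the ring of invariants of $G$) is two-dimensional, spanned by $G$ and its Hessian $H$. So weight-matching alone tells you only that $\det(XM_1+ZM_2)=P\cdot G+Q\cdot H$ for some invariants $P,Q$; it does not force $Q=0$, and hence does not make the ratio a scalar. (In fact the degrees do not match either: $M_1,M_2$ have degree $19$ in the coefficients of $Q_1,Q_2$, so the determinant has degree $76$, i.e.\ degree $19$ in $a,\ldots,e$, whereas $\Delta\,G$ has degree $7$.)

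None of this is fatal, because your final step---specialise to one explicit pencil and compute---verifies the full polynomial identity directly, determining both $P$ and $Q$ (and showing $Q=0$). So the equivariance paragraph is really just motivation; the actual proof is the explicit check, which is exactly what the paper means by ``direct calculation''.
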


\begin{Remark}
\label{reverse4_via_inv_thy}
Since $C^\Delta$ is a $2$-cover of $E^\Delta$, it follows that
$D^\vee$ has Jacobian $E^\Delta$. As observed in \cite{g1hessians},
this gives an alternative proof that $E$ and $E^\Delta$ are reverse
$4$-congruent.
\end{Remark}

Let $u_1, \ldots, u_4$ and $v_1, \ldots, v_4$ be a pair of bases for
$F$ as a $k$-vector space, that are dual with respect to the trace
form, i.e. $\tr_{F/k}(u_i v_j) = \delta_{ij}$.  For example we could
take $u_i = \theta^{i-1}$ and $v_j = \beta_{j-1}/g'(\theta)$ where \[
\frac{g(X)}{X - \theta} = \beta_3 X^3 + \beta_2 X^2 + \beta_1 X +
\beta_0. \]

\begin{Lemma} 
\label{lem-dualtheta}
The theta groups for the quadric intersections obtained from
\begin{equation}
\label{dual-eqn1} 
  \alpha(X - \theta Z) = (x_1 u_1 + \ldots + x_4 u_4)^2 
\end{equation}
and 
\begin{equation}
\label{dual-eqn2} 
  \frac{1}{\alpha g'(\theta)} (X - \theta Z) 
     = (x_1 v_1 + \ldots + x_4 v_4)^2, 
\end{equation}
are the inverse transpose of each other.
\end{Lemma}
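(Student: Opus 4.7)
My plan is to recast both equations in coordinate-free form, identify the quadric intersections as trace-form duals inside $\PP(F)$, and then extract the inverse-transpose relation from the duality of the bases $\mathbf{u}$ and $\mathbf{v}$.

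Applying the elimination procedure of Section~\ref{S:24trivial} to \eqref{dual-eqn1} (expanded in basis $\mathbf{u}$) yields the quadric intersection $\mathcal{Q}(W) := \{x \in \PP(F) : x^2 \in W\}$ with $W := k\alpha + k\alpha\theta$. The analogous analysis of \eqref{dual-eqn2} in basis $\mathbf{v}$ yields $\mathcal{Q}(\widetilde{W})$ with $\widetilde{W} := k\beta + k\beta\theta$ for $\beta := 1/(\alpha g'(\theta))$. The classical trace identity $\tr_{F/k}(\theta^i/g'(\theta)) = \delta_{i,3}/a$ forces $\tr_{F/k}(wy)=0$ for all $w \in W$ and $y \in \widetilde{W}$, so $\widetilde{W} = W^{\perp}$ under the trace pairing.

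Because $\mathbf{u}$ and $\mathbf{v}$ are dual under the trace form, the matrix in basis $\mathbf{v}$ of any linear operator $T\colon F \to F$ equals the transpose of the matrix in basis $\mathbf{u}$ of the trace-adjoint $T^{\dagger}$ (defined by $\tr_{F/k}(T^{\dagger}x \cdot y)=\tr_{F/k}(x\cdot Ty)$ for all $x,y \in F$); equivalently, $(T^{\dagger})^{-1}$ has matrix $M^{-T}$ in $\mathbf{v}$ whenever $T$ has matrix $M$ in $\mathbf{u}$. Hence the lemma reduces to the assertion that the bijection $T \mapsto (T^{\dagger})^{-1}$ on $\GL(F)$ carries the theta group of $\mathcal{Q}(W)$ onto the theta group of $\mathcal{Q}(W^{\perp})$. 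For multiplication-by-$z$ operators this is transparent: multiplication by $z$ is trace-self-adjoint, and $z^2 W = W$ is equivalent to $z^{-2} W^{\perp} = W^{\perp}$.

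A clean way to finish uses the uniqueness statement of \cite{descI}*{Theorem 5.2}. One verifies that $\mathcal{Q}(\widetilde{W})$ is a quadric intersection with Jacobian $E^{\Delta}$, its intermediate $2$-cover being $C^{\Delta}$ from Lemma~\ref{L:twist_intermediate_covers}; this reduces to the congruence $N_{F/k}(\beta) \equiv a\Delta \pmod{(\K^{\times})^2}$, which follows from the standard relation between the discriminant of $g$ and that of $E$. Then $\mathcal{Q}(\widetilde{W})$ coincides, as a quadric intersection in $\PP^3$, with the contravariant $D^{\vee}$ from \eqref{dvee}, whose theta group is $\Theta^{\vee}=\Theta^{-T}$ by the construction recalled in Section~\ref{S:reverse_theta_modular}. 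The main obstacle is the general step in the purely direct approach: non-multiplication theta-group elements are composites of multiplications with the scalings by which $E[2]$-translations on $C$ lift to $D$, and matching these scalars between the two constructions requires care with the sign convention for $\sigma$ fixed in Remark~\ref{sigma_obsession}; the contravariant route sidesteps this issue.
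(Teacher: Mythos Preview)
Your coordinate-free setup is sound and clarifying: identifying the two quadric intersections as $\mathcal{Q}(W)$ and $\mathcal{Q}(W^\perp)$ in $\PP(F)$, verifying $\widetilde W=W^\perp$ from the Euler identity $\tr_{F/k}(\theta^i/g'(\theta))=0$ for $i\le 2$, and translating inverse-transpose into $T\mapsto(T^\dagger)^{-1}$ via the dual bases, are all correct. The check for multiplication operators is fine too.

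The gap is the appeal to \cite{descI}*{Theorem~5.2}. That theorem says the theta group determines the quadric intersection, so using it to conclude $\mathcal{Q}(\widetilde W)=D^\vee$ presupposes that $\mathcal{Q}(\widetilde W)$ already has theta group $\Theta^{-T}$---which is exactly the assertion of the lemma. The information you do establish, namely that $\mathcal{Q}(\widetilde W)$ has intermediate $2$-cover $C^\Delta$, does not single out the $4$-cover: by Proposition~\ref{P:4cov_trivob} the $2$-covers of $C^\Delta$ with trivial obstruction form a torsor under~\eqref{subq}, and nothing in your argument selects the class of $D^\vee$ among them. Even if it did, you would still need equality of quadric intersections in $\PP^3$, not merely $\PGL_4$-equivalence.

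The paper fills this gap by the computation you are trying to avoid: after base-changing so that $F=k^4$ and $u_i=v_i$ are the standard basis, both curves become diagonal, and one checks directly that the contravariant formulas of Section~\ref{S:reverse_theta_modular} carry $\sum\xi_i x_i^2=\sum\xi_i\theta_i x_i^2=0$ to $\sum x_i^2/(\xi_i g'(\theta_i))=\sum\theta_i x_i^2/(\xi_i g'(\theta_i))=0$. Since $D^\vee$ has theta group $\Theta^{-T}$ by construction, this delivers the full statement at once, not just the multiplication-operator part. Your argument becomes complete if you replace the appeal to uniqueness by this calculation; but then it is essentially the paper's proof rephrased.
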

\begin{proof}
  Extending our field we may assume $F = k^4$. It then suffices to
  prove the lemma in the case where $u_1, \ldots, u_4$ and $v_1,
  \ldots, v_4$ are the standard bases.

  Following~\eqref{traceq}, the quadric intersections obtained
  from~\eqref{dual-eqn1} and~\eqref{dual-eqn2} are
\[ \tr_{F/k} \left( \frac{x^2}{\alpha g'(\theta)}  \right)
= \tr_{F/k}  \left( \frac{\theta x^2 }{\alpha g'(\theta)} \right) 
  = 0, \] 
and 
\[ \tr_{F/k} \left( \alpha x^2 \right) = \tr_{F/k} \left( \alpha
  \theta x^2 \right) = 0.  \] The lemma reduces to showing that if $D
\subset \PP^3$ is defined by
\[ \sum_{i=1}^4 \xi_i x_i^2 = \sum_{i=1}^4 \xi_i \theta_i x_i^2 =
0, \] then $D^\vee \subset \PP^3$ is defined by
\[ \sum_{i=1}^4 \frac{ x_i^2 }{ \xi_i g'(\theta_i) } = \sum_{i=1}^4
\frac{ \theta_i x_i^2}{ \xi_i g'(\theta_i)} = 0. \] where $g(\theta) =
\prod_{i=1}^4 (X - \theta_i)$. This follows by direct calculation
using the contravariants.
\end{proof}

We obtain the following analogue of Theorem~\ref{thm1}. Let $\la$ and
$\mu$ be as defined in Lemma~\ref{prop:int2}.  We identify $E[4] =
E^\Delta[4]$ via $\sigma$ as specified in
Remark~\ref{sigma_obsession}.

\begin{Theorem}
\label{thm1-rev}
The family of $2$-covers $D_t^\vee \to
C_t^\Delta$
such that each $4$-cover $D_t^\vee \to E_t^\Delta$ has theta group the
inverse transpose of that for our original $4$-cover $D \to E$, is
obtained by the procedure in Section~\ref{S:24trivial}, starting in
place of~\eqref{oldstart} with
\begin{equation}
  \label{Ds-alt}
  \alpha g'(\theta)\big((t+\mu)X-(\theta t + \lambda)Z\big)
  =(x_1+x_2\theta+x_3\theta^2+x_4\theta^3)^2.
\end{equation}
\end{Theorem}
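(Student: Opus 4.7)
The plan is to apply Lemma~\ref{lem-dualtheta} fibrewise to the family~(\ref{Dt-alt}) constructed in Theorem~\ref{thm1}. Set $\alpha_t = \alpha(t+\mu)$ and $\theta_t = (\theta t + \lambda)/(t+\mu)$, so that~(\ref{Dt-alt}) takes the standard form $\alpha_t(X - \theta_t Z) = (\ldots)^2$ of Section~\ref{S:24trivial}, where $\theta_t$ is a root of $g_t(x)$. By Lemma~\ref{lem-dualtheta}, the inverse-transpose quadric intersection $D_t^\vee$ is then obtained by substituting $1/(\alpha_t g'_t(\theta_t))$ for $\alpha_t$ (equivalently, any representative of the same class in $F^\times/F^{\times 2}k^\times$).

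Using the identity~(\ref{deriv-id}), $g'_t(\theta_t) = g'(\theta) d(t)/(t+\mu)^2$, this substitution gives
\[
\frac{1}{\alpha_t g'_t(\theta_t)} \;=\; \frac{t+\mu}{\alpha g'(\theta) d(t)}.
\]
Since $d(t) \in k^\times$ is trivial in $F^\times/F^{\times 2}k^\times$, and multiplying through by the square $(\alpha g'(\theta))^2$, this class agrees with that of $\alpha g'(\theta)(t+\mu)$, which is precisely the coefficient on the left of~(\ref{Ds-alt}) after writing $(t+\mu)X - (\theta t + \lambda)Z = (t+\mu)(X - \theta_t Z)$. Hence~(\ref{Ds-alt}) defines the family $D_t^\vee$ of inverse transposes.

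The remaining claims then follow. By Lemma~\ref{L:twist_intermediate_covers}, applied fibrewise, the intermediate $2$-cover of $D_t^\vee$ is $C_t^\Delta$, whose Jacobian is $E_t^\Delta$. By Theorem~\ref{thm1} the family $D_t$ has constant theta group $\Theta_{E,\xi} \subset \GL_4$, so each $D_t^\vee$ has constant theta group equal to its inverse transpose. Finally, at $t = \infty$ (after homogenising and rescaling by a square), equation~(\ref{Ds-alt}) reduces to $\alpha g'(\theta)(X - \theta Z) = (\ldots)^2$, which by Lemma~\ref{lem-dualtheta} defines $D^\vee$.

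The delicate point — and the main obstacle — is the identification $E[4] = E^\Delta[4]$ carried by the resulting cocycle: the inverse-transpose construction a priori only determines the $4$-congruence $\sigma\colon E[4] \to E^\Delta[4]$ up to sign, but compatibility with the canonical choice fixed in Remark~\ref{sigma_obsession} follows by specialisation from the generic elliptic curve $y^2 = x^3 + a_2 x^2 + a_4 x + a_6$ over $k(a_2, a_4, a_6)$, exactly as in the argument of that remark.
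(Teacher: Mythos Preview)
Your proof is correct and follows essentially the same approach as the paper's own proof: apply Lemma~\ref{lem-dualtheta} fibrewise to the family of Theorem~\ref{thm1} to introduce the factor $g'_t(\theta_t)$, then use the identity~\eqref{deriv-id} to replace it by $g'(\theta)$ modulo $F^{\times 2}k^\times$. The paper compresses this into two sentences, whereas you have unpacked the computation and also made explicit the verification of the intermediate $2$-cover, the fibre at $t=\infty$, and the compatibility with the choice of $\sigma$ from Remark~\ref{sigma_obsession}; but the underlying argument is the same.
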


\begin{proof}
  According to Lemma~\ref{lem-dualtheta} we need to add a factor
  $g_t'(\theta_t)$ to the formula in Theorem~\ref{thm1}. However,
  since we only care about the image of this element in
  $F^\times/F^{\times2}\K^\times$ we see by~\eqref{deriv-id} that we
  can use $g'(\theta)$ instead.
\end{proof}

The analogue of Corollary~\ref{C:theta_surface_equation} is that the
surface $T^-_{E,\xi}(4)$ fibered by $D^\vee_t$ is given by
\[\{R_1R_2'-R_2R_1'=0\}\subset\PP^3.\] Alternatively, an equation for
this surface may be obtained from~\eqref{Ds-alt}, exactly as in
Remark~\ref{getquartic}.

\subsection{Reverse twists of Shioda's modular surface} 
Let $\sigma\colon E[4]\to E^\Delta[4]$ be the reverse $4$-congruence
specified in Remark~\ref{sigma_obsession}.  Given $\xi \in
H^1(k,E[4])$ with $\ob_{E,4}(\xi)=0$ we would like to write down a
model for $S^-_{E,\xi}(4)=S^+_{E^\Delta,\sigma_*(\xi)}(4)$.  If
$\ob_{E^\Delta,4}(\sigma_*(\xi))=0$, i.e.  $\sigma_*(\xi)$ is
represented by a quadric intersection (and we have these equations
explicitly) then Theorem~\ref{thm2}
gives equations for $S^-_{E,\xi}(4)$.  Unfortunately this condition is
not always satisfied.

Since $\ob_{E,4}(\xi)=0$ we have a model $D=D_{E,\xi}\subset
\PP^3$. The work in Section~\ref{S:reverse_theta_modular} gives us
$D^\vee=D_{E^\Delta,\xi'}\subset\PP^3$. It remains to determine
$\kappa=\sigma_*(\xi)-\xi'$. If $D$ is a $2$-cover of $C$ then
Lemma~\ref{L:twist_intermediate_covers} shows that $D^\vee$ is a
$2$-cover of $C^\Delta$. Since the matrix in
Remark~\ref{sigma_obsession} is congruent to the identity mod $2$, we
see that $D_{E^\Delta,\sigma_*(\xi)}$ is also a $2$-cover of
$C^\Delta$. Therefore $2\xi'=2\sigma_*(\xi)$, and so $\kappa$ is
$2$-torsion.

\begin{Lemma} 
\label{lem:kappa}
Suppose we have a $4$-cover $D=D_{E,\xi}$ with $\ob_{E,4}(\xi)=0$ and
let $D^\vee=D_{E^\Delta,\xi'}$. Then $\sigma_*(\xi)=\xi'+\kappa$,
where
\[\kappa = (3 \varphi^2 + \wA)/(4 \wA^3 + 27 \wB^2)\]
under the isomorphism~\eqref{H1isom}.
\end{Lemma}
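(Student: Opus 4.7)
By Theorem~\ref{T:theta_shift} applied to $\sigma\colon E[4]\to E^\Delta[4]$, there is a shift $\nu\in H^1(k,E[2])$, of order dividing $2$ and depending only on $E$ and $\sigma$, such that $\sigma_*(\xi)=\xi''+\nu$ in $H^1(k,E^\Delta[4])$ whenever $\Theta_{E,\xi}$ is $\sigma$-isomorphic to $\Theta_{E^\Delta,\xi''}$.  By the construction in Section~\ref{S:reverse_theta_modular}, $\Theta^\vee=\Theta_{E^\Delta,\xi'}$ is $\sigma$-isomorphic to $\Theta_{E,\xi}$, so $\kappa=\nu$ is a fixed element of $\ker(N_{L/k}\colon L^\times/L^{\times 2}\to k^\times/k^{\times 2})$, independent of $\xi$.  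Here we have used the natural identification $E[2]=E^\Delta[2]$ (both $2$-torsion schemes are cut out by $f(x)=0$), which identifies the $H^1$'s and the associated \'etale algebra $L$.

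Since $\kappa$ does not depend on $\xi$, it suffices to compute it for one convenient choice.  Take $\xi=0$ with $D$ the embedding of $E$ in $\PP^3$ coming from the basis $1,x,y,x^2$ of $L(4\cdot 0_E)$, namely
\[
D\colon\quad x_3^2-x_2x_4-\wA x_1x_2-\wB x_1^2=x_1x_4-x_2^2=0.
\]
Then $\sigma_*(\xi)=0$, so $\kappa=-\xi'=\xi'$ (as $2$-torsion).  Apply the contravariant formulas~\eqref{dvee} to produce $D^\vee\subset\PP^3$ explicitly in terms of $\wA$ and $\wB$: compute the matrices $A_1,A_2$, the cofactor decomposition $\adj(XA_1+ZA_2)=\sum_{i=0}^3 C_i X^{3-i}Z^i$, and the contravariants $R_1,R_2,R_1',R_2'$.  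By Lemma~\ref{L:twist_intermediate_covers}, $D^\vee$ is a $2$-cover of $C^\Delta=E^\Delta$, so $\xi'$ lives in $H^1(k,E^\Delta[2])$ and may be read off as a square class in $L^\times$ via the $2$-descent correspondence.

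The main obstacle here is bookkeeping rather than ideas: $\kappa$ is well defined only modulo $L^{\times 2}$, but arriving at the specific representative $(3\varphi^2+\wA)/(4\wA^3+27\wB^2)$ requires careful tracking of the $12$-th power and sign normalizations implicit in $I$, $J$, the contravariants $R_i,R_i'$, and the identification~\eqref{H1isom}.  As a consistency check on the proposed formula, using $f'(\varphi)=3\varphi^2+\wA$ and $\prod_i f'(\varphi_i)=-\mathrm{disc}(f)=4\wA^3+27\wB^2$, we obtain
\[
N_{L/k}\!\left(\frac{3\varphi^2+\wA}{4\wA^3+27\wB^2}\right)=\frac{4\wA^3+27\wB^2}{(4\wA^3+27\wB^2)^3}=(4\wA^3+27\wB^2)^{-2}\in(k^\times)^2,
\]
confirming that the claimed $\kappa$ does lie in $\ker(N_{L/k})$ and is therefore a valid candidate for the shift produced by Theorem~\ref{T:theta_shift}.
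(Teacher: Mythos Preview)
Your reduction is exactly the one the paper uses: invoke Theorem~\ref{T:theta_shift} to see that $\kappa$ depends only on $(E,\sigma)$, then specialise to $\xi=0$ and compute $D^\vee$ explicitly via the contravariants. The norm check is a nice sanity test. However, what you present is a plan rather than a proof: the actual identification of $\xi'$ with the stated square class is never carried out, and your remark that ``$\xi'$ may be read off as a square class in $L^\times$ via the 2-descent correspondence'' hides a non-obvious step. When $\xi=0$ the intermediate $2$-cover of $D^\vee$ is the trivial cover $E^\Delta$, so $2\xi'=0$, but extracting $\xi'$ itself from the quadric intersection $D^\vee$ requires either the rank-$1$ form~\eqref{rank1} followed by a passage from $F^\times$ to $L^\times$, or some other device; you do not say which, nor do you perform it.

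The paper sidesteps this difficulty by going in the opposite direction. Rather than extracting $\xi'$ from $D^\vee$, it writes down from scratch the $2$-cover of $E^\Delta$ corresponding to the \emph{claimed} $\kappa$ (using the description $x+(4\wA^3+27\wB^2)\varphi=\kappa(u+v\varphi+w\varphi^2)^2$ and expanding in powers of $\varphi$), and then exhibits an explicit linear change of coordinates taking this curve isomorphically onto the $D^\vee$ computed from~\eqref{dvee}. This avoids having to ``read off'' a descent class from a quadric intersection and reduces the proof to a single verifiable identity. Your approach would work too, but you need to actually execute one of these computations; as written, the proposal establishes only that $\kappa$ exists and has square norm, not that it equals $(3\varphi^2+\wA)/(4\wA^3+27\wB^2)$.
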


\begin{proof}
  By the same argument as in the proof of Theorem~\ref{T:theta_shift},
  we see that $\kappa$ only depends on $\sigma\colon E[4]\to E^\Delta[4]$,
  and not on $\xi$ itself.  Thus it suffices to show that the inverse
  transpose of $\Theta_E$ is the twist of $\Theta_{E^\Delta}$ by
  $\kappa$.

  Let $E$ have Weierstrass equation $y^2 = x^3 + \wA x + \wB$. We
  embed $E \to \PP^3$ via $(x_1 : \ldots: x_4) = (1:x:y:3 x^2+ \wA)$.
  The image is $D \subset \PP^3$ defined by
  \begin{align*}
    \wA x_1^2 - x_1 x_4 + 3 x_2^2 &= 0, \\
    3 \wB x_1^2 + 2 \wA x_1 x_2 + x_2 x_4 - 3 x_3^2 &= 0.
  \end{align*}
  Then by~\eqref{dvee}, $D^\vee \subset \PP^3$ has equations
  \begin{equation}
    \label{qidual}
    \begin{aligned}
      3 \wA x_1^2 - 9 \wB x_1 x_2 - \wA^2 x_2^2
      + 6 (4 \wA^3 + 27 \wB^2) x_4^2 &= 0, \\
      9 \wB x_1^2 + 4 \wA^2 x_1 x_2 - 3 \wA \wB x_2^2 + (4 \wA^3 + 27
      \wB^2) ( x_3^2 + 4 x_2 x_4) &= 0.
    \end{aligned}
  \end{equation}

  On the other hand the $2$-cover of $E^\Delta$ corresponding to
  $\kappa$ is given by
  \[ x + (4\wA^3 + 27 \wB^2) \varphi = \kappa ( u + v \varphi + w
  \varphi^2)^2. \] Expanding and taking the coefficients of $\varphi$
  and $\varphi^2$ gives equations
\begin{equation}
\label{qivia2desc}
\begin{aligned}
  3 u^2 - 2 \wA v^2 - 4 \wA u w - 6 \wB v w + 2 \wA^2 w^2 &= 0, \\
  -4 \wA u v - 3 \wB v^2 - 6 \wB u w + 4 \wA^2 v w + 5 \wA \wB w^2 &=
  s^2.
\end{aligned}
\end{equation}
The curves~\eqref{qidual} and~\eqref{qivia2desc} are isomorphic via
\[ (x_1 : x_2 : x_3 : x_4) = ( 18 \wB v - 4 \wA^2 w : 12 \wA v + 18
\wB w : 6 s : 3 u - 2 \wA w ).\qedhere\]
\end{proof}

Using the construction in Section~\ref{S:Dprime_nu}, alternative
equations for the curves $D^\vee_t$ in Theorem~\ref{thm1-rev} are
given by
\begin{equation}
\label{alteqns}
N_{LF/M} \big(\alpha g'(\theta) \big( (t + \mu) X
- (\theta t +   \la) Z \big) \big)
= (y_1 m_1 + \ldots + y_6 m_6)^2. 
\end{equation}
We now modify this by introducing a factor $\nu(t) \kappa$
representing the shift from Theorem~\ref{T:theta_shift}.  We identify
$E[4] = E^\Delta[4]$ via $\sigma$ as specified in
Remark~\ref{sigma_obsession}.

\begin{Theorem}
  The family of $2$-covers $\D^\vee_t \to C_t^\Delta$ such that each
  $4$-cover $\D^\vee_t \to E_t^\Delta$ has the same fibre above $0$
  (as an $E[4]$-torsor) as our original $4$-cover $D \to E$, is
  obtained by the procedure in Section~\ref{S:Dprime_nu}, starting in
  place of~\eqref{start} with
  \begin{equation}
    \label{keyeqn-rev}
    N_{LF/M}(\alpha  \big( (t + \mu) X - (\theta t +   \la) Z \big))
    =  \nu(t) (\theta-\tilde{\theta})^2\Delta 
    (y_1 m_1 + \ldots + y_6 m_6)^2  
\end{equation}
where $\nu(t)$ is given by~\eqref{eqn:nu}.
\end{Theorem}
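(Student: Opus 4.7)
The plan is to combine Theorem~\ref{thm1-rev}, Lemma~\ref{lem:kappa} and the construction of Section~\ref{S:Dprime_nu} in the same way that Theorem~\ref{thm2} was deduced from Theorem~\ref{thm1}, while also tracking the mismatch between the inverse-transpose theta group and the reverse $4$-congruence $\sigma$ fixed in Remark~\ref{sigma_obsession}. Throughout, the ambient construction of Section~\ref{S:Dprime_nu} produces, for each $\nu_0 \in L(t)^\times$ whose norm down to $k(t)$ is a square, a $2$-cover of $C_t^\Delta$ that is a twist of the cover $D_t^\vee \to C_t^\Delta$ of Theorem~\ref{thm1-rev}. By Proposition~\ref{P:4cov}, varying $\nu_0$ modulo $L(t)^{\times 2} k(t)^\times$ exhausts all such $2$-covers.

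First, I would identify the correct twist. The cover $D_t^\vee \to E_t^\Delta$ of Theorem~\ref{thm1-rev} represents some class $\xi'_t \in H^1(k(t), E_t^\Delta[4])$, whereas the cover whose fibre above $0$ agrees with $D \to E$ (as an $E[4]$-torsor, via $\sigma$) represents $\sigma_*(\xi_t)$, where $\xi_t$ is the image of $\xi$ in $H^1(k(t), E_t[4])$. Two corrections are needed: the shift $\nu(t)\in H^1(k(t),E^\Delta[2])$ from Theorem~\ref{T:theta_shift}, exactly as in the proof of Theorem~\ref{thm2}; and the class $\kappa \in H^1(k, E^\Delta[2])$ from Lemma~\ref{lem:kappa}, which accounts for the discrepancy $\sigma_*(\xi) - \xi' = \kappa$. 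Since $\kappa$ depends only on $\sigma$, and the shift argument runs verbatim for the reverse congruence (identifying $E[4] = E^\Delta[4]$ via $\sigma$), the total twist is $\nu_0(t) = \nu(t)\kappa$. Feeding this into the construction of Section~\ref{S:Dprime_nu} starting from~\eqref{Ds-alt} gives
\[
N_{LF/M}\bigl(\alpha\, g'(\theta)\,((t+\mu)X - (\theta t + \la)Z)\bigr) = \nu(t)\,\kappa\,(y_1 m_1 + \cdots + y_6 m_6)^2.
\]

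Second, I would rewrite this in the form stated in the theorem. Divide both sides by $N_{LF/M}(g'(\theta)) = g'(\theta) g'(\tilde\theta) \in M^\times$; on the left we recover $N_{LF/M}(\alpha((t+\mu)X-(\theta t+\la)Z))$, while the right becomes $\nu(t)\kappa/(g'(\theta)g'(\tilde\theta))$ times the same square. It therefore suffices to verify the identity
\[
\frac{\kappa}{g'(\theta)\, g'(\tilde\theta)} \;\equiv\; (\theta - \tilde\theta)^2\, \Delta \pmod{M^{\times 2} k^\times},
\]
after which the right-hand factor $g'(\theta) g'(\tilde\theta)$ can be absorbed into the squared expression and we arrive at~\eqref{keyeqn-rev}.

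The main obstacle is this last identity. Using $\kappa = (3\varphi^2 + \wA)/(4\wA^3+27\wB^2)$ from Lemma~\ref{lem:kappa}, the relation $\Delta = -16(4\wA^3+27\wB^2)$, the embedding $\varphi = -(a\theta\tilde\theta - c/3 + e/(\theta\tilde\theta))/4$ of~\eqref{embphi}, and the expressions~\eqref{invIJ} for $I,J$ in terms of $a,b,c,d,e$, both sides become explicit elements of $M$ and the identity becomes an equality modulo squares in $M^\times$ times $k^\times$. This is a single generic calculation in the rational function field $k(a,b,c,d,e)$, conveniently checked on a symbolic algebra system. Finally, once the equation has been put in the claimed form, the verification that the fibre of $\D_t^\vee$ above $0 \in C_t^\Delta$ is constant as an $E[4]$-torsor is literally the argument given at the end of the proof of Theorem~\ref{thm2}: substitute $(X,Z) = (\tilde{\tilde\theta}t + \tilde{\tilde\la}, t + \tilde{\tilde\mu})$ for each root of $g$, and use~\eqref{norm-id} and~\eqref{cross-id} to see that the $16$ solutions for $(y_1:\dots:y_6)\in \PP^5(\overline k)$ do not depend on $t$; combined with the fact that the $E[4]$-action on $\D_t^\vee$ inherited from the Section~\ref{sec:geomS4} model is independent of $t$, this yields the conclusion. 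Specialising at $t=\infty$ (after homogenising, where $\nu(\infty)=1$) recovers the original cover $D$ up to the twist by $\kappa$ that was built into the construction.
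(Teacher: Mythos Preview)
Your proposal is correct and follows essentially the same route as the paper: introduce the extra twist $\nu(t)\kappa$ into~\eqref{alteqns}, invoke the constant-fibre argument from Theorem~\ref{thm2} (which goes through unchanged since $g'(\theta)$ and $\kappa$ are independent of $t$), check the fibre at $t=\infty$ via Lemmas~\ref{lem-dualtheta} and~\ref{lem:kappa}, and then simplify to~\eqref{keyeqn-rev}. One refinement worth noting: rather than a black-box generic calculation, the paper records the explicit identity $g'(\theta)g'(\tilde\theta) = -16(\theta-\tilde\theta)^2(3\varphi^2+\wA)$, from which $g'(\theta)g'(\tilde\theta)\kappa \equiv (\theta-\tilde\theta)^2\Delta \pmod{L^{\times 2}}$ is immediate --- and you should state the congruence modulo $L^{\times 2}$ (or at least $M^{\times 2}$) rather than $M^{\times 2}k^\times$, since only squares in $M$ can actually be absorbed into $(y_1 m_1 + \cdots + y_6 m_6)^2$.
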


\begin{proof}
  We introduce an extra factor $\nu(t) \kappa$ to the right hand side
  of \eqref{alteqns}.  Since the factors $g'(\theta)$ and $\kappa$ do
  not depend on $t$, the proof that we obtain a family of curves with
  constant fibre above $0$ is exactly the same as for
  Theorem~\ref{thm2}.

  If $D = D_{E,\xi}$ then by Lemmas~\ref{lem-dualtheta}
  and~\ref{lem:kappa} the fibre above $t = \infty$ is $D_{E^\Delta,
    \sigma_*(\xi)}$.

  Finally we simplify our modified version of~\eqref{alteqns}.  A
  calculation along the same lines as the proof of~\eqref{cross-id}
  shows that
  \[ g'(\theta) g'(\tilde{\theta}) = -16( \theta - \tilde{\theta})^2
  (3 \varphi^2 + \wA). \] Therefore
  \[ g'(\theta) g'(\tilde{\theta}) \kappa \equiv (\theta -
  \tilde{\theta})^2 \Delta \mod{L^{\times2}}, \]
  and this gives the equation~\eqref{keyeqn-rev} as required.
\end{proof}

Exactly as in Section~\ref{sec:twisted_shioda}, we obtain the
following.

\begin{Corollary}
\label{cor-rev}
Suppose that $\xi \in H^1(k,E[4])$ and $D_{E,\xi}$ is given
by~\eqref{oldstart}.  Then the surface $S^-_{E,\xi}(4)$ has a singular
model in $\PP^5$ defined by $3$ quadrics. These quadrics are obtained
from the $[M:\K]=6$ equations in $\K(t)[X,Z,y_1, \ldots,y_6]$ coming
from~\eqref{keyeqn-rev}, by taking linear combinations to eliminate
$X^2, XZ$ and $Z^2$.
\end{Corollary}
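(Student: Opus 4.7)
The plan is to adapt the proof of Corollary~\ref{cor1} with only cosmetic changes. As in that case, the substantive content of the corollary is the assertion that the $3$ quadratic forms in $y_1, \ldots, y_6$ produced by eliminating $X^2, XZ, Z^2$ from the $6$ equations coming from~\eqref{keyeqn-rev} do not in fact depend on $t$. Once this is established, these same $3$ $t$-independent quadrics vanish on every fibre $\D^\vee_t$, and hence cut out a singular model of $S^-_{E,\xi}(4)$ in $\PP^5$ in exactly the manner of Section~\ref{sec:geomS4} (the genus $1$ fibration being recoverable either from~\eqref{eqn:Y} or from Remark~\ref{get-fib}).

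To establish the $t$-independence I would argue as in the proofs of Theorem~\ref{thm2} and Corollary~\ref{cor1}. Expanding~\eqref{keyeqn-rev} in the basis $m_1, \ldots, m_6$ of $M$ over $\K$ yields $6$ equations of the form (quadratic in $X, Z$ over $\K(t)$) $=$ (quadratic in $y_1, \ldots, y_6$ over $\K(t)$). A linear combination of the left hand sides vanishes identically as a polynomial in $X, Z$ if and only if it vanishes at $3$ specialisations of $(X:Z)$ in general position. I would use the specialisations $(X:Z) = (\tilde{\tilde{\theta}}\, t + \tilde{\tilde{\la}} : t + \tilde{\tilde{\mu}})$ indexed by the roots $\tilde{\tilde{\theta}}$ of $g$. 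Applying~\eqref{cross-id} to each relevant pair of roots, the left hand side of~\eqref{keyeqn-rev} evaluated at such a specialisation factors as a $t$-independent scalar times the polynomial $t^2 - 2\varphi t - 2\varphi^2 - \wA$ and its conjugates; by~\eqref{norm-id} this is, up to a $t$-independent factor, exactly the $\nu(t)(\theta-\tilde\theta)^2\Delta$ appearing on the right hand side.

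Consequently the $3$-dimensional space of linear combinations of the $6$ equations that eliminate $X^2, XZ, Z^2$ is spanned by combinations whose coefficients (as functions of $t$) can be chosen constant; the resulting $3$ quadrics in $y_1, \ldots, y_6$ are therefore independent of $t$ and define the desired model in $\PP^5$. The only real obstacle is the bookkeeping of the extra factors $g'(\theta)$ and $\kappa$ which distinguish the reverse case from~\eqref{keyeqnS4}. However, both factors are $t$-independent, and Lemmas~\ref{lem-dualtheta} and~\ref{lem:kappa} have already been used to absorb them into the normalisation of~\eqref{keyeqn-rev}, so the $t$-dependence analysis is unaffected and the argument of Corollary~\ref{cor1} transfers essentially verbatim.
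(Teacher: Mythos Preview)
Your proposal is correct and follows the paper's approach. The paper's proof is simply the remark ``Exactly as in Section~\ref{sec:twisted_shioda}'' together with the observation (made just after the corollary) that compared to the direct case one only multiplies $\nu(t)$ by the $t$-independent factor $(\theta-\tilde\theta)^2$; you have correctly unpacked what this means, invoking the substitution argument of Theorem~\ref{thm2} via~\eqref{cross-id} and~\eqref{norm-id} and noting that the extra factors peculiar to the reverse case are $t$-independent and so leave the proof of Corollary~\ref{cor1} intact.
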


For the purposes of Corollary~\ref{cor-rev} we may ignore the factor
$\Delta \in \K$ in~\eqref{keyeqn-rev}.  So compared to the direct
case, we only need to multiply $\nu(t)$ by a factor $(\theta -
\tilde{\theta})^2$.  This is a Kummer generator for the quadratic
extension $LF/M$.

\section{Polarizations}
\label{sec:polar}

Let $A$ be an abelian surface over a field $\K$ of characteristic $0$.
We write $A^\vee$ for the dual abelian surface.
As is described in, for instance, \cite{milne:abvar}*{Section 13}, there is an injective group homomorphism $\NS(A)\to \Hom(A,A^\vee)$. A \emph{polarization} is a homomorphism that lies in the image of the ample cone. These are isogenies. A \emph{principal polarization} is a polarization that is an isomorphism.

If an abelian variety $A$ has a principal polarization $\lambda_A$,
then the map $\lambda\mapsto\psi_\lambda=\lambda_A^{-1}\lambda$
identifies the set of polarizations with a 
special semigroup in $\End(A)$.

Elliptic curves $E$ have a natural principal polarization $\lambda_E\colon E\to E^\vee$ and on a product of elliptic curves $E\times E'$, the product of these gives a principal product polarization.

If $E, E'$ are two non-isogenous elliptic curves without complex multiplication (CM) then $\End(E\times E')=\End(E)\times\End(E')=\Z\times \Z$. 
For such a surface one has $\NS(E\times E')\simeq \Z\times\Z$, the semigroup of ample classes is $\Z_{>0}\times\Z_{>0}$, and polarizations correspond to the endomorphisms $[n]_E\times[n']_{E'}$, with $n,n'\in\Z_{>0}$.

An abelian surface $A$ is called \emph{decomposable} if it admits a non-constant map to an elliptic curve. In that case the Poincar\'e reducibility theorem
\cite{milne:abvar}*{Proposition~12.1} gives us that there are two elliptic curves $E,E'\subset A$, such that the natural map $\phi\colon E\times E'\to A$ is an isogeny. We call such an isogeny an \emph{optimal} decomposition.

In this section we are interested in determining when such a surface $A$ may admit a principal polarization $\lambda_A$. If it does, we have a polarization  $\phi^*(\lambda_A)=\phi^\vee\lambda_A\phi$ on $E\times E'$ of degree $\deg(\phi)^2$.

Write $e_n$ for the product of the Weil pairings on $(E\times E')[n]$. We paraphrase \cite{milne:abvar}*{Proposition~16.8}.

\begin{Proposition}\label{P:polarization-pullback}
  Let $\phi\colon E\times E'\to A$ be an isogeny and $\Delta = \ker \phi$.
  Let $\lambda$ be a polarization on $E\times E'$. 
  Suppose that $\Delta \subset \ker \lambda  \subset (E\times E')[n]$.
  Then $\lambda=\phi^*(\lambda')$
  for some polarization $\lambda'$ on $A$ if and only if the Weil
  pairing $e_n$ restricts to the trivial pairing on $\Delta\times
  \psi_\lambda(\frac{1}{n}\Delta)$.
\end{Proposition}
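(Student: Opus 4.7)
The plan is to reduce the assertion to classical descent theory for ample line bundles along isogenies, and then translate the resulting condition into Weil-pairing language by exploiting the principal polarization $\lambda_B$ on $B = E\times E'$.

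First I would invoke the standard descent criterion (proved by Mumford using theta groups, and recorded in essentially the form needed as \cite{milne:abvar}*{Proposition~16.8}): an ample line bundle $M$ on $B$ with associated polarization $\lambda$ descends along $\phi\colon B \to A$ to an ample line bundle on $A$ if and only if $\Delta \subset K(M) = \ker\lambda$ and $\Delta$ is isotropic with respect to the commutator pairing $e^\lambda\colon \ker\lambda \times \ker\lambda \to \Gm$ coming from the theta group of $M$. Since $\Delta\subset\ker\lambda$ is given by hypothesis, this reduces Proposition~\ref{P:polarization-pullback} to showing that $e^\lambda|_{\Delta\times\Delta}$ is trivial if and only if $e_n$ is trivial on $\Delta \times \psi_\lambda(\tfrac{1}{n}\Delta)$.

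The bridge between the two pairings is the identity
\[
 e^\lambda(x, y) \;=\; e_n\bigl(x,\, \psi_\lambda(\tilde y)\bigr) \qquad (x, y \in \ker\lambda),
\]
where $\tilde y \in B(\Kbar)$ is any point with $n\tilde y = y$. The expression makes sense because $\ker\lambda\subset B[n]$ guarantees the existence of $\tilde y$, and $n\psi_\lambda(\tilde y) = \psi_\lambda(y) = 0$ places $\psi_\lambda(\tilde y)$ in $B[n]$. To prove the identity I would start from Weil's construction $e^\lambda(x,y) = e_n(x, \lambda(\tilde y))$, valid for the Weil pairing $e_n\colon B[n]\times B^\vee[n]\to\mu_n$, factor $\lambda = \lambda_B\circ\psi_\lambda$, and use $\lambda_B$ to convert the second slot from $B^\vee[n]$ to $B[n]$; the pairing so obtained on $B[n]\times B[n]$ is precisely the product of Weil pairings that the paper calls $e_n$. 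Independence of $\tilde y$ is the assertion that $e_n(x,\psi_\lambda(z)) = 1$ for $x\in\ker\lambda$ and $z\in B[n]$, which follows from the symmetry of $\psi_\lambda$ under the Rosati involution attached to $\lambda_B$.

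Granting the identity, the proposition follows immediately: as $y$ ranges over $\Delta$ and $\tilde y$ over the fibre of $[n]$ above $y$, the element $\psi_\lambda(\tilde y)$ ranges over $\psi_\lambda(\tfrac{1}{n}\Delta)$, so $e^\lambda|_{\Delta\times\Delta} \equiv 1$ is equivalent to $e_n|_{\Delta\times\psi_\lambda(\tfrac{1}{n}\Delta)} \equiv 1$. The main obstacle is precisely the bridging identity --- in particular tracking the duality and sign conventions so that Mumford's theta-group commutator pairing matches the displayed formula --- and this is essentially the computation behind Milne's argument, repackaged in the notation $\psi_\lambda = \lambda_B^{-1}\lambda$ used here.
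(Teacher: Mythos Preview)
The paper does not actually prove this proposition: it is introduced with the sentence ``We paraphrase \cite{milne:abvar}*{Proposition~16.8}'' and no argument is supplied. Your proposal is therefore not competing with a proof in the paper but rather supplying the translation the paper leaves implicit, and it does so correctly. The descent criterion you quote is precisely Milne's statement, and your bridging identity $e^\lambda(x,y)=e_n\bigl(x,\psi_\lambda(\tilde y)\bigr)$ is the right way to rewrite the theta-group commutator pairing in terms of the product Weil pairing once one uses $\lambda=\lambda_B\circ\psi_\lambda$; your check of independence of $\tilde y$ via the Rosati-symmetry of $\psi_\lambda$ (reducing to $e_n^{\mathrm{can}}(z,\lambda(x))=1$ for $x\in\ker\lambda$) is also sound.
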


\begin{Proposition}\label{P:noprincpol}
  Let $A$ be a principally polarized decomposable abelian surface, with optimal decomposition $\phi\colon E\times E'\to A$. Suppose that $E, E'$ are non-isogenous and have no CM. Then the kernel of $\phi$ is the graph of a reverse $n$-congruence, where $\deg(\phi)=n^2$.
\end{Proposition}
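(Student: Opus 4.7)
The plan is to combine the classification of polarizations on $E\times E'$ available under our hypotheses with the criterion in Proposition~\ref{P:polarization-pullback}. By assumption $\End(E\times E') = \Z\times\Z$, so $\NS(E\times E') \cong \Z\times\Z$ and every polarization on $E\times E'$ has the form $[a]_E\times[b]_{E'}$ with $a,b\in\Z_{>0}$. First I would set $\lambda := \phi^*(\lambda_A)$, which is a polarization because $\lambda_A$ is one and $\phi$ is an isogeny, and so write $\lambda = [a]_E\times[b]_{E'}$. Comparing degrees with $\deg(\lambda_A)=1$ gives $\deg(\phi)^2 = \deg(\lambda) = a^2b^2$, hence $\deg\phi = ab$, and $\ker(\lambda) = E[a]\times E'[b]$.

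Next I would exploit optimality of $\phi$. Because $E,E'$ sit inside $A$ and $\phi$ restricts to the inclusions on $E\times\{0\}$ and $\{0\}\times E'$, the kernel $\Delta := \ker\phi$ has trivial intersection with both of these factors. Therefore both projections $\Delta\to E[a]$ and $\Delta\to E'[b]$ are injective, so $|\Delta| = ab$ divides both $|E[a]|=a^2$ and $|E'[b]|=b^2$. This forces $b\mid a$ and $a\mid b$, hence $a=b=:n$. The injections then become isomorphisms onto $E[n]$ and $E'[n]$, so $\Delta$ is the graph of an isomorphism $\sigma\colon E[n]\to E'[n]$.

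Finally I would invoke Proposition~\ref{P:polarization-pullback} to read off the Weil pairing compatibility. Here $\psi_\lambda = [n]_E\times[n]_{E'}$ is multiplication by $n$ on $E\times E'$, so $\psi_\lambda(\tfrac{1}{n}\Delta) = [n]\cdot\tfrac{1}{n}\Delta = \Delta$. Since $\lambda = \phi^*(\lambda_A)$ does come from a polarization on $A$, the proposition says that the Weil pairing $e_n$ on $(E\times E')[n]$ is trivial on $\Delta\times\Delta$. Using that $e_n$ is the product pairing $e_n^E\cdot e_n^{E'}$ and evaluating on pairs $(P,\sigma P),(Q,\sigma Q)\in\Delta$ gives $e_n^E(P,Q)\,e_n^{E'}(\sigma P,\sigma Q) = 1$ for all $P,Q\in E[n]$, which is exactly the statement that $\sigma$ is a reverse $n$-congruence.

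The main obstacle is to extract the usable condition from Proposition~\ref{P:polarization-pullback}; the key simplification is that here $\psi_\lambda$ is multiplication by $n$, which collapses the a priori larger set $\psi_\lambda(\tfrac{1}{n}\Delta)$ back to $\Delta$ itself and thereby converts the criterion into a clean bilinear condition on the graph of $\sigma$. Once this is in place, the rest is bookkeeping: comparing degrees, using optimality to force injectivity of the projections (and therefore $a=b$), and matching the resulting Weil pairing identity with the definition of a reverse congruence given in Proposition~\ref{P:n-congruence_mu-auto}.
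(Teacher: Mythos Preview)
Your proof is correct and follows essentially the same route as the paper's: pull back the principal polarization, use $\End(E\times E')=\Z\times\Z$ to write it as $[a]\times[b]$, exploit optimality and the containment $\Delta\subset\ker\lambda=E[a]\times E'[b]$ to force $a=b=n$ via a divisibility/order count, and then read off the reverse condition from Proposition~\ref{P:polarization-pullback} using $\psi_\lambda=[n]$. The only cosmetic difference is that the paper tracks the invariant factors $d_1,d_2$ of $\Delta$ separately before collapsing everything to $n$, whereas you argue directly with $|\Delta|=ab$; your version is slightly more streamlined but the substance is identical.
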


\begin{proof}
Since $\ker \phi $ intersects trivially with $E\times\{0\}$ and $\{0\}\times E'$, we have that
$\ker \phi \isom \Z/d_1\Z \times \Z/d_2\Z$ for some positive
integers $d_1$ and $d_2$. 
The principal polarization on $A$ pulls back to a
polarization $\lambda$ of degree $d_1^2 d_2^2$.
It follows that $\psi_\lambda$ must be an endomorphism of the same
degree. Therefore $\psi_\lambda=[n]_E\times[n']_{E'}$ for some positive
integers $n$ and $n'$ with $n n' = d_1 d_2$. Let $\operatorname{pr}_1:
 E \times E' \to E$ be the first projection. Since
 $\ker \phi \subset \ker \lambda$ we have $\ker \phi \isom \operatorname{pr}_1(\ker \phi)
 \subset E[n]$. Therefore $d_1 \mid n$ and $d_2 \mid n$. The same argument
 shows that $d_1 \mid n'$ and $d_2 \mid n'$. Since $n n'= d_1 d_2$ it follows
 that $d_1 = d_2 = n = n'$. Hence we have that $\Delta = \ker \phi$
is the graph of an isomorphism $\sigma\colon E[n]\to E'[n]$.

  We see that
 $\Delta\subset\psi_\lambda(\frac{1}{n}\Delta)$, so
 Proposition~\ref{P:polarization-pullback} implies that
 $e_{n}(\Delta,\Delta)=1$.
In particular, if we have points $T_1=(t_1,\sigma(t_1))$ and $T_2=(t_2,\sigma(t_2))$ belonging to $\Delta$ then by Proposition~\ref{P:n-congruence_mu-auto} we have that
\[1=e_n(T_1,T_2)=e_{E[n]}(t_1,t_2)\tau_\sigma(e_{E[n]}(t_1,t_2)).\]
Therefore $\tau_\sigma(\zeta)=\zeta^{-1}$, i.e. the $n$-congruence $\sigma$ is indeed a reverse $n$-congruence.
\end{proof}

\begin{Lemma}
	\label{L:congruence_restriction}
	If $\sigma\colon E[n]\to E'[n]$ is a reverse $n$-congruence, then the restriction $\sigma'\colon E[d]\to E'[d]$ for any $d\mid n$ is also a reverse congruence.
\end{Lemma}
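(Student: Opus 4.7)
The plan is to identify $\tau_{\sigma'} \in \Aut(\mu_d)$ with the restriction of $\tau_\sigma \in \Aut(\mu_n)$ to the subgroup $\mu_d \subset \mu_n$. Granting this, the hypothesis that $\sigma$ is reverse --- meaning $\tau_\sigma$ is inversion on $\mu_n$ --- restricts to inversion on $\mu_d$, and $\sigma'$ is reverse.

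To set this up, I would write $n = dm$ and pick a $\Z/n\Z$-basis $(P_1, P_2)$ of $E[n](\Kbar)$; then $(mP_1, mP_2)$ is a basis of $E[d](\Kbar)$, and transporting by $\sigma$ and $\sigma'$ gives bases of $E'[n](\Kbar)$ and $E'[d](\Kbar)$. Set $\zeta = e_n(P_1, P_2)$, $\eta = e_d(mP_1, mP_2)$, $\zeta' = e'_n(\sigma P_1, \sigma P_2)$, and $\eta' = e'_d(\sigma'(mP_1), \sigma'(mP_2))$; by non-degeneracy these are primitive $n$th and $d$th roots of unity, and therefore generate $\mu_n$ and $\mu_d$. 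Proposition~\ref{P:n-congruence_mu-auto} gives $\zeta' = \tau_\sigma(\zeta)$ and $\eta' = \tau_{\sigma'}(\eta)$, and the reverse hypothesis on $\sigma$ reads $\zeta' = \zeta^{-1}$.

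The key input will be the compatibility of Weil pairings at the two levels, $e_d(mP_1, mP_2) = e_n(P_1, mP_2)$, which combined with bilinearity of $e_n$ in the second argument yields the sharp equality $\eta = \zeta^m$. This compatibility comes out of the divisor-theoretic definition of the Weil pairing: if $f$ on $E$ satisfies $\operatorname{div}(f) = d(mP_2) - d(O)$ and $g^d = f \circ [d]$, then $f^m$ has divisor $n(mP_2) - n(O)$ and $(g \circ [m])^n = f^m \circ [n]$, so $g \circ [m]$ serves as a valid auxiliary function at level $n$; writing out both Weil pairing formulas gives the same quotient $g(\,\cdot\, + mP_1)/g(\,\cdot\,)$. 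Applying the analogous compatibility on $E'$ then yields $\eta' = \zeta'^m = \zeta^{-m} = \eta^{-1}$, so $\tau_{\sigma'}(\eta) = \eta^{-1}$ and hence $\tau_{\sigma'}$ is inversion on $\mu_d$, as required.

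I expect the main obstacle to be pinning down the sharp equality $\eta = \zeta^m$: na\"ive bilinearity of $e_n$ alone yields only $\eta^m = \zeta^{m^2}$, determining $\eta$ up to a factor in $\mu_{\gcd(d,m)}$, which is trivial when $\gcd(d, m) = 1$ but not in general (e.g.\ for $n = 8$, $d = 4$). Removing this ambiguity requires the divisor-theoretic argument above, i.e., making consistent choices of the defining functions at both levels.
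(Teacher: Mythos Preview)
Your proposal is correct and follows essentially the same route as the paper. The paper's proof is a two-line application of the compatibility $e_{dm}(T_1,T_2)^m = e_d(mT_1,mT_2)$, cited as a ``basic property of Weil pairings''; you arrive at the same identity $\eta = \zeta^m$ via the intermediate form $e_d(mP_1,mP_2) = e_n(P_1,mP_2)$ and supply the divisor-theoretic justification that the paper omits. Your discussion of the ``expected obstacle'' is really just an explanation of why the sharp compatibility (rather than na\"ive bilinearity) is needed, and you resolve it in the same paragraph, so there is no actual gap.
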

\begin{proof}
Let $n=dm$. If $T_1,T_2$ are generators of $E[n]$, then $mT_1,mT_2$ are generators of $E[d]$. The result follows from the basic property of Weil pairings that
\[e_{dm}(T_1,T_2)^m=e_d(mT_1,mT_2).\qedhere\]
\end{proof}

\section{Examples}
\label{S:examples}

We first give an example showing how our methods improve
on~\cite{g1hessians}. We then give an example where $\Sha(E/\Q)[4]$ is
made visible by a second elliptic curve $E'$, but our methods are
needed to find $E'$.  Finally we give some examples of $4$-torsion in
$\Sha(E/\Q)$ that cannot be made visible in a principally polarized
abelian surface.  We do this by exhibiting some twists of $S(4)$ that
are not everywhere locally soluble.

We refer to elliptic curves by their labels in Cremona's tables.

\begin{Example}
\label{ex1}
Let $E$ and $E'$ be the elliptic curves 96266a1 and 96266b1.  We have
$E(\Q) = 0$ and $E'(\Q) \isom \Z^2$. In this case there is a direct
$4$-congruence, and the Mordell-Weil group of $E'$ explains a subgroup
$(\Z/4\Z)^2 \subset \Sha(E/\Q)$.  We verify this for one element of
$\Sha(E/\Q)$ of order $4$, the other cases being similar. The element
we consider is represented by $C = \{Q_1 = Q_2=0\} \subset \PP^3$
where
\begin{align*}
  Q_1 &= x_1^2 + 3 x_1 x_2 + x_1 x_3 + x_1 x_4 - x_2^2 + 2 x_2 x_3 + 2
  x_2 x_4 -
  2 x_3^2 - x_3 x_4 - 3 x_4^2, \\
  Q_2 &= x_1^2 - 3 x_1 x_2 + 2 x_1 x_3 - 6 x_1 x_4 + 3 x_2^2 - 4 x_2
  x_3 + 2 x_2 x_4 - 4 x_3^2 - 2 x_3 x_4 - 2 x_4^2.
\end{align*} 
One of the elements of $E'(\Q)/4E'(\Q)$ of order $4$ maps to the
$4$-covering $C' = \{Q'_1 = Q'_2=0\} \subset \PP^3$ where
\begin{align*}
  Q'_1 &=
  x_1 x_2 + 2 x_1 x_3 + x_2 x_3 - x_2 x_4 + 2 x_3 x_4 + 6 x_4^2, \\
  Q'_2 &= x_1 x_2 + x_1 x_3 + 3 x_1 x_4 + 2 x_2^2 - 2 x_2 x_3 - x_2
  x_4 + 2 x_3^2 - 3 x_3 x_4 + 4 x_4^2.
\end{align*} 
Starting from either $C$ or $C'$, and computing a twist of Shioda's
modular surface using the method described in Corollary~\ref{cor1}, we
obtain
\[ S : \quad \left\{
  \begin{aligned}
    2 y_1 y_4 - 2 y_1 y_5 - 2 y_2 y_6 + y_3^2 - y_5^2 + y_6^2 &= 0 \\
    2 y_1^2 -
    2 y_1 y_3 - y_1 y_5 + y_1 y_6  - 2 y_2 y_3 \hspace{11em} & \\
    + 2 y_2 y_4 - y_2 y_5 + y_2 y_6 - y_3^2
    + y_3 y_5 + y_4 y_6 + y_5 y_6 - y_6^2 & = 0 \\
    y_1^2 + 4 y_1 y_3 - 2 y_1 y_4 - 2 y_1 y_5 + 2 y_1 y_6 + 2 y_2^2 +
    2 y_2 y_3 + 2 y_2 y_4 \quad & \\ - 2 y_2 y_5 + y_3^2 + 2 y_3 y_4 -
    2 y_3 y_6 + 2 y_4 y_5 - 2 y_5^2 + 2 y_5 y_6 + y_6^2 & = 0
\end{aligned} \right\} \subset \PP^5. \]
The embeddings $C \to S$ and $C' \to S$ are given by
\[
\begin{pmatrix}
y_{1} \\ y_{2} \\ y_{3} \\ y_{4} \\ y_{5} \\ y_{6}
\end{pmatrix} = 
\begin{pmatrix}
 30 & 16 & 12 & 22 & 10 &   8 \\
 -3 &-12 & 2 & -23 & -33 &-14 \\
 -7 & -4 & -6 & 37 & -13 & 6 \\
 41 & -4 & -38 & 21 & 3 & 6 \\
 22 & 40 & -36 & 30 & -14 & 4 \\ 
 10 & 24 & -28 & 2 & -18 & 28 
\end{pmatrix} 
\begin{pmatrix}
f_{12} \\ f_{13} \\ f_{14} \\ f_{23} \\ f_{24} \\ f_{34}
\end{pmatrix}
\]
and
\[
\begin{pmatrix}
y_{1} \\ y_{2} \\ y_{3} \\ y_{4} \\ y_{5} \\ y_{6}
\end{pmatrix} = 
\begin{pmatrix}
 -2 & -10 & -4 & 6 & 2 & 0 \\
 10 &   3 & 3 & -2 & 0 & -5 \\
-20 & 1 & -1 & 4 & 2 & 1 \\
-16 & -11 & 7 & 8 & -6 & 1 \\
-20 & -16 & -4 & 0 & -4 & -2 \\
 -8 & -16 & 4 & -2 & 2 & 0 
\end{pmatrix} 
\begin{pmatrix}
f'_{12} \\ f'_{13} \\ f'_{14} \\ f'_{23} \\ f'_{24} \\ f'_{34}
\end{pmatrix} \]
where
\[ f_{ij} = \frac{ \partial(Q_1,Q_2) }{ \partial(x_i,x_j) } \quad
\text{ and } \quad f'_{ij} = \frac{ \partial(Q'_1,Q'_2)
}{ \partial(x_i,x_j) }. \] It may be checked that these maps send the
flex points on $C$ and $C'$ to the singular points of $S$. In
particular $C$ and $C'$ correspond to the same element of
$H^1(\Q,E[4]) = H^1(\Q,E'[4])$.

Suppose instead that we use invariant theory. Let $(Q'_1,Q'_2)$ have
Hessian $(Q''_1,Q''_2)$. Then the quadric intersection $\{-281 Q'_1 +
Q''_1 = -281 Q'_2 + Q''_2 = 0\} \subset \PP^3$ is a $4$-covering of
$E$. However this $4$-covering is not locally soluble at
$2$. Therefore the method in \cite{g1hessians} for computing visible
elements of $\Sha(E/\Q)$ of order $4$ does not apply.  This is because
the shift is not locally soluble at $2$.
\end{Example}

The equations for $C$, $C'$ and $S$ in Example~\ref{ex1} were
simplified by making careful choices of coordinates. This was achieved
by a combination of {\em minimisation} and {\em reduction}.  For
quadric intersections (such as $C$ and $C'$) these processes are
described in \cite{minred234}.  We make some brief comments on how
this works for $S$.

The minimisation step relies on defining a suitable invariant.  The
discriminant of a binary cubic form $f(x,y) = a x^3 + b x^2 y + c x
y^2 + d y^3$ is
\[ \Delta(f) = -27 a^2 d^2 + 18 a b c d - 4 a c^3 - 4 b^3 d + b^2
c^2. \] Let $q_1,q_2 \in \K[z_1,z_2,z_3]$ be a pair of quadratic forms
with corresponding $3 \times 3$ symmetric matrices $B_1, B_2$. Then
$f(x,y) = \det(B_1 x + B_2 y)$ is a binary cubic form.  We define
$\Delta(q_1,q_2) = \Delta(f)$.  Now let $Q_1,Q_2,Q_3$ be quadratic
forms defining a twist of the surface $S_0$ in
Section~\ref{sec:geomS4}.  Writing $A_1,A_2,A_3$ for the corresponding
$6 \times 6$ symmetric matrices we find that
\begin{equation}
\label{fqq}
\det (A_1 z_1 + A_2 z_2 + A_3 z_3) = f(q_1,q_2)
\end{equation}
where $f$ is a binary cubic and $q_1,q_2 \in k[z_1,z_2,z_3]$ are
quadratic forms. We define $\Delta(Q_1,Q_2,Q_3) = \Delta(f)
\Delta(q_1,q_2)$.  This definition is independent of the choices of
$f, q_1, q_2$, provided that they satisfy~\eqref{fqq}.

Now let $S \subset \PP^5$ be a twist of $S_0$ defined over $\Q$.
Clearing denominators we may assume that $S$ is defined by
$Q_1,Q_2,Q_3 \in \Z[y_1, \ldots ,y_6]$. Then the discriminant $\Delta
= \Delta(Q_1,Q_2,Q_3)$ is a non-zero integer. Using the natural action
of $\GL_3(\Q) \times \GL_6(\Q)$ we seek to minimise $|\Delta|$, while
preserving that the coefficients of the $Q_i$ are integers. This
process is carried out one prime at a time, the idea being that for
each prime $p$ dividing $\Delta$ we consider the scheme defined by the
reductions of the $Q_i$ mod $p$.  We did not work out algorithms
guaranteed to minimise $|\Delta|$, but rather implemented some methods
that seem to work reasonably well in practice.

The reduction step relies on defining a suitable inner product.
Specifically we take the inner product (unique up to scalars) that is
invariant under the action of $\ASL(\Z/4\Z \times \mu_4)$. For the
surface $S_0$ in Section~\ref{sec:geomS4} this is the standard inner
product.  For general $S$ we reduce to this case by finding a change
of co-ordinates over $\C$ relating $S$ and $S_0$.  Performing lattice
reduction on the Gram matrix of the inner product then gives a change
of coordinates in $\GL_6(\Z)$ that may be used to simplify our
equations for $S$.

In preparing Example~\ref{ex1} we also had to find the change of
co-ordinates relating the surfaces constructed from $C$ and $C'$.
However it was easy to solve for this as the unique change of
co-ordinates defined over $\Q$ taking the singular points to the
singular points.

\begin{Example}
  Let $E$ be the elliptic curve 31252a1. We have $E(\Q)=0$ and
  $\Sha(E/\Q)[4] \isom (\Z/4\Z)^2$.  One of the elements of order $4$
  in $\Sha(E/\Q)$ is represented by the $4$-covering $\{ Q_1 = Q_2 = 0
  \} \subset \PP^3$ where
\begin{align*}
  Q_1 & =
  2 x_1 x_2 + 2 x_1 x_3 + x_3^2 + 2 x_4^2 \\
  Q_2 & = 6 x_1^2 + 6 x_1 x_2 - 14 x_1 x_3 + 9 x_1 x_4 + 11 x_2^2 + 10
  x_2 x_3 - 31 x_2 x_4 + 3 x_3^2 + 22 x_3 x_4 + 7 x_4^2
\end{align*}
Corollary~\ref{cor-rev} gives the following reverse twist of Shioda's
modular surface.
\[ S : \quad \left\{
  \begin{aligned}
    2 y_1 y_3 + 2 y_1 y_5 + 2 y_1 y_6 + 2 y_2^2 
        - 2 y_2 y_4 - 2 y_3 y_4 + y_5^2 & = 0 \\
    y_1^2 - 2 y_1 y_2 - y_1 y_4 + 2 y_1 y_5 + y_2^2
      + y_2 y_3 \hspace{5em} & \\  - y_2 y_4  + y_2 y_5 + y_2 y_6 - 
        y_3 y_5 + y_3 y_6 - y_4^2 - y_4 y_6 &= 0 \\
    y_1^2 - y_1 y_3 + 2 y_1 y_4 - y_1 y_5 - y_1 y_6 
    + 2 y_2 y_3 \hspace{4em} & \\ + 3 y_2 y_4 + y_3^2 + y_3 y_5 + 
        y_4 y_5 + y_4 y_6 - y_5^2 - y_5 y_6 & = 0
      \end{aligned} \right\} \subset \PP^5. \] A useful check on our
    calculations is that the flex points on $C$ and the singular
    points of $S$ have the same field of definition. The genus $1$
    fibration on $S$ may be computed as described in
    Remark~\ref{get-fib}.  We searched for rational points on $S$ of
    small height. Among the points we found were
    \begin{align*} &( 3 : 0 : -1 : -2 : -2 : 3 ), \quad
      ( 1 : -8 : 3 : -22 : -6 : 31 ), \\
      &\quad ( -33 : 13 : -25 : 23 : 34 : 22 ), \quad ( 21 : -10 : -17
      : -26 : -14 : 55 ),
    \end{align*}
    all lying on a fibre isomorphic to
    $E': y^2 = x^3 + 10609x + 58646$ with $E'(\Q) \isom \Z^3$. The
    elliptic curves $E$ and $E'$ are reverse $4$-congruent. It turns
    out that all of $\Sha(E/\Q)[4]$ is explained by $E'(\Q)$. The
    conductors of $E$ and $E'$ are $31252 = 2^2 \cdot 13 \cdot 601$
    and $2468908 = 2^2 \cdot 13 \cdot 79 \cdot 601$.  In particular
    $E'$ is beyond the range of any current tables of elliptic
    curves. (In fact $E$ is $2$-congruent to a rank 2 elliptic curve
    of the same conductor, but these curves are not $4$-congruent.)
\end{Example}

Finally we give some examples where our twists of $S(4)$ are not
locally soluble. As explained in the introduction, this can only
happen in the reverse case.  In Table~\ref{tab:insol} we list some
elliptic curves $E/\Q$ with $E(\Q) = 0$ and $\Sha(E/\Q)[4] \isom
(\Z/4\Z)^2$. In each case, for {\em some} of the elements
 $\xi \in H^1(\Q,E[4])$, representing an element of $\Sha(E/\Q)$ 
of order $4$, the surface $S_{E,\xi}^-(4)$ has no points
locally at $p$, where $p$ is the prime indicated.

\begin{table}[ht]
\label{tab:insol}
\caption{Some elliptic curves $E$ for which there exists $\xi\in H^1(\Q,E[4])$ 
with $[C_{E,\xi}] \in \Sha(E/\Q)$, yet $S^-_{E,\xi}(4)(\Q_p)=\emptyset$. Proposition~\ref{P:invis} establishes that $[C_{E,\xi}]$ is not visibile in a principally polarized abelian surface.}
$\begin{array}{|c|l|} \hline
p = 2 &  21720c1, 26712e1, 32784c1, 32816j1, 33536e1, 34560o1, 37984e1, \\ 
  & 40328b1, 47664p1, 49176b1, 59248g1, 62328bj1, 69192f1, 69312ch1, \\ & 69312dp1, 
    73600bn1, 73840a1, 74368b1, 77440cl1, 77440cr1, 77600p1 \\ \hline
    p = 5 & 23950g1, 60725j1, 63825g1, 64975e1, 72600df1, 76175e1, 90450bs1, \\
 & 105350z1, 120300n1, 121950ca1, 129850r1, 133950cy1, 137025s1, \\ & 141200bf1, 
    146700p1, 153425u1, 153425bd1, 154850m1, 154850m2\\ \hline
    p = 13 & 56446n1, 62192t1, 70135c1, 100386g1, 104442w1, 124384g1, \\
& 132496df1, 172042o1, 200772u1, 216151f1, 226629g1, 256880dn1, \\
& 294060j1, 306735z1, 321945v1, 331240cy1, 335296dj1, 337155x1 \\ \hline
    p = 29 & 220342v1, 277530bc1, 277530bs1, 323785n1, 364994k1 \\ \hline
    p = 37 & 370999a1 \\ \hline
    p = 61 & 301401k1, 260470l1, 260470l2 \\ \hline 
    p = 101 & 306030bg1, 306030bg2 \\ \hline
\end{array}$
\end{table}

\begin{Proposition}
\label{P:invis}
Each of the
elliptic curves $E/\Q$ in Table~\ref{tab:insol} has an
element
of order $4$ in $\Sha(E/\Q)$ that cannot be made visible in a
principally polarized abelian surface over $\Q$.
\end{Proposition}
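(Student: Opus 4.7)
The plan is to argue by contradiction using Proposition~\ref{P:noprincpol} together with the local insolubility of $S^-_{E,\xi}(4)$ that was verified in producing Table~\ref{tab:insol}. Suppose $E$ is a curve from the table, $\xi \in H^1(\Q,E[4])$ is the corresponding order-$4$ class, and that $[C_{E,\xi}]$ is visible in some principally polarized abelian surface $A/\Q$. Visibility supplies an embedding $E \hookrightarrow A$, so $A$ is decomposable, and the Poincar\'e reducibility theorem (exactly as invoked in the proof of Proposition~\ref{P:noprincpol}) produces an elliptic curve $E'/\Q \subset A$ together with an optimal isogeny $\phi\colon E \times E' \to A$ extending the given embedding of $E$. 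The kernel $\Delta = \ker\phi$ meets each factor trivially, hence is the graph of an isomorphism $\sigma\colon E[m] \to E'[m]$ for some $m \geq 1$. Because $\xi$ has order exactly $4$, visibility forces $4 \mid m$ and $(\sigma|_{E[4]})_{\ast}(\xi) = 0$ in $H^1(\Q,E')$.

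The next step is to verify the hypotheses of Proposition~\ref{P:noprincpol} in this setup, namely that $E$ and $E'$ are non-isogenous and have no complex multiplication. For each of the listed curves one checks that $j(E)$ is not a CM $j$-invariant, which settles CM for $E$. For $E'$, one argues that the mod-$4$ Galois representation of $E$ is large enough to be incompatible with the smaller Galois image forced on a CM curve, so no CM companion $E'$ can be $4$-congruent to $E$. Finally the non-isogenous condition is handled by traversing the (finite) isogeny class of $E$ using Cremona's tables: for each elliptic curve $E''$ isogenous to $E$, a short Mordell--Weil/Selmer computation shows that $E''(\Q)/4E''(\Q)$ cannot account for $(\sigma|_{E[4]})_{\ast}(\xi)$, excluding visibility via an isogenous companion.

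Granted these hypotheses, Proposition~\ref{P:noprincpol} applies and yields that $\sigma$ is a \emph{reverse} $m$-congruence. By Lemma~\ref{L:congruence_restriction}, the restriction $\sigma|_{E[4]}$ is then a reverse $4$-congruence. Hence $(E',\sigma|_{E[4]})$ determines a $\Q$-rational point of the moduli space $Y_E^-(4)$, and this point lifts to a $\Q$-rational point of the twisted Shioda surface $S^-_{E,\xi}(4)$ because the fibre above it represents the trivial class in $H^1(\Q,E')$. This contradicts the fact that $S^-_{E,\xi}(4)(\Q_p) = \emptyset$ for the prime $p$ recorded in Table~\ref{tab:insol}, which was established by writing down the singular model from Corollary~\ref{cor-rev} and checking local solubility explicitly.

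The main obstacle is the second paragraph: for each of the roughly one hundred curves in the table one must carry out the computational verifications that the mod-$4$ Galois representation is large enough and that no isogenous companion provides visibility. These checks are routine but data-intensive; once they are in place, the rest of the argument is a direct application of Proposition~\ref{P:noprincpol}, Lemma~\ref{L:congruence_restriction}, and the local insolubility encoded in the construction of Table~\ref{tab:insol}.
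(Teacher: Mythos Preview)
Your strategy is right, but the argument as written has a circularity and an unjustified key step. You assert that $\Delta=\ker\phi$ is the graph of an isomorphism $\sigma\colon E[m]\to E'[m]$ of \emph{full} $m$-torsion subgroups, but trivially intersecting each factor only gives an isomorphism between Galois-stable subgroups of $E$ and $E'$; that these are all of $E[m]$ is precisely the conclusion of Proposition~\ref{P:noprincpol}, whose hypotheses (non-CM, non-isogenous) you establish only afterwards --- and your non-CM argument for $E'$ explicitly uses the very $4$-congruence in question. More seriously, you assert without proof that $(\sigma|_{E[4]})_*(\xi)=0$ in $H^1(\Q,E')$. Visibility in $A$ only says that the image of $\xi$ in $H^1(\Q,E)$ dies in $H^1(\Q,A)$; it is a genuine further step to show that the specific lift $\xi\in H^1(\Q,E[4])$, transported to $H^1(\Q,E'[4])$ via $\sigma|_{E[4]}$, lies in the image of $E'(\Q)/4E'(\Q)$.

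The paper resolves both issues simultaneously by invoking \cite{fisher:invis}*{Theorem~3.1}: under the hypothesis $E(\Q)/2E(\Q)=0$ (verified for every curve in the table), that theorem directly supplies a $2^l$-congruence $\sigma\colon E[2^l]\to E'[2^l]$ with $l\ge2$, graph contained in $\ker\phi$, together with a point $P'\in E'(\Q)$ satisfying $[C_{E,\xi}]=\pi(P')$. From $P'$ one reads off that $P'$ has order $4$ in $E'(\Q)/2^lE'(\Q)$, hence $\rank E'(\Q)>0$ (giving $E\not\sim E'$ immediately from the rank discrepancy, so your isogeny-class traversal is unnecessary) and $P'\in 2^{l-2}E'(\Q)$, so that $\xi$ is accounted for in $E'(\Q)/4E'(\Q)$ via the restriction of $\sigma$ to level $4$. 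Only after this are Proposition~\ref{P:noprincpol} and Lemma~\ref{L:congruence_restriction} applied to force the congruence to be reverse and reach the contradiction with $S^-_{E,\xi}(4)(\Q_p)=\emptyset$.
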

\begin{proof}
  By construction, there exists $\xi \in H^1(\Q,E[4])$ such that
  $[C_{E, \xi}] \in \Sha(E/\Q)$ has order $4$, yet
  $S^-_{E,\xi}(4)(\Q_p)= \emptyset$.  Let us now assume that
  $[C_{E,\xi}]$ is visible in an abelian surface $A$, i.e., that there
  is an injection $E\to A$ such that $[C_{E,\xi}]$ lies in
  the kernel of the induced map on Galois cohomology $H^1(\Q,E)\to
  H^1(\Q,A)$.

  As described in Section~\ref{sec:polar}, there is an elliptic curve
 $E' \subset A$ and an optimal decomposition $\phi : E \times E' \to A$.
  In particular,
  the kernel of $\phi$ is the graph of an isomorphism between finite subgroups
  of $E$ and $E'$. 

  For each of the elliptic curves on our list we have $E(\Q)/2 E(\Q)=0$,
  equivalently $\rank E(\Q)= 0$ and $E[2]$ is irreducible.
  By \cite{fisher:invis}*{Theorem 3.1}, there exists,
  for some $l\geq 2$, a congruence $\sigma\colon E[2^l]\to E'[2^l]$
  such that the graph of $\sigma$ is contained in $\ker \phi$, and
  $[C_{E,\xi}] = \pi(P')$ for some $P' \in E'(\Q)$, where $\pi$ is the
  diagonal map in the following commutative diagram \[ \xymatrix{
    E(\Q)/2^l E(\Q) \ar[r] & H^1(\Q,E[2^l]) \ar[r] \ar@{=}[d] &
    H^1(\Q,E)[2^l] \ar[r] & 0 \\ E'(\Q)/2^l E'(\Q) \ar[r]
    \ar[urr]_(0.7){\pi} & H^1(\Q,E'[2^l]) \ar[r] & H^1(\Q,E')[2^l]
    \ar[r] & 0 } \]
  Since $E(\Q)/2^l E(\Q) =0$, we see that $P' \in E'(\Q)/2^l E'(\Q)$ 
  has order $4$. Since $E'(\Q)[2]=0$ it follows that $\rank E'(\Q) > 0$ 
  and $P' \in 2^{l-2} E'(\Q)$. 
  Therefore $\xi$ is explained (via a
  $4$-congruence) by an element of $E'(\Q)/4 E'(\Q)$.

  Since $\rank E(\Q)= 0$ and $\rank E'(\Q) >0$, it is clear that
  $E$ and $E'$ are not isogenous. 
  Computation shows that $\Gal(\Q^\sep/\Q)$ acts on $E[4]$ via a large
  enough group to ensure that
  $E$ has no CM.
  Since $E$ and $E'$ are $4$-congruent, this also shows that $E'$ 
  has no CM.
  
  Proposition~\ref{P:noprincpol} and Lemma~\ref{L:congruence_restriction}
  show that for $A$ to be principally polarized, the congruence $\sigma$ must be reverse.

However, as noted at the start of the proof,
  $S^-_{E,\xi}(4)$ does not have any rational points. Therefore the
  $4$-congruence induced by $\sigma$ is not reverse, and hence 
neither is $\sigma$ itself.
   This is the required contradiction.
\end{proof}

\begin{Remark} A curious fact about the examples in
  Table~\ref{tab:insol} is that the odd primes $p$ at which we find
  local obstructions satisfy $p\equiv 5\pmod{8}$. Indeed, for any one
  $p$, there are only finitely many $\Q_p$-isomorphism classes for the
  surface $S^-_{E,\xi}(4)$, so determining which ones have local
  obstructions is in principle a finite amount of
  work. Proposition~\ref{prop:5mod8} provides one description of an
  insolvability criterion, that appears to explain all the
  examples in Table~\ref{tab:insol} with $p$ odd. 
\end{Remark}

\begin{Proposition}
\label{prop:5mod8}
Let $p$ be a prime with $p \equiv 5 \pmod{8}$. Let $E$ be the 
elliptic curve $y^2 = x^3 + A x$ for some $A \in \Q_p^\times$ with $v_p(A)$ odd.
Let $\xi$ be the image of $P = (0,0)$ under the connecting map
\[   E(\Q_p)/ 4 E(\Q_p) \ra
 H^1(\Q_p,E[4]). \]
Then the surface $S_{E,\xi}^-(4)$ has no $\Q_p$-points.
\end{Proposition}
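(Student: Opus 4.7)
The plan is to apply Corollary~\ref{cor-rev} to produce an explicit model of $S^-_{E,\xi}(4)$ as a complete intersection of three quadrics in $\PP^5$ and then exhibit a local obstruction at $p$. With $\wA = A$ and $\wB = 0$, the discriminant $\Delta = -64A^3$ has odd $p$-adic valuation. Because $p \equiv 1 \pmod{4}$ we have $i \in \Q_p$ (so $-1$ is a square), and because $p \equiv 5 \pmod{8}$ the class of $2$ is nontrivial in $\Q_p^\times / \Q_p^{\times 2}$. The 2-torsion algebra is $L = \Q_p[\varphi]/(\varphi(\varphi^2+A)) \cong \Q_p \times K$, where $K = \Q_p(\sqrt{A}) = \Q_p(\sqrt{-A})$ is the ramified quadratic extension of $\Q_p$.

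Since $P = (0,0)$ is a 2-torsion point, its image $2\xi = \delta_2(P) \in H^1(\Q_p, E[2])$ corresponds via~\eqref{H1isom} to $(A, -\sqrt{-A}) \in L^\times/L^{\times 2}$ (the usual $f'(0)$-adjustment for the vanishing entry). From this I would read off a binary quartic $G(X,Z)$ defining the intermediate 2-cover $C$, and set $F = \Q_p[\theta]$ where $\theta$ is a root of $G(x,1)$. Because $\xi$ is obtained from the rational point $P$, we have $\ob_{E,4}(\xi) = 0$, so Proposition~\ref{P:4cov_trivob} yields an explicit $\alpha \in F^\times$ with $D_{E,\xi} = D_\alpha$ cut out by two quadrics $Q_1, Q_2 \subset \PP^3$.

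Feeding $(Q_1, Q_2)$ into Corollary~\ref{cor-rev}, I would compute $\lambda, \mu$ from Lemma~\ref{prop:int2}, choose a $\Q_p$-basis for the sextic algebra $M$, and use~\eqref{keyeqn-rev} with the shift $\nu(t) = d(t) / (t^2 - 2t\varphi - 2\varphi^2 - A)$, where $d(t) = (t^2-A)(t^4+6At^2+A^2)$ since $\wB=0$, together with the reverse-twist factor $(\theta - \tilde\theta)^2 \Delta$. Expanding in the $M$-basis produces six equations in $\Q_p(t)[X,Z,y_1,\ldots,y_6]$; eliminating $X^2, XZ, Z^2$ leaves three quadrics $R_1, R_2, R_3 \in \Q_p[y_1,\ldots,y_6]$ defining $S^-_{E,\xi}(4) \subset \PP^5$.

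The main obstacle is then showing that $\{R_1=R_2=R_3=0\}$ has no $\Q_p$-point. The hypotheses give $(2,A)_p = -1$ and $(-1,A)_p = 1$, so my expectation is that the obstruction emerges as follows: the reverse twist by $\Delta$ of odd valuation, combined with the factor of $2$ built into the defining equations of $T(4)$ and $S(4)$ via~\eqref{hessefamily} and~\eqref{sixeqns}, couples to produce a Hilbert-symbol condition that amounts to $(2,A)_p = 1$. Concretely I would normalize $(y_1:\ldots:y_6) \in \Q_p^6$ so that $\min_i v_p(y_i) = 0$, reduce the quadrics to an appropriate scheme over $\F_p$, and show that any putative $\Q_p$-point would force a norm from $K$ to $\Q_p$ to represent $2$ up to squares, which is impossible because $K/\Q_p$ is ramified and $2$ is a non-square unit. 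The main technical difficulty is the explicit bookkeeping through $L$, $F$, and $M$, and pinning down precisely which Hilbert symbol governs solvability and with what sign.
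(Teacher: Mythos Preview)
Your plan is exactly the paper's approach: apply Corollary~\ref{cor-rev} to obtain three quadrics in $\PP^5$, then exhibit a local obstruction at $p$. What is missing from your proposal is the actual computation and the resulting equations; once these are in hand, the obstruction is much more direct than the Hilbert-symbol heuristic you sketch.

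The paper carries the computation through and finds that $S^-_{E,\xi}(4)$ is given by
\begin{align*}
y_2 y_3 - A y_5 y_6 &= 0,\\
y_1 y_4 + 2 y_2 y_5 + y_3^2 - A y_6^2 &= 0,\\
(y_1^2 - 2 y_2^2) + A(y_4^2 + 2 y_5^2) + 4A y_3 y_6 &= 0.
\end{align*}
After replacing $A$ by a fourth-power multiple one may assume $v_p(A) = \pm 1$. Taking $v_p(A)=1$ and a primitive integral point, reduce mod $p$: the third equation gives $y_1^2 \equiv 2 y_2^2$, forcing $y_1 \equiv y_2 \equiv 0$ since $(2/p)=-1$; then the second gives $y_3 \equiv 0$. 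Dividing through by $p$ and reducing again, the third equation gives $y_4^2 + 2 y_5^2 \equiv 0$, forcing $y_4 \equiv y_5 \equiv 0$ since $(-2/p)=-1$; the second then gives $y_6 \equiv 0$, a contradiction. The case $v_p(A)=-1$ is symmetric.

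So your outline is sound and matches the paper, but the substance of the proof is the explicit model, which you have not produced. Your anticipated obstruction ``$(2,A)_p=-1$'' is morally right, but the actual argument uses the two Legendre-symbol conditions $(2/p)=(-2/p)=-1$ in a straightforward two-step valuation descent rather than via norm forms from $K$.
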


\begin{proof}
We use Corollary~\ref{cor-rev} to show that 
$S_{E,\xi}^-(4)$ has equations 
\begin{align*}
0 &= y_2 y_3 - A y_5 y_6 \\
0 &= y_1 y_4 + 2 y_2 y_5 + y_3^2 - A y_6^2 \\
0 &= (y_1^2 - 2 y_2^2) + A ( y_4^2 + 2 y_5^2 ) + 4A y_3 y_6 
\end{align*}
Since multiplying $A$ by a $4$th power gives the same elliptic
curve we may suppose $v_p(A) = \pm 1$. We consider the 
case $v_p(A) = 1$. Suppose $(y_1 : \ldots : y_6)$ is a 
$\Q_p$-point, with $y_1, \ldots, y_6 \in \Z_p$, not all in 
$p \Z_p$. Since $(2/p) = -1$ we have $y_1 \equiv y_2 \equiv y_3
\equiv 0 \pmod{p}$. Then since $(-2/p) = - 1$ we have
$y_4 \equiv y_5 \equiv y_6 \equiv 0 \pmod{p}$. This is the
required contradiction. The case $v_p(A) = -1$ is similar.
\end{proof}

\begin{Remark}
\label{rem:allinvis}
We also found 4 examples ($225336k1$, $271800bt1$, $329536y1$,
$368928bj1$) where for {\em every} $\xi \in H^1(\Q,E[4])$, representing
an element of $\Sha(E/\Q)$ of order $4$, 
the surface $S_{E,\xi}^-$ has no points locally at $2$,
and a further 4 examples ($271800bj1$, $352800md1$, $378400bv1$,
$378400by1$) where each $S_{E,\xi}^-(4)$ is locally insoluble either
at $2$ or $5$.
\end{Remark}

\begin{Example}
Let $E/\Q$ be the elliptic curve 225336k1 with
  Weierstrass equation
\[ y^2 = x^3 - x^2 - 453476 x - 197032572. \]
One of the elements of $\Sha(E/\Q)$ of order $4$ is represented by
$D=D_{E,\xi} \subset \PP^3$ with equations
\[\begin{aligned}
3 x_1 x_2 + 2 x_1 x_3 + 4 x_1 x_4 - 3 x_2 x_3 - 3 x_2 x_4 + 2 x_3^2 + x_3 x_4 + 3 x_4^2 & = 0 \\
4 x_1^2 + 2 x_1 x_2 + x_1 x_3 + x_1 x_4 + 4 x_2^2 + 2 x_2 x_3 + 4 x_3^2 - 2 x_4^2
& = 0 
\end{aligned}\]
In the reverse case we get a surface $S=S^-_{E,\xi}(4) \subset \PP^5$ with equations
\small
\[\begin{aligned}
    7 y_1 y_2 - 7 y_1 y_3 - y_1 y_4 + 19 y_1 y_5 + y_1 y_6 - 4 y_2^2 + y_2 y_3 + 5 y_2 y_4 - 
        11 y_2 y_5  \\ - 9 y_2 y_6 + 2 y_3^2 + 4 y_3 y_5 + 10 y_3 y_6 + 13 y_4^2 - 10 y_4 y_5 + 
        2 y_4 y_6 - 4 y_5^2 - 22 y_5 y_6 + 8 y_6^2 & = 0, \\
    4 y_1^2 - 8 y_1 y_3 - 6 y_1 y_4 + 11 y_1 y_5 - 18 y_1 y_6 - y_2^2 + 17 y_2 y_3 - 8 y_2 y_4
        - 11 y_2 y_5 - y_2 y_6 \\ - 5 y_3^2 + 17 y_3 y_4 + 3 y_3 y_5 + 18 y_3 y_6 - 15 y_4^2 + 
        4 y_4 y_5 - 21 y_4 y_6 - 12 y_5^2 - 13 y_5 y_6 + 3 y_6^2 & = 0, \\
    3 y_1^2 + 4 y_1 y_2 + 16 y_1 y_3 + 4 y_1 y_4 - 11 y_1 y_5 + 6 y_1 y_6 - 11 y_2^2 + 
        3 y_2 y_3 - 4 y_2 y_4 + 15 y_2 y_5 \\ + y_2 y_6 + 10 y_3^2 + 19 y_3 y_4 + 7 y_3 y_5 + 
        28 y_3 y_6 + 2 y_4^2 - 23 y_4 y_6 - y_5^2 + 31 y_5 y_6 + 20 y_6^2 & = 0.
\end{aligned}\]
\normalsize

As a check on our calculations we verified that the flex points on $D$
and the singular points on $S$ are defined over the same degree $16$
number field. 

We find that $S(\Q_2) = \emptyset$. As indicated in Remark~\ref{rem:allinvis}, 
exactly 
the same happens for the other elements of order $4$ in $\Sha(E/\Q)$. 
The argument in Proposition~\ref{P:invis} now shows that none of the elements of $\Sha(E/\Q)$ of order $4$ are
visible in a principally polarized abelian surface.
\end{Example}


\begin{bibdiv}

\begin{biblist}
\bib{BH}{article}{
  author={Barth, Wolf},
  author={Hulek, Klaus},
  title={Projective models of Shioda modular surfaces},
  journal={Manuscripta Math.},
  volume={50},
  date={1985},
  pages={73--132},
  issn={0025-2611},
}

\bib{magma}{article}{
  title={The Magma algebra system. I. The user language},
  author={Bosma, The MAGMA computer algebra system is described in Wieb},
  author={Cannon, John},
  author={Playoust, Catherine},
  journal={J. Symbolic Comput.},
  volume={24},
  number={3--4},
  pages={235--265},
  date={1997},
}

\bib{BD}{article}{
  author={Bruin, Nils},
  author={Doerksen, Kevin},
  title={The arithmetic of genus two curves with $(4,4)$-split Jacobians},
  journal={Canad. J. Math.},
  volume={63},
  date={2011},
  number={5},
  pages={992--1024},
  issn={0008-414X},
}

\bib{bruin:sha2}{article}{
  author={Bruin, Nils},
  title={Visualising $\text {Sha[2]}$ in abelian surfaces},
  journal={Math. Comp.},
  volume={73},
  date={2004},
  number={247},
  pages={1459--1476 (electronic)},
  issn={0025-5718},
}

\bib{BruinDahmen2010}{article}{
  author={Bruin, Nils},
  author={Dahmen, Sander R.},
  title={Visualizing elements of Sha[3] in genus 2 Jacobians},
  conference={ title={Algorithmic number theory}, },
  book={ series={Lecture Notes in Comput. Sci.}, volume={6197}, publisher={Springer, Berlin}, },
  date={2010},
  pages={110--125},
}

\bib{CremonaBQ}{article}{
  author={Cremona, J. E.},
  title={Classical invariants and 2-descent on elliptic curves},
  journal={J. Symbolic Comput.},
  volume={31},
  date={2001},
  number={1--2},
  pages={71--87.},
}

\bib{descI}{article}{
  author={Cremona, J. E.},
  author={Fisher, T. A.},
  author={O'Neil, C.},
  author={Simon, D.},
  author={Stoll, M.},
  title={Explicit $n$-descent on elliptic curves. I. Algebra},
  journal={J. Reine Angew. Math.},
  volume={615},
  date={2008},
  pages={121--155},
  issn={0075-4102},
}

\bib{minred234}{article}{
  author={Cremona, John E.},
  author={Fisher, Tom A.},
  author={Stoll, Michael},
  title={Minimisation and reduction of 2-, 3- and 4-coverings of elliptic curves},
  journal={Algebra Number Theory},
  volume={4},
  date={2010},
  number={6},
  pages={763--820},
  issn={1937-0652},
}

\bib{CM}{article}{
  author={Cremona, John E.},
  author={Mazur, Barry},
  title={Visualizing elements in the Shafarevich-Tate group},
  journal={Experiment. Math.},
  volume={9},
  date={2000},
  number={1},
  pages={13--28},
  issn={1058-6458},
}

\bib{ANTS:4desc}{article}{
  author={Fisher, Tom},
  title={Some improvements to 4-descent on an elliptic curve},
  conference={ title={Algorithmic number theory}, },
  book={ series={Lecture Notes in Comput. Sci.}, volume={5011}, publisher={Springer, Berlin}, },
  date={2008},
  pages={125--138},
}

\bib{g1hessians}{article}{
  author={Fisher, Tom},
  title={The Hessian of a genus one curve},
  journal={Proc. Lond. Math. Soc. (3)},
  volume={104},
  date={2012},
  number={3},
  pages={613--648},
  issn={0024-6115},
}

\bib{enqI}{article}{
  author={Fisher, Tom},
  title={Invariant theory for the elliptic normal quintic, I. Twists of X(5)},
  journal={Math. Ann.},
  volume={356},
  date={2013},
  number={2},
  pages={589--616},
  issn={0025-5831},
}

\bib{fisher:invis}{article}{
  author={Fisher, Tom},
  title={Invisibility of Tate-Shafarevich groups in abelian surfaces},
  journal={Int. Math. Res. Not. IMRN},
  date={2014},
  number={15},
  pages={4085--4099},
  issn={1073-7928},
}

\bib{Klenke2005}{article}{
  author={Klenke, Tomas Antonius},
  title={Visualizing elements of order two in the Weil-Ch\^atelet group},
  journal={J. Number Theory},
  volume={110},
  date={2005},
  number={2},
  pages={387--395},
  issn={0022-314X},
}

\bib{Mazur-cubics}{article}{
  author={Mazur, B.},
  title={Visualizing elements of order three in the Shafarevich-Tate group},
  note={Sir Michael Atiyah: a great mathematician of the twentieth century},
  journal={Asian J. Math.},
  volume={3},
  date={1999},
  number={1},
  pages={221--232},
  issn={1093-6106},
}

\bib{MSS}{article}{
  author={Merriman, J. R.},
  author={Siksek, S.},
  author={Smart, N. P.},
  title={Explicit $4$-descents on an elliptic curve},
  journal={Acta Arith.},
  volume={77},
  date={1996},
  number={4},
  pages={385--404},
  issn={0065-1036},
}

\bib{milne:abvar}{article}{
  author={Milne, J. S.},
  title={Abelian varieties},
  conference={ title={Arithmetic geometry}, address={Storrs, Conn.}, date={1984}, },
  book={ publisher={Springer}, place={New York}, },
  date={1986},
  pages={103--150},
}

\bib{Silverberg}{article}{
  author={Silverberg, Alice},
  title={Explicit families of elliptic curves with prescribed mod $N$ representations},
  conference={ title={Modular forms and Fermat's last theorem}, address={Boston, MA}, date={1995}, },
  book={ publisher={Springer, New York}, },
  date={1997},
  pages={447--461},
}

\bib{Stamminger}{thesis}{
  author={Stamminger, Sebastian},
  title={Explicit 8-descent on elliptic curves},
  organization={International University Bremen},
  date={2005},
  note={(PhD thesis)},
  eprint={http://www.jacobs-university.de/research/dissertations/},
}

\bib{Womack}{article}{
  author={Womack, T.},
  title={Explicit descent on elliptic curves},
  organization={University of Nottingham},
  date={2003},
  note={(PhD thesis)},
}

\end{biblist}
\end{bibdiv}

\end{document}